\def\RR{{\mathbb{R}}}
\def\ZZ{{\mathbb{Z}}}
\def\QQ{{\mathbb{Q}}}
\def\KK{{\mathbb{K}}}
\numberwithin{equation}{section}
\newcommand{\rank}{\mathop{\rm rank} }
\newcommand{\diam}{\mathop{\rm diam} }
\newcommand{\argmin}{\mathop{\rm argmin} }
\newcommand{\Det}{\mathop{\rm Det} }
\newcommand{\ncrank}{\mathop{\rm nc\mbox{-}rank} }
\newtheorem{Thm}{Theorem}[section]
\newtheorem{Prop}[Thm]{Proposition}
\newtheorem{Lem}[Thm]{Lemma}
\newtheorem{Cor}[Thm]{Corollary}
\theoremstyle{definition}
\newtheorem{Rem}[Thm]{Remark}
\title{Computing the nc-rank via discrete convex optimization on CAT(0) spaces}
\author{Masaki HAMADA and Hiroshi HIRAI \\
Department of Mathematical Informatics, \\
Graduate School of Information Science and Technology,   \\
The University of Tokyo, Tokyo, 113-8656, Japan.\\
\texttt{\normalsize masaki$\_$hamada@mist.i.u-tokyo.ac.jp}\\
\texttt{\normalsize hirai@mist.i.u-tokyo.ac.jp}
}
\begin{document}
\maketitle
\begin{abstract}
In this paper, we address the noncommutative rank (nc-rank) computation of
a linear symbolic matrix
\[
A = A_1 x_1 + A_2 x_2 + \cdots + A_m x_m,
\]
where each $A_i$ is an $n \times n$ matrix 
over a field $\KK$, and $x_i$ $(i=1,2,\ldots,m)$ are noncommutative variables.
For this problem, polynomial time algorithms were given 
by Garg, Gurvits, Oliveira, and Wigderson for $\KK = \QQ$, and by Ivanyos, Qiao, and Subrahmanyam for an arbitrary field $\KK$.
We present a significantly different polynomial time algorithm 
that works on an arbitrary field $\KK$.
Our algorithm is based on a combination of submodular optimization on modular lattices and 
convex optimization on CAT(0) spaces.
\end{abstract}
\noindent
Keywords: Edmonds' problem, noncommutative rank, CAT(0) space, proximal point algorithm, submodular function, modular lattice, $p$-adic valuation, Euclidean building

%MSC classes: 68Q25, 90C25, 06C20.

\section{Introduction}
The present article addresses rank computation 
of a {\em linear symbolic matrix}---
a matrix of the following form: 
\begin{equation}\label{eqn:A}
A = A_1 x_1 + A_2 x_2 + \cdots + A_m x_m,
\end{equation}
where each $A_i$ is an $n \times n$ matrix 
over a field $\KK$, $x_i$ $(i=1,2,\ldots,m)$ are variables, 
and $A$ is viewed as a matrix over $\KK(x_1,x_2,\ldots,x_m)$. 
This problem, sometimes called {\em Edmonds' problem},  
has fundamental importance in a wide range 
of applied mathematics and computer science; see~\cite{Lovasz89}. 
Edmonds' problem (on large field $\KK$) is a representative problem
that belongs to RP---the class of problems having a randomized polynomial time algorithm---but is not known to belong to P.
The existence of a deterministic polynomial time algorithm 
for Edmonds' problem is one of the major open problems in theoretical computer science.

In 2015, 
Ivanyos, Qiao, and Subrahmanyam~\cite{IQS15a} 
introduced a noncommutative formulation of the Edmonds' problem, called 
the {\em noncommutative Edmonds' problem}.
In this formulation, linear symbolic matrix $A$ is regarded as 
a matrix over the {\em free skew field} $\KK(\langle x_1,\ldots,x_m \rangle)$, which is the ``most generic" skew field of fractions of noncommutative polynomial ring 
$\KK \langle x_1,\ldots,x_m \rangle$. 
The rank of $A$ over the free skew field is called 
the {\em noncommutative rank}, or {\em nc-rank}, which is denoted by $\ncrank A$.
Contrary to the commutative case, 
the noncommutative Edmonds' problem 
can be solved in polynomial time.
\begin{Thm}[\cite{GGOW15,IQS15b}]
	The nc-rank of a matrix $A$ of form (\ref{eqn:A}) 
	can be computed in polynomial time.
\end{Thm}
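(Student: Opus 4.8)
The plan is to move from the algebraic rank over the free skew field to a purely combinatorial min--max formula, and then to solve the resulting optimization problem by convex-optimization techniques on a suitable CAT(0) space.

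First I would invoke the combinatorial characterization of the nc-rank due to Cohn and Fortin--Reutenauer. Writing $\mathcal{A} := \{\sum_{i} c_i A_i : c \in \KK^m\} \subseteq \KK^{n\times n}$ for the matrix space spanned by the $A_i$, and $\mathcal{A}(U) := \mathrm{span}\{Bu : B \in \mathcal{A},\ u \in U\}$ for a subspace $U \subseteq \KK^n$, one has
\begin{equation*}
\ncrank A \;=\; \min_{U \le \KK^n} \bigl(\, n - \dim U + \dim \mathcal{A}(U) \,\bigr).
\end{equation*}
The decisive structural fact is that $U \mapsto \dim \mathcal{A}(U)$ is a \emph{submodular} function on the lattice $L = \mathrm{Sub}(\KK^n)$ of linear subspaces: from $\mathcal{A}(U+U') = \mathcal{A}(U)+\mathcal{A}(U')$ and $\mathcal{A}(U\cap U') \subseteq \mathcal{A}(U)\cap\mathcal{A}(U')$, submodularity of $\dim\mathcal A(\cdot)$ drops out of the modular law $\dim(X+Y)+\dim(X\cap Y)=\dim X+\dim Y$; and $\dim(\cdot)$ itself is the modular rank function of $L$. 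So computing $\ncrank A$ reduces to \emph{minimizing a submodular function on the modular lattice $\mathrm{Sub}(\KK^n)$}, the evaluation oracle being the elementary linear-algebra computation of $\dim \mathcal{A}(U)$.

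The obstacle is that $\mathrm{Sub}(\KK^n)$ has exponential (or infinite) size, so combinatorial submodular-minimization machinery does not apply off the shelf. Here I would exploit the geometry of $L$: after introducing a valuation (pass to the field of formal Laurent series $\KK(t)$, or a $p$-adic field, and represent subspaces via homothety classes of lattices), the order complex of $\mathrm{Sub}(\KK^n)$ embeds into the Euclidean building of type $A_{n-1}$, which is a CAT(0) space. The submodular objective extends to a function on this building that is convex along geodesics, and one can then run a proximal-point / steepest-descent iteration on the building: each iteration solves a \emph{local} subproblem that lives on a polynomial-size distributive sublattice arising from a single apartment -- hence an ordinary tractable submodular minimization -- and moves to its optimizer. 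Geodesic convexity guarantees that the appropriate kind of local optimum is global, a potential/diameter argument bounds the number of iterations by a polynomial, and a final rounding step extracts an exact minimizing subspace and thus the integer $\ncrank A$.

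I expect the genuine difficulty to lie in this algorithmic core rather than in the min--max formula, whose submodularity is essentially the modular law as sketched above. The delicate points are: (i) setting up the building / CAT(0) framework over an \emph{arbitrary} field $\KK$ and proving that the natural extension of $\dim\mathcal{A}(\cdot)$ is geodesically convex; (ii) establishing a quantitative progress lemma for the proximal-point iteration and bounding the iteration count, which requires controlling how far an optimal flag can be from the starting flag in the building metric; and (iii) implementing each iteration in true polynomial time -- identifying the local subproblem as a tractable submodular minimization and, in the $\KK=\QQ$ case, keeping the bit-size of all intermediate lattices polynomial. Gurvits-style operator scaling (bounding the capacity of the completely positive map attached to $\mathcal{A}$) gives an alternative route when $\KK=\QQ$, but it does not transfer to positive characteristic, which is precisely the gap the lattice/CAT(0) approach is designed to close.
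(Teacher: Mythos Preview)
Your high-level strategy---reduce nc-rank to submodular minimization on the subspace lattice, build a CAT(0) convex relaxation, run a proximal-point method---is exactly the paper's. But you misidentify the CAT(0) space, and this is a genuine gap. You propose passing to a valued field and working in the Euclidean (affine) building for $GL_n$. The paper does not do this for the main algorithm: it works directly in the \emph{orthoscheme complex} $K(\mathcal{L}\times\mathcal{M})$ of the product of two copies of $\mathrm{Sub}(\KK^n)$ (one with the order reversed), a compact CAT(0) space of diameter $\sqrt{2n}$; the Lov\'asz extension of the submodular objective to this complex is geodesically convex by a general theorem for modular lattices, and \emph{that} is the relaxation. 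No valuation is introduced. The Euclidean building does appear in the paper, but in a separate section and for a different purpose: reducing the $\KK=\QQ$ case to $GF(p)$ via the $p$-adic valuation, precisely to sidestep the bit-complexity issue you flag in~(iii). Your sentence ``the order complex of $\mathrm{Sub}(\KK^n)$ embeds into the Euclidean building'' conflates the spherical and affine pictures; the order complex with the orthoscheme metric is already the right CAT(0) space.

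The second discrepancy is the per-iteration subproblem. You describe it as submodular minimization on a distributive sublattice coming from one apartment. The paper's mechanism is more specific: it splits the objective as $\sum_i f_i$ with one summand $(2n{+}1)\overline{R_{A_i}}$ per bilinear form $A_i$ (plus a strongly-convex perturbation to force a unique minimizer), and runs the \emph{splitting} proximal point algorithm of Ba\v{c}\'ak with the Ohta--P\'alfia sublinear rate. The resolvent of each $f_i$ is computed by constructing an \emph{orthogonal frame} adapted simultaneously to $A_i$ and to the chains supporting the current point; in the $[0,1]^{2n}$-coordinates of that frame the resolvent becomes a separable quadratic solved coordinate-wise in closed form. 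Proving that the resolvent stays inside that frame, and that such a frame can be built by Gaussian elimination, is the technical core; ``ordinary tractable submodular minimization'' does not cover it. A minor point: the paper keeps both subspaces $(X,Y)$ in the MVSP formulation rather than eliminating one via $Y=\mathcal{A}(U)^{\perp}$ as you do---your single-lattice form is equivalent, but the product form is what makes the per-$A_i$ splitting clean.
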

As well as the result, the algorithms for nc-rank are stimulating
subsequent researches. 
The first polynomial time algorithm is due to 
Garg, Gurvits, Oliveira, and Wigderson~\cite{GGOW15} for the case of 
$\KK = \QQ$.
They showed that Gurvits' {\em operator scaling algorithm}~\cite{Gurvits04}, 
which was designed for solving a special class ({\em Edmonds-Rado class}) of Edmonds' problem, can solve nc-singularity testing (i.e., testing whether $n = \ncrank A$)
in polynomial time. 
The operator scaling algorithm has rich connections 
to various fields of mathematical sciences.
Particularly, nc-singularity testing can 
be formulated as a 
geodesically-convex optimization problem on Riemannian manifold $GL_n(\RR)/O_n(\RR)$, 
and the operator scaling can be viewed as 
a minimization algorithm on it; see \cite{AGLOW}.
For explosive developments after \cite{GGOW15},
we refer to e.g., \cite{BFGOWW} and the references therein. 

Ivanyos, Qiao, and Subrahmanyam~\cite{IQS15a,IQS15b} 
developed the first polynomial time algorithm for the nc-rank 
that works on an arbitrary field $\KK$. 
Their algorithm is viewed as a ``vector-space 
generalization" of the augmenting path algorithm 
in the bipartite matching problem.
This indicates a new direction 
in combinatorial optimization, since 
Edmonds' problem generalizes 
several important combinatorial optimization problems.
Inspired by their algorithm, 
\cite{HI_2x2} developed a combinatorial polynomial time algorithm
for a certain algebraically constraint 2-matching problem in a bipartite graph, which corresponds to the (commutative) Edmonds' problem for a linear symbolic matrix in~\cite{IwataMurota95}.
Also,  
a noncommutative algebraic formulation that 
captures weighted versions
of combinatorial optimization problems
was studied in \cite{HH_degdet,HI_degdet,Oki}.

The main contribution of this paper is 
a significantly different polynomial time algorithm 
for computing the nc-rank on an arbitrary field $\KK$. 
While describing 
the above algorithms and validity proofs is rather tough work,  
the algorithm and proof presented in this paper 
are conceptually simple, elementary, 
and relatively short. 
Further, it is also relevant to
the following two cutting edge issues in 
discrete and continuous optimization:
\begin{itemize}
	\item submodular optimization on a modular lattice.
	\item convex optimization on a CAT(0) space.
\end{itemize}

 A {\em submodular function} $f$ on a lattice ${\cal L}$
is a function $f:{\cal L} \to \RR$ 
satisfying $f(p) + f(q) \geq f(p \vee q) + f(p \wedge q)$ for $p,q \in {\cal L}$.
Submodular functions on Boolean lattice $\{0,1\}^n$ are well-studied, and
have played central roles in the developments of combinatorial optimization; see \cite{FujiBook}.
They are correspondents of convex functions 
({\em discrete convex functions}) 
in discrete optimization; see \cite{MurotaBook}.
Optimization of submodular functions beyond Boolean lattices, particularly on modular lattices, 
is a new research area that has just started; 
see \cite{FKMTT14,HH16L-convex,Kuivinen11} on this subject.

A {\em CAT(0) space} is a (non-manifold) generalization 
of nonpositively curved Riemannian manifolds; see~\cite{BrHa}. 
While CAT(0) spaces have been studied mainly in geometric group theory,
their effective utilization in applied mathematics 
has gained attention; see e.g.,\cite{BHV01}.
A CAT(0) space is a uniquely-geodesic metric space, 
and convexity concepts are defined along unique geodesics.
Theory of algorithms and optimization on CAT(0) spaces is now being pioneered; see e.g., ~\cite{Bacak13,Bacak14,BacakBook,Hayashi,Owen11}.

Our algorithm is obtained as a combination 
of these new optimization approaches.
We hope that this will bring new interactions to the nc-rank literature.
While it is somehow relevant to 
geodesically-convex optimization mentioned above,
we deal with optimization on 
combinatorially-defined non-manifold CAT(0) spaces.
The most important implication of our result is that   
convex optimization algorithms on such spaces can be a tool of showing polynomial time complexity.

\paragraph{Outline.} Let us outline our algorithm.
As shown by Fortin and Reutenauer~\cite{FortinReutenauer04}, 
the nc-rank is given by the optimum value of
an optimization problem:
\begin{Thm}[\cite{FortinReutenauer04}]\label{thm:FR}
	Let $A$ be a matrix of form~{\rm (\ref{eqn:A})}.
	Then $\ncrank A$ is equal to the optimal value of the following problem:
	\begin{eqnarray*}
		{\rm FR}: \quad {\rm Min.} && 2n - r - s  \\
		{\rm s.t.} && \mbox{$S A T$ has an $r \times s$ zero submatrix,} \\
		&& S, T \in GL_n(\KK).
	\end{eqnarray*}
\end{Thm}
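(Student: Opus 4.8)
The plan is to prove the two inequalities $\ncrank A\le\opt(\mathrm{FR})$ and $\ncrank A\ge\opt(\mathrm{FR})$ separately; the first is an elementary block-matrix estimate while the second carries all of the content. For $\ncrank A\le\opt(\mathrm{FR})$, start from a feasible pair $S,T\in GL_n(\KK)$ such that $SAT$ has an $r\times s$ zero submatrix, and absorb suitable permutation matrices into $S$ and $T$ so that this block occupies the last $r$ rows and the first $s$ columns, i.e.\ $SAT=\begin{pmatrix}B&C\\O&D\end{pmatrix}$ with $O$ the $r\times s$ zero block and $B,C,D$ matrices over the free skew field $\KK(\langle x\rangle)$. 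Multiplication on either side by a matrix in $GL_n(\KK)$ does not change the nc-rank, so $\ncrank A=\ncrank(SAT)$; and over the division ring $\KK(\langle x\rangle)$ the rank of a matrix equals the dimension of its row space. The row space of $SAT$ is spanned by its $n-r$ top rows together with its $r$ bottom rows, the latter lying in the coordinate subspace where the first $s$ coordinates vanish; hence it has dimension at most $(n-r)+(n-s)=2n-r-s$. Minimising over feasible data gives $\ncrank A\le\opt(\mathrm{FR})$.

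For the reverse inequality I would first recast the feasibility side of $\mathrm{FR}$ in subspace language, the viewpoint underlying \cite{IQS15a,IQS15b}. Write $\mathcal{A}V:=A_1V+\cdots+A_mV$ for a subspace $V\subseteq\KK^n$. Reading off the row span of the relevant rows of $S$ and the column span of the relevant columns of $T$ shows that $SAT$ has an $r\times s$ zero submatrix for some $S,T\in GL_n(\KK)$ precisely when there is a subspace $V$ with $\dim V=s$ and $\dim\mathcal{A}V\le n-r$; optimising $r$ for a fixed $V$ then yields
\[
\opt(\mathrm{FR})=n-\max_{V\subseteq\KK^n}\bigl(\dim V-\dim\mathcal{A}V\bigr).
\]
Thus it remains to produce a \emph{shrunk subspace} $V$ with $\dim V-\dim\mathcal{A}V\ge n-\ncrank A$. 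This is where the theory of the free skew field enters. By Cohn's results, $\KK(\langle x\rangle)$ is the universal field of fractions of the free algebra $\KK\langle x_1,\ldots,x_m\rangle$, so $\ncrank A$ equals the inner rank of $A$ over that algebra, i.e.\ the least $\rho$ admitting a factorisation $A=PQ$ with $P$ of size $n\times\rho$ and $Q$ of size $\rho\times n$ over $\KK\langle x\rangle$. When $\rho<n$, a grading/degree analysis of a minimal such factorisation --- exploiting that every entry of $A$ is homogeneous of degree one --- allows $P$ and $Q$ to be chosen so that, after transforming $A$ by constant invertible matrices on both sides, $A$ acquires a zero block; tracking dimensions produces a shrunk subspace of deficiency at least $n-\rho$. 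A standard reduction --- applying the singular case to $(\rho+1)\times(\rho+1)$ submatrices of $A$, using that the nc-rank of a submatrix is at most $\ncrank A$, and combining the resulting local zero blocks, where the supermodularity of $V\mapsto\dim V-\dim\mathcal{A}V$ on the lattice of subspaces is convenient --- then upgrades this to the exact identity for arbitrary $\rho=\ncrank A$ using only linear algebra.

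The hard part, and the only genuinely non-elementary ingredient, is the implication ``$A$ is not full over the free skew field $\Rightarrow$ $A$ has a large zero block realisable by \emph{constant} row and column operations.'' The delicate point is entirely about the ground field: Cohn's factorisation theorem over the free algebra a priori only guarantees transformation matrices with polynomial entries, and it is precisely the linearity (degree-one homogeneity) of $A$ that forces them down to $GL_n(\KK)$. This is the step on which I would expect to spend the real effort, and for which, in a self-contained treatment, I would follow \cite{FortinReutenauer04}; the rest of the argument is bookkeeping with subspaces together with the block estimate of the first paragraph.
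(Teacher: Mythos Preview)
The paper does not give its own proof of Theorem~\ref{thm:FR}; it is quoted from Fortin--Reutenauer~\cite{FortinReutenauer04} as an established result and then used as the starting point for the algorithm. So there is nothing in the paper to compare your argument against.

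As for the content of your sketch: the inequality $\ncrank A\le\opt(\mathrm{FR})$ is handled correctly by the block estimate. For the reverse inequality, your identification of the key ingredient---Cohn's theorem that $\ncrank A$ equals the inner rank of $A$ over $\KK\langle x\rangle$, followed by a degree/homogeneity argument on a minimal factorisation $A=PQ$ to force the transforming matrices down to $GL_n(\KK)$---is exactly the Fortin--Reutenauer line. However, the proposed detour through $(\rho+1)\times(\rho+1)$ submatrices is both unnecessary and not clearly sound. In~\cite{FortinReutenauer04} the case of general $\rho=\ncrank A$ is treated \emph{directly}: from a factorisation $A=PQ$ with $P\in\KK\langle x\rangle^{n\times\rho}$ and $Q\in\KK\langle x\rangle^{\rho\times n}$, the linearity of $A$ lets one normalise $P$ and $Q$ by constant invertible matrices so that a zero block of the required total size $r+s\ge 2n-\rho$ appears in one step. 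By contrast, your submatrix patching would need to combine many local shrunk subspaces (each of deficiency only~$1$, living in different $(\rho+1)$-dimensional coordinate slices) into a single global one of deficiency $n-\rho$; supermodularity of $V\mapsto\dim V-\dim\mathcal{A}V$ alone does not do this, since taking joins can just as well decrease the deficiency. I would drop that paragraph and run the degree argument once on the full $n\times\rho$ and $\rho\times n$ factors.
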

As in~\cite{IQS15a,IQS15b}, our algorithm 
is designed to solve this optimization problem. 
This problem FR can also be formulated as an optimization problem on the modular lattice of vector subspaces in $\KK^n$,  as follows. 
Regard each matrix $A_i$ as a bilinear form $\KK^n \times \KK^n \to \KK$ by
\[
A_i(x,y) := x^{\top} A_i y \quad (x,y \in \KK^n).
\]
Then the condition of FR says that 
there is a pair of vector subspaces $U$ and $V$ of dimension $r$ and $s$, respectively, that
annihilates all bilinear forms, i.e., $A_i(U,V) := \{0\}$.
The objective function is written as $2n - \dim U - \dim V$.
Therefore, FR is equivalent to the following problem 
({\em maximum vanishing subspace problem; MVSP}):
\begin{eqnarray*}
	{\rm MVSP}: \quad
	{\rm Min.} &&  
	- \dim X - \dim Y \\
	{\rm s.t.} &&   A_i(X,Y) =\{0\} \quad (i=1,2,\ldots,m),  \\
	&& X,Y: \mbox{vector subspaces of $\KK^n$}.
\end{eqnarray*}
It is a basic fact that
the family ${\cal L}$ of all vector subspaces in $\mathbb{K}^n$
forms a modular lattice with respect to 
the inclusion order.
Hence, MVSP is an optimization problem over ${\cal L} \times {\cal L}$. 
Further, by reversing the order of the second ${\cal L}$, 
it can be viewed as a {\em submodular function minimization (SFM)} on modular lattice ${\cal L} \times {\cal L}$;
see Proposition~\ref{prop:submodular} in Section~\ref{subsec:submodular}.

Contrary to the Boolean case, 
it is not known generally whether a submodular function 
on a modular lattice can be minimized in polynomial time.
The reason of polynomial-time solvability of SFM on Boolean lattice $\{0,1\}^n$ 
is the {\em Lov\'asz extension}~\cite{Lovasz83}---
a piecewise-linear interpolation $\bar f: [0,1]^n \to \RR$ of function $f:\{0,1\}^n \to \RR$ such that 
$\bar f$ is convex if and only if $f$ is submodular.  
For SFM on a modular lattice, however, 
such a good convex relaxation to $\mathbb{R}^n$ is not known.

A recent study~\cite{HH16L-convex} introduced
an approach of constructing 
a convex relaxation of SFM on a modular lattice, 
where the domain of the relaxation is a CAT(0) space.
The construction is based on the concept of an {\em orthoscheme complex}~\cite{BM10}. 
Consider the order complex $K({\cal L})$ of ${\cal L}$,
and endow each simplex with a specific Euclidean metric.
The resulting metric space $K({\cal L})$ is called
the orthoscheme complex of ${\cal L}$, and 
is dealt with as a continuous relaxation of ${\cal L}$. 
The details are given in Section~\ref{subsub:K(L)}.
Figure~\ref{fig:folder} illustrates
the orthoscheme complex of a modular lattice with rank $2$, which is obtained by gluing Euclidean 
isosceles right triangles 
along longer edges.
	\begin{figure}[t]
	\begin{center}
		\includegraphics[scale=0.4]{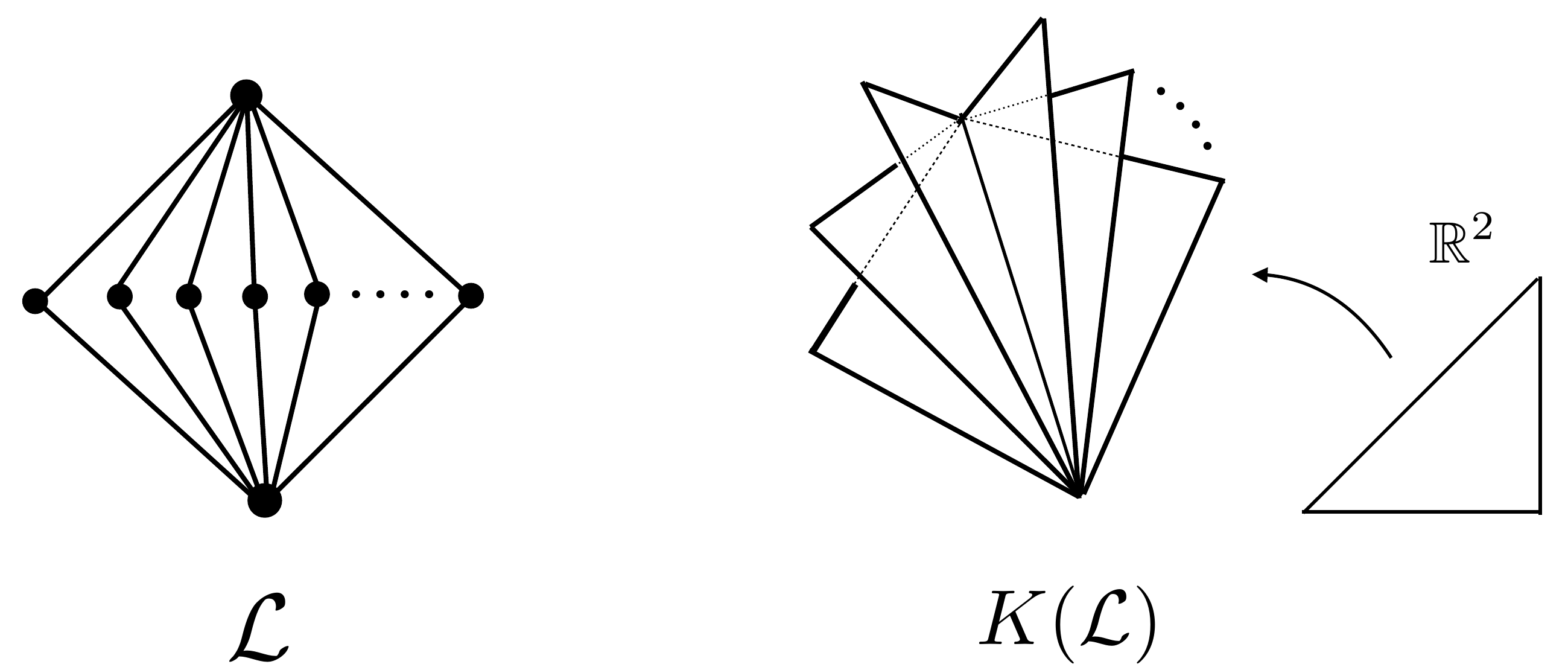}
		\caption{An orthoscheme complex}
		\label{fig:folder}
	\end{center}
\end{figure}\noindent
The orthoscheme complex of a modular lattice 
was shown to be CAT(0)~\cite{CCHO}. 
This enables us to consider geodesically-convexity 
for functions on $K({\cal L})$.
In this setting, a submodular function $f:{\cal L} \to \RR$ 
is characterized by the convexity of 
its piecewise linear interpolation, i.e., Lov\'asz extension 
$\bar f:K({\cal L}) \to \RR$~\cite{HH16L-convex}.
According to this construction, we obtain 
an exact convex relaxation of MVSP in a CAT(0)-space.

Our proposed algorithm is obtained by applying  
the {\em splitting proximal point algorithm (SPPA)} to 
this convex relaxation.
SPPA is a generic algorithm that minimizes a convex function of a separable form 
$\sum_{i=1}^N f_i$, 
where each $f_i$ is a convex function. 
Each iteration of the algorithm updates 
the current point $x$ to its {\em resolvent} of $f_i$---
a minimizer of 
$y \mapsto f_i(y) + (1/\lambda)d(y,x)^2$, where $i$ is chosen cyclically.
Ba\v{c}\'ak~\cite{Bacak14} showed that 
SPPA generates a sequence convergent to a minimizer of $f$
(under a mild assumption).
Subsequently, Ohta and P\'alfia~\cite{OhtaPalfia15} 
proved a sublinear convergence of SPPA.

The main technical contribution is to show that 
SPPA is applicable to the convex relaxation of MVSP 
and becomes a polynomial time algorithm for MVSP: 
We provide an equivalent convex relaxation of MVSP with
a separable objective function $\sum_i f_i$, and show that 
the resolvent of each $f_i$ can be computed in polynomial time. 
By utilizing the sublinear convergence estimate, 
a polynomial number of iterations for SPPA
identifies an optimal solution of MVSP.

Compared with the existing algorithms, 
this algorithm has advantages and drawbacks.  
As mentioned above, 
our algorithm and its validity proof are relatively simple. 
Particularly, it can be uniformly written for an arbitrary field $\KK$, where
only the requirement for $\KK$ is that arithmetic operations is executable. 
No care is needed for a small finite field, 
whereas the algorithm in \cite{IQS15a,IQS15b} needs a field extension.
On the other hand, our algorithm is very slow; see Theorem~\ref{thm:prox}.
This is caused by using a generic and primitive algorithm (SPPA) 
for optimization on CAT(0) spaces. 
We believe that this will be naturally improved in future developments. 

The problematic point of our algorithm 
is bit-complexity explosion for the case of $\KK = \QQ$.
Our algorithm updates feasible vector subspaces in MVSP, 
and can cause an exponential increase of the bit-size 
representing bases of those vector subspaces.
To resolve this problem and
make use of the advantage in finite fields, 
we propose a reduction of nc-rank computation on $\QQ$ 
to that on $GF(p)$.
This reduction is an application of the $p$-adic valuation 
on $\QQ$.
We consider a weighted version of the nc-rank, which 
was introduced by~\cite{HH_degdet} for $\KK(t)$
 and is definable 
for an arbitrary field with a discrete valuation.
The corresponding optimization problem MVMP is  
a discrete convex optimization on a representative CAT(0) space---the {\em Euclidean building} for $GL_n(\QQ)$ (or $GL_n(\QQ_p)$). 
This may be viewed as a 
$p$-adic counterpart of the above geodesically-convex optimization approach on $GL_n(\RR)/O_n(\RR)$ 
for nc-singularity testing on $\QQ$. 
By using an obvious relation of the $p$-adic valuation of 
a nonzero integer and its bit-length in base $p$,  
we show that nc-singularity testing on $\QQ$ reduces to a polynomial number of nc-rank computation over the residue field $GF(p)$,  
in which the required bit-length is polynomially bounded.

\paragraph{Organization.}
The rest of this paper is organized as follows.
In Section~\ref{sec:pre}, 
we present necessary backgrounds on convex optimization on CAT(0) space,
modular lattices, and submodular functions. 
In Section~\ref{sec:algorithm}, 
we present our algorithm and show its validity.
In Section~\ref{sec:p-adic}, 
we present the $p$-adic reduction for
nc-rank computation on $\QQ$.

\paragraph{Original motivation: 
Block triangularization of a partitioned matrix.}
%In the end of introduction, 
%let us mention the origin of this work. 
The original version~\cite{HamadaHirai} of this paper  dealt with block triangularization of a matrix with 
the following partition structure:
 \[
 A = \left(
 \begin{array}{ccccc}
 A_{11} & A_{12} &\cdots & A_{1\nu} \\
 A_{21} & A_{22} &\cdots & A_{2\nu} \\
 \vdots & \vdots & \ddots & \vdots \\
 A_{\mu1}&A_{\mu2} &\cdots & A_{\mu \nu}
 \end{array}\right),
 \]
 where $A_{\alpha \beta}$ is an $n_\alpha \times m_\beta$ 
 matrix over field $\KK$ for $\alpha \in [\mu]$ and
 $\beta\in [\nu]$. 
 Consider the following block triangularization 
 \[
 A\mapsto PEAFQ
 =\left[\begin{array}{ccccc}
 \multicolumn{1}{|c}{} & & &  \\\cline{1-1}
 & \multicolumn{1}{|c}{} && \text{\huge $*$}\\\cline{2-3}
 & & & \multicolumn{1}{|c}{} &\\\cline{4-4}
 \text{\huge $O$} & & & & \multicolumn{1}{|c}{} \\\cline{5-5}
 \end{array}\right],
 \]
 where $P$ and $Q$ are permutation matrices and 
 $E$ and $F$ are regular transformations ``within blocks," 
 i.e., $E$ and $F$ are block diagonal 
 matrices with block diagonals 
 $E_\alpha \in GL_{n_{\alpha}}(\KK)$ $(\alpha \in [\mu])$ and $F_\beta \in GL_{m_{\beta}}(\KK)$ 
 $(\beta \in [\mu])$, respectively.
 Such a block triangularization was addressed by Ito, Iwata, and Murota~\cite{ItoIwataMurota94} for motivating 
 analysis on physical systems with (restricted) symmetry.
 The most effective block triangularization 
 is determined by arranging a maximal chain of 
 maximum-size zero-blocks exposed in $EAF$, 
 where the size of a zero block is defined as 
 the sum of row and column numbers.
This generalizes the classical {\em Dulmage-Mendelsohn decomposition} 
for bipartite graphs and Murota's {\em combinatorial canonical form} for layered mixed matrices; see \cite{HH16DM,MurotaBook}.

Finding a maximum-size zero-block 
is nothing but FR (or MVSP) for
the linear symbolic matrix obtained by multiplying variable $x_{\alpha\beta}$ to $A_{\alpha\beta}$; 
see \cite[Appendix]{HH_degdet} 
for details. 
The original version of our algorithm was 
designed for this zero-block finding. 
Later, we found that this is essentially nc-rank computation. 
This new version improves analysis (on Theorem~\ref{thm:prox}), 
simplifies the arguments, particularly 
the proof of Theorem~\ref{thm:P2}, and includes 
the new section for the $p$-adic reduction.

\section{Preliminaries}\label{sec:pre}

Let $[n]$ denote $\{1,2,\ldots,n\}$.
Let $\RR$, $\QQ$, $\ZZ$ denote the sets of real, rational, and integer numbers, respectively.
Let $1_X$ denote the vector in $\RR^n$ 
such that $(1_X)_i = 1$ if $i \in X$ and zero otherwise.
The $i$-unit vector $1_{\{i\}}$ is simply written as $1_i$.

\subsection{Convex optimization on CAT(0)-spaces}\label{sec:CAT(0)}

\subsubsection{CAT(0)-spaces}
Let $K$ be a metric space with distance function $d$. 
A {\em path} in $K$ is a continuous map $\gamma:[0,1] \to L$, where
its length is defined as $\sup \sum_{i=0}^{N-1} d(\gamma(t_i), \gamma(t_{i+1}))$
over $0=t_0 < t_1 < t_2 < \cdots < t_N = 1$ and $N > 0$.
If $\gamma(0) = x$ and $\gamma(1) = y$, then 
we say that a path $\gamma$ {\em connects} $x,y$. 
A {\em geodesic} is a path $\gamma$ satisfying
$d(\gamma(s), \gamma(t)) = d(\gamma(0), \gamma(1)) |s - t|$ 
for every $s,t \in [0,1]$.
A {\em geodesic metric space} is a metric space $K$ in which
any pair of two points is connected by a geodesic.
Additionally, 
if a geodesic connecting any points is unique, then
$K$ is called {\em uniquely geodesic}.

We next introduce a CAT(0) space.
Informally, it is defined as a geodesic metric space 
in which any triangle is not thicker than the corresponding triangle in Euclidean plane. 
We here adopt the following definition.
A geodesic metric space $K$ is said to be {\em CAT(0)} 
if for every point $x \in K$, every geodesic $\gamma:[0,1] \to K$ and $t \in [0,1]$, it holds
\begin{equation}\label{eqn:CAT(0)}
d(x,\gamma(t))^2 \leq (1-t) d(x,\gamma(0))^2 + td(x,\gamma(1))^2 - t(1-t)d(\gamma(0),\gamma(1))^2.
\end{equation}
The following property of a CAT(0) space is a basis of introducing convexity.
\begin{Prop}[{\cite[Proposition 1.4]{BrHa}}]\label{prop:uniquely-geodesic}
	A {\rm CAT}$(0)$-space is uniquely geodesic.
\end{Prop}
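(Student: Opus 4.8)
The plan is to obtain uniqueness directly from the defining inequality~(\ref{eqn:CAT(0)}); existence of a geodesic between any two points is already part of the definition of a CAT(0) space (it is a geodesic metric space), so nothing needs to be argued there. Suppose $\gamma,\gamma':[0,1]\to K$ are two geodesics with the same endpoints $\gamma(0)=\gamma'(0)=x$ and $\gamma(1)=\gamma'(1)=y$, and set $L:=d(x,y)$. The goal is to show $\gamma(t)=\gamma'(t)$ for every $t\in[0,1]$.

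Fix $t\in[0,1]$ and apply~(\ref{eqn:CAT(0)}) with the base point taken to be $\gamma'(t)$ and the geodesic taken to be $\gamma$, yielding
\[
d(\gamma'(t),\gamma(t))^2 \leq (1-t)\, d(\gamma'(t),x)^2 + t\, d(\gamma'(t),y)^2 - t(1-t) L^2 .
\]
Now I would use that $\gamma'$ is itself a geodesic: from the defining identity $d(\gamma'(s),\gamma'(u)) = L\,|s-u|$ one gets $d(\gamma'(t),x) = d(\gamma'(t),\gamma'(0)) = tL$ and $d(\gamma'(t),y) = d(\gamma'(t),\gamma'(1)) = (1-t)L$. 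Substituting these exact values, the right-hand side becomes $(1-t)t^2 L^2 + t(1-t)^2 L^2 - t(1-t)L^2 = t(1-t)L^2 - t(1-t)L^2 = 0$. Hence $d(\gamma'(t),\gamma(t))^2 \leq 0$, which forces $\gamma'(t)=\gamma(t)$. Since $t$ was arbitrary, $\gamma=\gamma'$, proving the claim.

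There is no genuine obstacle here: the one point requiring care is to invoke the geodesic property in the strong pointwise form supplied by the definition, so that $d(\gamma'(t),x)$ and $d(\gamma'(t),y)$ are computed \emph{exactly} rather than merely estimated; after that the algebra collapses to $0$. One could symmetrize by also applying the inequality with the roles of $\gamma$ and $\gamma'$ interchanged, but this is unnecessary once the one-sided estimate already gives distance zero.
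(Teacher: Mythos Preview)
Your argument is correct and is exactly the standard proof (the one in \cite[Proposition~1.4]{BrHa}): apply the CAT(0) inequality~(\ref{eqn:CAT(0)}) with base point $\gamma'(t)$ to the geodesic $\gamma$, plug in the exact distances $d(\gamma'(t),x)=tL$ and $d(\gamma'(t),y)=(1-t)L$, and the right-hand side collapses to $0$. The paper itself does not supply a proof of this proposition---it merely cites \cite{BrHa}---so there is nothing further to compare.
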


Suppose that $K$ is a {\rm CAT}$(0)$ space.
For points $x,y$ in $K$, let $[x,y]$ 
denote the image of a unique geodesic $\gamma$ connecting $x,y$.
For $t \in [0,1]$, the point $p$ on $[x,y]$ 
with $d(x,p)/d(x,y) = t$ is formally written 
as $(1-t)x + t y$.

A function $f: K \to \RR$ is said to be {\em convex}
if for all $x,y \in K, t \in [0,1]$
it satisfies 
\begin{equation*}%\label{eqn:convexity}
 f((1-t) x + t y) \leq (1-t) f(x) + t f(y).
\end{equation*}
If it satisfies a stronger inequality
\begin{equation*}%\label{eqn:strong_convexity}
f((1-t) x + t y) \leq (1-t) f(x) + t f(y) - \frac{\kappa}{2} t(1-t) d(x,y)^2
\end{equation*}
for some $\kappa > 0$, then $f$ is said to be 
{\em strongly convex} with parameter $\kappa > 0$.
In this paper, 
we always assume that a convex function is continuous.
A function $f:K \to \RR$ is said to be {\em $L$-Lipschitz}
with parameter $L \geq 0$
if for all $x,y \in K$
it satisfies 
\begin{equation*}
	|f(x) - f(y)| \leq L d(x,y).
\end{equation*}

\begin{Lem}\label{lem:d^2}
	For any $z \in K$,
	the function $x \mapsto d(z,x)^2$ is strongly convex with parameter $\kappa = 2$, and 
	is $L$-Lipschitz with $L = 2\diam K$, where 
	$\diam K:= \sup_{x,y \in K} d(x,y)$ denotes the diameter of $K$
	
\end{Lem}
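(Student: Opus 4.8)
The plan is to prove the two assertions separately, both directly from the CAT(0) inequality~(\ref{eqn:CAT(0)}) and the uniquely-geodesic property. For strong convexity with $\kappa = 2$: fix $z \in K$ and a geodesic $\gamma:[0,1]\to K$ with endpoints $x = \gamma(0)$, $y = \gamma(1)$, and fix $t \in [0,1]$. I would apply~(\ref{eqn:CAT(0)}) with the point $z$ in the role of $x$, obtaining
\[
d(z,\gamma(t))^2 \leq (1-t)\,d(z,x)^2 + t\,d(z,y)^2 - t(1-t)\,d(x,y)^2 .
\]
Since $\gamma(t) = (1-t)x + ty$ by the notation introduced before the definition of convexity, and since $d(x,y)$ is exactly the distance between the two endpoints, this is precisely the strong-convexity inequality
\[
f\big((1-t)x + ty\big) \leq (1-t)f(x) + t f(y) - \tfrac{\kappa}{2}\,t(1-t)\,d(x,y)^2
\]
with $f(\cdot) = d(z,\cdot)^2$ and $\kappa = 2$. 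So this half is essentially a direct reading of~(\ref{eqn:CAT(0)}); there is no real obstacle, only the bookkeeping of matching the notation $(1-t)x+ty$ to $\gamma(t)$.

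For the Lipschitz estimate, fix $x, y \in K$. I would write
\[
|d(z,x)^2 - d(z,y)^2| = |d(z,x) - d(z,y)|\cdot(d(z,x) + d(z,y)),
\]
bound the first factor by $d(x,y)$ using the triangle inequality (reverse triangle inequality), and bound the second factor by $2\diam K$, since each of $d(z,x)$ and $d(z,y)$ is at most $\diam K = \sup_{u,v\in K} d(u,v)$. Multiplying gives $|d(z,x)^2 - d(z,y)^2| \leq 2\,(\diam K)\,d(x,y)$, i.e. $L$-Lipschitz with $L = 2\diam K$.

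The only point requiring any care is the case $\diam K = \infty$: then the Lipschitz claim is vacuous (or should be read as holding for no finite $L$), and in the intended application $K$ will be the orthoscheme complex of a finite modular lattice, which is compact, so $\diam K < \infty$. I would add a one-line remark to this effect rather than dwell on it. Thus the main ``obstacle'' is merely stating the hypothesis cleanly; the mathematical content of both parts is immediate from~(\ref{eqn:CAT(0)}) and elementary inequalities.
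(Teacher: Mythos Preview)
Your proposal is correct and follows essentially the same approach as the paper: strong convexity is read off directly from the CAT(0) inequality~(\ref{eqn:CAT(0)}), and the Lipschitz bound is obtained by factoring the difference of squares, applying the reverse triangle inequality, and bounding $d(z,x)+d(z,y)$ by $2\diam K$.
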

The former follows directly from the definition (\ref{eqn:CAT(0)}) of CAT(0)-space.
The latter follows from $d(z,x)^2 - d(z,y)^2 \leq 
(d(z,x)+ d(z,y))(d(z,x) - d(z,y)) 
= (d(z,x)+ d(z,y)) d(x,y) \leq (2 \diam K) d(x,y)$.

\subsubsection{Proximal point algorithm}
Let $K$ be a complete CAT(0)-space (which 
is also called an {\em Hadamard space}).
For a convex function $f: K \to \RR$ and $\lambda > 0$
the {\em resolvent} of $f$ is a map $J_\lambda^f: K \to K$ defined by
\begin{equation*}
J_{\lambda}^f (x) := \argmin_{y \in K} \left( f(y) + \frac{1}{2\lambda} d(x,y)^2 \right)
\quad (x \in K).
\end{equation*}
Since the function $y \mapsto f(y) + \frac{1}{2\lambda} d(x,y)^2$ is strongly convex with parameter $1/\lambda > 0$, 
the minimizer is uniquely determined, and $J_{\lambda}^f$ is well-defined; see \cite[Proposition 2.2.17]{BacakBook}.

The {\em proximal point algorithm (PPA)} is 
to iterate updates $x \leftarrow J_{\lambda}^f(x)$.
This simple algorithm generates a sequence converging to a minimizer of $f$
under a mild assumption; see \cite{Bacak13,BacakBook}.
The {\em splitting proximal point algorithm (SPPA)}~\cite{Bacak14,BacakBook}, which we will use, minimizes a convex function $f: K \to \RR$ represented as the following form
\begin{equation*}
	f := \sum_{i=1}^m f_i,
\end{equation*}
where each $f_i:K \to \RR$ is a convex function.
Consider a sequence $(\lambda_k)_{k=1,2,\ldots,}$ satisfying
\begin{equation*}
\sum_{k=0}^{\infty} \lambda_k = \infty, \quad \sum_{k=0}^\infty \lambda_k^2 < \infty.
\end{equation*}
 
\begin{description}
\item[Splitting Proximal Point Algorithm (SPPA)]
\item[$\bullet$] Let $x_0 \in K$ be an initial point.
\item[$\bullet$] For $k=0,1,2,\ldots$, repeat the following:
\[
x_{km+i} := J_{\lambda_k}^{f_i} (x_{km+i -1}) \quad (i=1,2,\ldots,m).
\]
\end{description}
Ba\v{c}\'{a}k~\cite{Bacak14} showed that the sequence generated by
SPPA converges to a minimizer of $f$ if 
$K$ is locally compact.
Ohta and P\'alfia~\cite{OhtaPalfia15} 
proved sublinear convergence of SPPA if 
$f$ is strongly convex and $K$ is not necessarily locally compact.
\begin{Thm}[{\cite{OhtaPalfia15}}]\label{thm:OhtaPalfia}
Suppose that 
$f$ is strongly convex with parameter $\epsilon > 0$
and each $f_i$ is $L$-Lipschitz.
Let $x^*$ be the unique minimizer of $f$.
Define the sequence $(\lambda_k)$ by
\begin{equation*}
\lambda_k := 1/ \epsilon (k+1).
\end{equation*} 
Then the sequence $(x_\ell)$ generated by SPPA satisfies
\begin{equation*}
d(x_{km}, x^*)^2 = O\left(\frac{\log k}{k} \frac{L^2m^2}{\epsilon^2} \right)
\quad (k =1,2,\ldots).
\end{equation*} 
\end{Thm}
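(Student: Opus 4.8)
The plan is to reduce the whole analysis to a one-dimensional recursion for the scalar quantity $a_k := d(x_{km}, x^*)^2$, namely one of the form
\begin{equation*}
a_{k+1} \le (1 - \lambda_k \epsilon)\, a_k + C \lambda_k^2 L^2 m^2 ,
\end{equation*}
and then to unwind it with the prescribed step sizes. Two ingredients produce this recursion: a \emph{fundamental inequality for resolvents}, which is where the CAT(0) geometry enters, and an a priori displacement bound showing that one resolvent step moves a point by at most $2\lambda L$, which is where the Lipschitz hypothesis enters.

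First I would prove the resolvent inequality: for a convex $g : K \to \RR$, $\lambda > 0$, $x \in K$ and $z := J_\lambda^g(x)$, one has
\begin{equation*}
d(z, y)^2 \le d(x, y)^2 - d(x, z)^2 + 2\lambda\bigl( g(y) - g(z) \bigr) \qquad (y \in K) .
\end{equation*}
This is a one-line consequence of the fact that $G(w) := g(w) + \tfrac{1}{2\lambda} d(x, w)^2$ is strongly convex with parameter $1/\lambda$ (Lemma~\ref{lem:d^2} together with convexity of $g$) and that $z$ minimizes $G$: evaluating the strong-convexity inequality for $G$ along the geodesic $[z, y]$, using $G(z) \le G\bigl((1-t)z + ty\bigr)$ and letting $t \to 0$ gives $G(z) \le G(y) - \tfrac{1}{2\lambda} d(z, y)^2$, which rearranges to the displayed inequality. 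For the displacement bound, taking $y = x$ in the definition of $z = J_\lambda^{f_i}(x)$ yields $\tfrac{1}{2\lambda} d(x, z)^2 \le f_i(x) - f_i(z) \le L\, d(x, z)$, so $d(x, z) \le 2\lambda L$; hence inside the $k$-th cycle each of the $m$ inner steps moves the iterate by at most $2\lambda_k L$, and $d(x_{km}, x_{km+i}) \le 2 i \lambda_k L$ by the triangle inequality.

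Next I would apply the resolvent inequality with $g = f_i$, $x = x_{km+i-1}$, $z = x_{km+i}$, $y = x^*$ for $i = 1, \ldots, m$ and sum: the terms $d(x_{km+i-1}, x^*)^2$ telescope to $a_k - a_{k+1}$, the nonpositive terms $-d(x_{km+i-1}, x_{km+i})^2$ are dropped, and one obtains $a_{k+1} \le a_k + 2\lambda_k \sum_{i=1}^m \bigl( f_i(x^*) - f_i(x_{km+i}) \bigr)$. Replacing each argument $x_{km+i}$ by $x_{km}$ changes $f_i$ by at most $L\, d(x_{km}, x_{km+i}) \le 2 i \lambda_k L^2$, so the sum differs from $f(x^*) - f(x_{km}) = \sum_i \bigl(f_i(x^*) - f_i(x_{km})\bigr)$ by at most $\lambda_k L^2 m(m+1)$; plugging in the strong-convexity bound $f(x_{km}) - f(x^*) \ge \tfrac{\epsilon}{2} a_k$ then gives the promised recursion $a_{k+1} \le (1 - \lambda_k \epsilon)\, a_k + 2\lambda_k^2 L^2 m(m+1)$.

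Finally, substituting $\lambda_k = 1/(\epsilon(k+1))$ and multiplying through by $k+1$ turns the recursion into $(k+1) a_{k+1} \le k\, a_k + 2 L^2 m(m+1)/\bigl(\epsilon^2 (k+1)\bigr)$; setting $b_k := k\, a_k$, so that $b_0 = 0$, this reads $b_{k+1} \le b_k + 2 L^2 m(m+1)/\bigl(\epsilon^2 (k+1)\bigr)$, and telescoping gives $b_k \le \bigl(2 L^2 m(m+1)/\epsilon^2\bigr) \sum_{j=1}^k 1/j = O\bigl( (L^2 m^2/\epsilon^2) \log k \bigr)$, i.e.\ $a_k = O\bigl( (\log k / k)(L^2 m^2/\epsilon^2) \bigr)$. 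I expect the only genuinely delicate point to be the clean derivation of the resolvent inequality in the CAT(0) setting — this is the sole place where the comparison inequality for CAT(0) spaces, in the form of Lemma~\ref{lem:d^2}, is used — together with the bookkeeping of the Lipschitz errors accumulated over the $m$ inner iterations, where the factor $\sum_{i=1}^m i = m(m+1)/2$ is precisely what produces the $m^2$ in the final bound; everything downstream of the scalar recursion is a routine discrete Gr\"onwall argument.
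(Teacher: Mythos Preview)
The paper does not supply its own proof of this theorem: it is quoted from \cite{OhtaPalfia15} and used as a black box in the analysis of Theorem~\ref{thm:prox}. So there is no in-paper argument to compare against.

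That said, your proof is correct and is essentially the argument of Ohta--P\'alfia. The resolvent inequality you derive is the standard one in Hadamard spaces (it appears, e.g., in Ba\v{c}\'ak's book), and your derivation via strong convexity of $w \mapsto g(w) + \tfrac{1}{2\lambda}d(x,w)^2$ is clean. The displacement bound $d(x, J_\lambda^{f_i}(x)) \le 2\lambda L$, the telescoping over one cycle, the Lipschitz correction producing the $m(m+1)$ factor, the use of strong convexity of $f$ at its minimizer to get $f(x_{km}) - f(x^*) \ge \tfrac{\epsilon}{2}a_k$, and the final discrete Gr\"onwall step with $b_k = k a_k$ are all accurate and match the original reference. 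No gaps.
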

%Note that Ohta and P\'alfia stated this theorem 
%assuming $L \geq 1$ but this condition is not used in their proof, and does not %affect our argument. 

\subsection{Geometry of modular lattices}\label{sec:modularlattice}
We use basic terminologies and facts in lattice theory; see e.g., \cite{Gratzer}.
A {\em lattice} ${\cal L}$ 
is a partially ordered set
in which every pair $p,q$ of elements 
has meet $p \wedge q$ (greatest common lower bound) 
and join $p \vee q$ (lowest common upper bound). 
Let $\preceq$ denote the partial order, where 
$p \prec q$ means $p \preceq q$ and $p \neq q$.
A pairwise comparable subset of ${\cal L}$, 
arranged as $p_0 \prec p_1 \prec \cdots \prec p_k$,
is called a {\em chain} (from $p_0$ to $p_k$),
where $k$ is called the length. 
In this paper, we only consider lattices in which 
any chain has a finite length.
Let ${\bf 0}$ and ${\bf 1}$ denote the minimum and maximum elements of ${\cal L}$, respectively.
The rank $r(p)$ of element $p$ is defined 
as the maximum length of a chain from ${\bf 0}$ to $p$. 
The rank of lattice ${\cal L}$ is defined as the rank of ${\bf 1}$. 
For elements $p,q$ with $p \preceq q$
the {\em interval} $[p, q]$ is the set of elements $u$ with $p \preceq u \preceq q$.
Restricting $\preceq$ to $[p, q]$, the interval  $[p, q]$ 
is a lattice with maximum $q$ and minimum $p$. 
If $p \neq q$ and $[p,q] = \{p,q\}$, we say that $q$ {\em covers} $p$ and denote $p \prec: q$ or $q :\succ p$.
For two lattices ${\cal L}, {\cal M}$, 
their direct product ${\cal L} \times {\cal M}$ 
becomes a lattice, where the partial order on ${\cal L} \times {\cal M}$ 
 is defined by $(p,p') \preceq (q,q') \Leftrightarrow p \preceq q, p' \preceq q'$.

A lattice ${\cal L}$ is called {\em modular} if 
for every triple $x,a,b$ of elements with $x \preceq b$, 
it holds $x \vee (a \wedge b) = (x \vee a) \wedge b$.
A modular lattice satisfies the Jordan-Dedekind chain condition. This is, the lengths of 
maximal chains of every interval 
are the same. 
Also, we often use the following property:
\begin{equation}\label{eqn:basic}
p \prec: p'\ \Rightarrow \ p \wedge q = p' \wedge q \ \
\mbox{or} \ \  p \wedge q \prec: p' \wedge q. 
\end{equation}
This can be seen from the definition of modular lattices, 
and holds also when replacing $\wedge$ by $\vee$.

A modular lattice ${\cal L}$ is said to be {\em complemented} 
if every element can be represented as a join of atoms, where
an {\em atom} is an element of rank $1$.
It is known that for a complemented modular lattice, every interval is complemented modular, 
and a lattice obtained by reversing the partial order 
is also complemented modular. 
The product of two complemented modular lattices is also complemented modular.

A canonical example of a complemented modular lattice is 
the family ${\cal L}$ of all subspaces of a vector space $U$, 
where the partial order is the inclusion order with 
$\wedge = \cap$, and $\vee = +$.
Another important example is a {\em Boolean lattice}---a lattice isomorphic to 
the poset $2^{[n]}$ of all subsets 
of $[n]$ with respect to the inclusion order $\subseteq$.

\subsubsection{Frames---Boolean sublattices in a complemented modular lattice}
Let ${\cal L}$ be a complemented modular lattice of rank $n$, and let
$r$ denote the rank function of ${\cal L}$. 
A complemented modular lattice is equivalent 
to a {\em spherical building of type A}~\cite{BuildingBook}. 
We consider a lattice-theoretic counterpart of an {\em apartment}, which is a maximal Boolean sublattice of ${\cal L}$.

A {\em base} is a set of $n$ atoms $a_1,a_2,\ldots,a_n$ 
with $a_1 \vee a_2 \vee \cdots \vee a_n = {\bf 1}$.
The sublattice $\langle a_1,a_2,\ldots, a_n \rangle$ 
generated by a base $\{a_1,a_2,\ldots,a_n\}$
is called a {\em frame}, which is isomorphic
to a Boolean lattice $2^{[n]}$ 
by the map
\[
X \mapsto \bigvee_{i \in X} a_i. 
\]
\begin{Lem}[{see e.g.,\cite{Gratzer}}]\label{lem:frame}
Let ${\cal L}$ be a complemented modular lattice of rank $n$.
\begin{itemize}
	\item[(1)] For chains ${\cal C},{\cal D}$ in $\cal L$,  
	there is a frame ${\cal F} \subseteq {\cal L}$ containing $\cal C$ and $\cal D$.
    \item[(2)] For a frame ${\cal F}$ and an ordering $a_1,a_2,\ldots, a_n$ of its basis, 
    define map $\varphi_{a_1,a_2,\ldots, a_n}: {\cal L} \to {\cal F}$ by
    \begin{equation}\label{eqn:retraction}
    p \mapsto \bigvee \{a_i \mid  i \in [n]: p \wedge (a_1 \vee a_2 \vee \cdots \vee a_{i}) :\succ  p \wedge (a_1 \vee a_2 \vee \cdots \vee a_{i-1}) \}.
    \end{equation}
    Then $\varphi_{a_1,a_2,\ldots, a_n}$ is a retraction to ${\cal F}$ such that it is rank-preserving (i.e., $r(p) = r(\varphi(p))$)
    and order-preserving (i.e., $p \preceq q \Rightarrow \varphi(p) \preceq \varphi(q)$).
\end{itemize}
\end{Lem}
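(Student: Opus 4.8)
The plan is to reduce both parts to the known structure theory of complemented modular lattices, namely that such a lattice is a spherical building of type $A$ and that bases (maximal independent sets of atoms) generate Boolean sublattices. For part~(1), I would first refine the two given chains $\mathcal{C}$ and $\mathcal{D}$ into a single \emph{common refinement}: since $\mathcal{L}$ satisfies the Jordan--Dedekind chain condition, I can interpolate between consecutive elements of $\mathcal{C}$ and $\mathcal{D}$ to obtain two maximal chains $\mathcal{C}' \supseteq \mathcal{C}$ and $\mathcal{D}' \supseteq \mathcal{D}$ from ${\bf 0}$ to ${\bf 1}$. The core combinatorial content is then the fact (the lattice-theoretic analogue of ``any two chambers of a building lie in a common apartment'') that any two maximal chains are contained in a common frame. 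I would establish this by induction on the rank $n$: pick an atom $a_1 \preceq$ (first element of $\mathcal{C}'$), find an atom $b$ such that $a_1 \vee b$ contains the first element of $\mathcal{D}'$ if it differs, more carefully, use complementation to choose a base compatible with the ``first step'' of both chains, then pass to a suitable interval (a complemented modular lattice of rank $n-1$, by the quoted closure properties) and apply induction. Collecting the atoms produced along the way yields a base whose generated frame contains both chains.

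For part~(2), the map $\varphi = \varphi_{a_1,\ldots,a_n}$ sends $p$ to the join of those $a_i$ at which the rank of $p \wedge (a_1 \vee \cdots \vee a_i)$ strictly increases. I would first observe that, writing $c_i := a_1 \vee \cdots \vee a_i$, the sequence $p \wedge c_0 \preceq p \wedge c_1 \preceq \cdots \preceq p \wedge c_n = p$ is a chain in which, by the modular-lattice property~(\ref{eqn:basic}), each step is either an equality or a cover; hence exactly $r(p)$ of the $n$ steps are covers, which immediately gives $r(\varphi(p)) = r(p)$. That $\varphi$ restricts to the identity on $\mathcal{F}$ is a direct check: for $p = \bigvee_{i \in X} a_i$, the step at index $i$ is a cover precisely when $i \in X$. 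For order-preservation, I would use that $p \preceq q$ implies $p \wedge c_i \preceq q \wedge c_i$ for every $i$, so whenever the step at $i$ is a cover for $p$ it is also a cover for $q$ (here one needs that a cover inside the smaller chain forces a strict rank increase, hence a cover, in the larger one, again via~(\ref{eqn:basic}) and the chain condition); therefore the index set defining $\varphi(p)$ is contained in that defining $\varphi(q)$, giving $\varphi(p) \preceq \varphi(q)$ under the Boolean isomorphism $\mathcal{F} \cong 2^{[n]}$.

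The main obstacle I anticipate is part~(1): carefully producing, by induction, a base that is simultaneously compatible with \emph{both} maximal chains, which is exactly where the building-theoretic ``two chambers in a common apartment'' statement does its work and where one must argue with complements and the modular law rather than merely quote it. The rank-preservation in part~(2) is routine once~(\ref{eqn:basic}) is in hand, and since the paper offers this as a cited lemma (``see e.g.,~\cite{Gratzer}''), I would keep the write-up brief, emphasizing the reduction to standard facts and the role of~(\ref{eqn:basic}), rather than reproving the building structure from scratch.
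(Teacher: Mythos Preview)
Your proposal is correct and follows essentially the same approach as the paper: induction on rank for part~(1) after refining to maximal chains, and the use of~(\ref{eqn:basic}) to get both rank-preservation (exactly $r(p)$ covers in the chain $p \wedge c_0 \preceq \cdots \preceq p \wedge c_n$) and order-preservation (index-set containment) for part~(2). The only point where the paper is more concrete than your sketch is the induction step in~(1): it projects the second chain $\mathcal{D}$ onto the interval $[{\bf 0}, p_{n-1}]$ via $q_i \mapsto p_{n-1} \wedge q_i$, applies induction there to get $a_1,\ldots,a_{n-1}$, and then picks the final atom $a_n$ below the first $q_j \not\preceq p_{n-1}$ so that $q_{j-1} \vee a_n = q_j$---a top-down variant of the bottom-up step you outline.
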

This is nothing but a part of the axiom of building, 
where the map in (2) is essentially
a {\em canonical retraction} to an apartment.
\begin{proof}	
	We show (1) by the induction on $n$.
	Suppose that ${\cal C} = ({\bf 0} = p_0 \prec p_1 \prec \cdots \prec p_n = {\bf 1})$
	and ${\cal D} = ({\bf 0} = q_0 \prec q_1 \prec \cdots \prec q_n = {\bf 1})$.
	Consider the maximal chains ${\cal C}', {\cal D}'$ 
	from ${\bf 0}$ to $p_{n-1}$, 
	where  ${\cal C}' := ({\bf 0} = p_0 \prec p_1 \prec \cdots \prec p_{n-1})$ 
	and ${\cal D}'$ 
	consists of $q_i' := p_{n-1} \wedge q_i$ $(i=0,1,\ldots,n)$.
	Note that the maximality of ${\cal D}'$ follows from (\ref{eqn:basic}).
	By induction, there is a frame $\langle a_1,a_2,\ldots,a_{n-1} \rangle$ 
	of the interval $[{\bf 0}, p_{n-1}]$ (that is a complemented modular lattice of rank $n-1$)
	such that it contains ${\cal C}', {\cal D}'$. 
	Consider the first index $j$ such that $q_j \not \preceq p_{n-1}$.
	Then $q'_i = q_i$ for $i<j$, and $q_j'=q_{j-1}$. 
	For $i \geq j$, 
	by $p_{n-1} \vee q_j = {\bf 1}$ and modularity, it holds that 
	$q_i$ covers $q_i'$. Again by modularity, 
	it must hold $q_i' \vee q_j = q_i$ for $i \geq j$.
	By complementality, 
	we can choose an atom $a_n$ 
	such that $q_{j-1} \vee a_n = q_j$.
	Now $\langle a_1,a_2,\ldots, a_n \rangle$ is a frame as required.
	
		(2). By (\ref{eqn:basic}), $\{p \wedge (a_1 \vee \cdots \vee a_i) \}_i$
	is a maximal chain from ${\bf 0}$ to $p$.
	From this and the chain condition, 
	the rank-preserving property follows.
	Suppose that $p \preceq q$ and $p \wedge 
	b \prec: p \wedge b'$ for $b \prec: b'$. 
	Then $[p \wedge b, b] \ni q \wedge b \preceq  q \wedge b'  \in [p \wedge b, b']$. 
	By (\ref{eqn:basic}) and the chain condition 
	from $p \wedge b$ to $b'$, it must hold 
	$q \wedge b \prec: q \wedge b'$. 
	This means that any index $i$ 
	appeared in (\ref{eqn:retraction}) for $p$ also appears in that for $q$. Then, the order-preserving property follows.	
\end{proof} 
Suppose that ${\cal L}$ is the lattice of all vector subspaces of $\KK^n$, 
and that we are given two chains ${\cal C}$ and ${\cal D}$ of vector subspaces, 
where each subspace $X$ in the chains is given by a matrix $B$ 
with ${\rm Im}\, B = X$ (or $\ker B = X$). 
The above proof can be implemented by Gaussian elimination, 
and obtain vectors  $a_1,a_2,\ldots,a_n$ 
with ${\cal C}, {\cal D} \subseteq \langle a_1,a_2,\ldots, a_n \rangle$
in polynomial time.

\subsubsection{The orthoscheme complex of a modular lattice}\label{subsub:K(L)} 
Let ${\cal L}$ be a modular lattice of rank $n$.
Let $K({\cal L})$ denote the {\em geometric realization} 
of the {\em order complex} of ${\cal L}$. That is,
$K({\cal L})$ is the set of all formal convex combinations 
$x = \sum_{p \in {\cal L}} \lambda(p) p$  of elements in ${\cal L}$ such that
the {\em support} $\{p \in {\cal L} \mid \lambda(p) \neq 0 \}$ of $x$ 
is a chain of ${\cal L}$.
Here ``convex" means that the coefficients $\lambda(p)$ are nonnegative reals and 
$\sum_{p \in {\cal L}} \lambda(p) = 1$.
A {\em simplex} corresponding to a chain $\cal C$
is the subset of points whose supports belong to $\cal C$.

We next introduce a metric on $K({\cal L})$. 
For a maximal simplex $\sigma$ corresponding to 
a maximal chain ${\cal C} = p_0 \prec p_1 \prec \cdots \prec p_n$, 
define a map $\varphi_{\sigma}: \sigma \to \RR^n$ 
by
\begin{equation}\label{eqn:phi_sigma}
\varphi_{\sigma}(x) = \sum_{i=1}^{n} \lambda_i 1_{[i]} 
\quad (x = \sum_{i=0}^n \lambda_i p_i \in \sigma).
\end{equation}
This is a bijection from $\sigma$ to the $n$-dimensional simplex of vertices 
$0, 1_{[1]}, 1_{[2]}, 1_{[3]}, \ldots, 1_{[n]}$.
This simplex is called the $n$-dimensional {\em orthoscheme}.
The metric $d_{\sigma}$ 
on each simplex $\sigma$ of $K({\cal L})$ is defined by
\begin{equation}\label{eqn:dsigma}
d_{\sigma}(x,y) := \| \varphi_\sigma (x) - \varphi_\sigma (y) \|_2 \quad (x,y \in \sigma). 
\end{equation}
Accordingly, the length $d(\gamma)$ of a path $\gamma: [0,1] \to K({\cal L})$
is defined as the supremum of $\sum_{i=0}^{N-1} d_{\sigma_i}(\gamma(t_i),\gamma(t_{i+1}))$
over all
$0 = t_0 < t_1 < t_2 < \cdots < t_N = 1$ and $N \geq 1$,  
in which $\gamma([t_i,t_{i+1}])$ belongs to a simplex $\sigma_i$ for each $i$.
Then the metric $d(x,y)$ on $K({\cal L})$
is defined  as the infimum of $d(\gamma)$ over all 
paths $\gamma$ connecting $x,y$.
The resulting metric space $K({\cal L})$ 
is called the {\em orthoscheme complex} of ${\cal L}$~\cite{BM10}.
By Bridson's theorem~\cite[Theorem 7.19]{BrHa}, 
$K({\cal L})$ is a complete 
geodesic metric space.
Basic properties of the orthoscheme complex of a modular lattice are summarized as follows.
\begin{Prop}\label{prop:basic_K(L)}
	\begin{itemize}
		\item[(1)]  \cite{CCHO} For a modular lattice ${\cal L}$,
		the orthoscheme complex $K({\cal L})$ is 
		a complete CAT(0) space. 
		\item[(2)] \cite{BM10,CCHO} For two 
		modular lattices ${\cal L}, {\cal M}$, the  orthoscheme complex  $K({\cal L} \times {\cal M})$ is isometric to $K({\cal L}) \times K({\cal M})$ with metric given by
		\[
		d((x,y),(x',y')) := \sqrt{ d(x,x')^2  + d(y,y')^2} \quad ((x,y),(x',y') \in K({\cal L}) \times K({\cal M})).
		\] 
		\item[(3)]\cite{BM10,CCHO}
		For a Boolean lattice ${\cal L} = 2^{[n]}$,
		the orthoscheme complex $K({\cal L})$ is isometric 
		to the $n$-cube $[0,1]^n \subseteq \RR^n$, where the isometry is given by
		\begin{equation}\label{eqn:F-coordinate}
		x  = \sum_{i} \lambda_i X_i
		\mapsto  \sum_{i} \lambda_i 1_{X_i}.
		\end{equation}
		\item[(4)] \cite{CCHO}
For a complemented modular lattice ${\cal L}$ of rank $n$ and 
a frame ${\cal F}$ of ${\cal L}$ with an ordering $a_1,a_2,\ldots,a_n$ of its basis, 
the map $\varphi = \varphi_{a_1,a_2,\ldots, a_n}: {\cal L} \to {\cal F}$ is extended to 
$\bar \varphi:K({\cal L}) \to K({\cal F})$ by
\begin{equation*}
x= \sum_{i} \lambda_i p_i  \mapsto  \sum_{i} \lambda_i \varphi(p_i).
\end{equation*}
Then $\bar \varphi$ is a nonexpansive retraction from $K({\cal L})$ to $K({\cal F})$. 
In particular, 
\begin{itemize}
	\item[(4-1)]  $K({\cal F}) \simeq [0,1]^n$ is 
	an isometric subspace of $K({\cal L})$, and
	\item[(4-2)] $\diam K({\cal L}) = \sqrt{n}$.
\end{itemize}
 	\end{itemize}
\end{Prop}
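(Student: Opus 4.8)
The plan is to treat (1), (2), (3) as on-the-shelf facts about orthoscheme complexes and to derive (4), (4-1), (4-2) from them together with Lemma~\ref{lem:frame}. For (1), the CAT(0) property of $K({\cal L})$ for modular ${\cal L}$ is precisely the main theorem of \cite{CCHO}, while completeness was already noted above to follow from Bridson's theorem \cite[Theorem 7.19]{BrHa}; so this item only records the citation. For (2), one exhibits the bijection $\sum_i\lambda_i(p_i,q_i)\mapsto\bigl(\sum_i\lambda_i p_i,\ \sum_i\lambda_i q_i\bigr)$ between $K({\cal L}\times{\cal M})$ and $K({\cal L})\times K({\cal M})$ (collecting the coefficients of repeated first, resp.\ second, coordinates), observes that it is simplicial onto a subdivision of the product cell structure because a cover in a product lattice advances exactly one coordinate, and invokes \cite{BM10,CCHO} for the fact that, simplex by simplex, it is an isometry for the $\ell_2$-product metric. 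For (3), one uses the standard correspondence between maximal chains of $2^{[n]}$ and permutations $\pi$ of $[n]$: in the coordinates $\varphi_\sigma$ of (\ref{eqn:phi_sigma}), the map (\ref{eqn:F-coordinate}) carries the maximal simplex of the $\pi$-chain isometrically onto the orthoscheme $\{x\in\RR^n: 0\le x_{\pi(1)}\le\cdots\le x_{\pi(n)}\le 1\}$, and these $n!$ orthoschemes tile the convex body $[0,1]^n$; since a straight segment between two points of a convex body stays inside it, the intrinsic (infimum-of-path-lengths) metric of $K(2^{[n]})$ coincides with the Euclidean metric of $[0,1]^n$.

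The substance is in (4). First I would check that the retraction $\varphi=\varphi_{a_1,\ldots,a_n}$ of Lemma~\ref{lem:frame}(2), being rank- and order-preserving, carries every chain $p_{i_0}\prec p_{i_1}\prec\cdots$ of ${\cal L}$ to a chain $\varphi(p_{i_0})\prec\varphi(p_{i_1})\prec\cdots$ of ${\cal F}$ with the same ranks (the images are pairwise distinct because their ranks strictly increase); hence the affine extension $\bar\varphi$ is a well-defined simplicial map $K({\cal L})\to K({\cal F})$. Next I would show $\bar\varphi$ is simplex-wise isometric: given a maximal simplex $\sigma$ of a maximal chain $p_0\prec\cdots\prec p_n$ of ${\cal L}$, its image is the maximal simplex $\tau$ of $\varphi(p_0)\prec\cdots\prec\varphi(p_n)$, and because $r(\varphi(p_i))=i$ one reads off $\varphi_\tau(\bar\varphi(x))=\varphi_\sigma(x)$ for all $x\in\sigma$, so $d_\tau(\bar\varphi(x),\bar\varphi(x'))=d_\sigma(x,x')$, and the same holds on every face. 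Applying this along a simplex-adapted subdivision of any path $\gamma$ in $K({\cal L})$ shows that $\bar\varphi\circ\gamma$ has length equal to that of $\gamma$; therefore $\bar\varphi$ does not increase distances. Since $\varphi$ fixes every element of ${\cal F}$ (immediate from (\ref{eqn:retraction})), $\bar\varphi$ fixes $K({\cal F})$ pointwise, so it is a nonexpansive retraction onto $K({\cal F})$, which is (4).

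For (4-1): the inclusion $K({\cal F})\hookrightarrow K({\cal L})$ is distance-nonincreasing (an ${\cal F}$-path is an ${\cal L}$-path of the same length), and conversely a geodesic of $K({\cal L})$ between two points $x,y\in K({\cal F})$ maps under $\bar\varphi$ to a path within $K({\cal F})$ of no greater length joining $x,y$; hence $d_{K({\cal F})}=d_{K({\cal L})}$ on $K({\cal F})$, and (3) together with ${\cal F}\cong 2^{[n]}$ identifies this isometric subspace with $[0,1]^n$. Finally (4-2): $\diam K({\cal L})\ge\diam K({\cal F})=\diam[0,1]^n=\sqrt n$ by (4-1), while for arbitrary $x,y\in K({\cal L})$ the supports of $x$ and $y$ are chains, so by Lemma~\ref{lem:frame}(1) a single frame ${\cal F}$ contains both of them, whence $x,y\in K({\cal F})$ and $d(x,y)\le\diam K({\cal F})=\sqrt n$. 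I expect the only real friction to be bookkeeping: reconciling the infimum-of-path-lengths definition of the metric with the explicit orthoscheme coordinates, i.e.\ verifying that path length is computed simplex-piece by simplex-piece and that $\bar\varphi$ is an isometry on each closed simplex including its faces; the genuinely geometric work (the CAT(0) verification and the cube/tiling identification) is what (1)--(3) import from \cite{CCHO,BM10}.
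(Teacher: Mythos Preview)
Your proposal is correct, and in fact it does more than the paper does. The paper offers no proof of this proposition at all: parts (1)--(4) are each attributed to \cite{CCHO} or \cite{BM10,CCHO}, and the only surrounding text is a remark relating (1) to the CAT(1) property of spherical buildings together with an explicit description of the isometry in (2) and of the ${\cal F}$-coordinate in (3). In particular, (4), (4-1), and (4-2) are simply cited to \cite{CCHO} without argument.

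Your derivation of (4) from Lemma~\ref{lem:frame} and (3) is the natural one and is sound: the rank- and order-preserving properties of $\varphi$ make $\bar\varphi$ a well-defined simplicial map that is an isometry on each maximal simplex (via the orthoscheme coordinates (\ref{eqn:phi_sigma})), hence $1$-Lipschitz for the length metric, and it fixes $K({\cal F})$ pointwise. Your deductions of (4-1) and (4-2) from this and Lemma~\ref{lem:frame}(1) are exactly right. One minor wording point: where you say the image path ``has length equal to that of $\gamma$,'' what you actually need (and what your simplex-wise isometry gives) is that it has length \emph{at most} that of $\gamma$, which is enough for nonexpansiveness; equality happens to hold but is not required.
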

For a complemented modular lattice ${\cal L}$, 
the CAT(0)-property of $K({\cal L})$ is equivalent to the CAT(1)-property 
of the corresponding spherical building, as shown in~\cite{HKS17}. 
\paragraph{The isometry between $K({\cal L}) \times K({\cal M})$ and $K({\cal L} \times {\cal M})$.}
%The identification between $K({\cal L}) \times K({\cal M})$ and $K({\cal L} \times {\cal M})$
%via an isometry (Proposition~\ref{prop:basic_K(L)}~(2))
%plays an fundamental role in the subsequent arguments in this paper.

The isometry from $K({\cal L} \times {\cal M})$ to 
$K({\cal L}) \times K({\cal M})$ (Proposition~\ref{prop:basic_K(L)}~(2))
is given by 
\begin{equation*}
z = \sum_{i} \lambda_i (p_i,q_i) \mapsto \left(\sum_{i} \lambda_i p_i, \sum_i \lambda_i q_i \right). 
\end{equation*}
The inverse map is constructed as follows: For $(x,y) = (\sum_{i} \mu_i p_i, \sum_j \nu_j q_j) =: (x',y')$,  
choose maximum $p_i, q_j$ with $\mu_i \neq 0$, $\nu_j \neq 0$, 
set $z \leftarrow z + \min (\mu_i,\nu_j) (p_i,q_j)$,  
$(x',y') \leftarrow (x',y') -  \min (\mu_i,\nu_j) (p_i,q_j)$, repeat it from $z=0$ until $(x',y') = (0,0)$.
The resulting $z$ satisfies $\varphi(z) = (x,y)$.

\paragraph{The ${\cal F}$-coordinate of a frame ${\cal F}$.}
A frame ${\cal F} = \langle a_1,a_2,\ldots,a_n \rangle$ 
is isomorphic to Boolean lattice $2^{[n]}$ 
by $a_{i_1} \vee a_{i_2} \vee \cdots \vee a_{i_k} \mapsto \{i_1,i_2,\ldots,i_k\}$. 
Further, the subcomplex $K({\cal F})$ is viewed as an $n$-cube $[0,1]^n$, and
a point $x$ in $K({\cal F})$ 
is viewed as $x = (x_1,x_2,\ldots,x_n) \in [0,1]^n$
via isometry $(\ref{eqn:F-coordinate})$.
This $n$-dimensional vector $(x_1,x_2,\ldots,x_n)$ is called 
the {\em ${\cal F}$-coordinate} of $x$. 
From ${\cal F}$-coordinate $(x_1,x_2,\ldots,x_n)$, 
the original expression of $x$ is recovered by
sorting $x_1,x_2,\ldots,x_n$ in decreasing order as: $x_{i_1} \geq x_{i_2} \geq \cdots \geq x_{i_n}$, and letting
\begin{equation}\label{eqn:recover}
x = (1- x_{i_1}){\bf 0} + \sum_{k=1}^n (x_{i_k} - x_{i_{k+1}}) (a_{i_1} \vee a_{i_2} \vee \cdots \vee a_{i_k}),
\end{equation}
where $x_{i_{n+1}} := 0$.

\subsubsection{Submodular functions and Lov\'asz extensions}

Let ${\cal L}$ be a modular lattice.
A function $f:{\cal L} \to \RR$ is said to be {\em submodular} if
\begin{equation*}
f(p) + f(q) \geq f(p \wedge q) + f(p \vee q) \quad (p,q \in {\cal L}).
\end{equation*}
For a function $f:{\cal L} \to \RR$, 
the {\em Lov\'asz extension} $\overline f: K({\cal L}) \to \RR$  is defined by
\begin{equation*}
\overline f (x) := \sum_{i} \lambda_i f(p_i) \quad  (x = \sum_{i} \lambda_i p_i \in K({\cal L}) ).
\end{equation*}
In the case of ${\cal L} = 2^{[n]}$, 
this definition of the Lov\'asz extension coincides with the original one~\cite{FujiBook,Lovasz83}
by $K({\cal L}) \simeq [0,1]^n$ (Proposition~\ref{prop:basic_K(L)}~(3)).
\begin{Prop}\label{prop:Lovasz_ext}
Let ${\cal L}$ be a modular lattice of rank $n$.
For a function $f: {\cal L} \to \RR$, we have the following.
\begin{itemize}
\item[(1)] \cite{HH16L-convex} $f$ is submodular if and only if 
the Lov\'asz extension  $\overline{f}$ is convex. 
\item[(2)] The Lov\'asz extension $\overline{f}$  
is $L$-Lipschitz with
$
L = 2 \sqrt{n} \max_{p \in {\cal L}} |f(p)|.
$
\item[(3)] Suppose that $f$ is integer-valued. 	
For $x \in K({\cal L})$,
if $\overline{f}(x) - \min_{p \in {\cal L}} f(p) < 1$, then
a minimizer of~$f$ exists in the support of $x$.
\end{itemize}
\end{Prop}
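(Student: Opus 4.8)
The plan is as follows. Part (1) is the substantive claim and is precisely the result of \cite{HH16L-convex}, which I would simply cite. Were one to reprove it, the route would be: convexity of $\overline f$ reduces to the defining CAT(0)-convexity inequality along every geodesic of $K({\cal L})$; since $\overline f$ is \emph{affine} on each closed maximal simplex when expressed through the orthoscheme coordinate $\varphi_\sigma$ of (\ref{eqn:phi_sigma}) (see the formula in the next paragraph), the inequality holds trivially for geodesic segments contained in one simplex, and the real work is for geodesics crossing between maximal simplices through a lower-dimensional face. There the local geometry of $K({\cal L})$ around that face, governed by the modular-lattice axiom through (\ref{eqn:basic}), is what forces the two affine pieces to fit together into a convex function; this is the heart of \cite{HH16L-convex}.

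For part (2) I would reduce the Lipschitz estimate to a single maximal simplex. First, $\overline f$ is well defined and continuous on $K({\cal L})$: a point lying on a face shared by several maximal simplices has a unique expression $\sum_i\lambda_i p_i$ with $\{p_i:\lambda_i\neq 0\}$ a chain, so the defining sum is independent of the ambient simplex. Fix a maximal simplex $\sigma$ with maximal chain $p_0\prec p_1\prec\cdots\prec p_n$, and for $x=\sum_{i=0}^n\lambda_i p_i$ let $u=\varphi_\sigma(x)$, so $u_j=\sum_{i=j}^n\lambda_i$. Inverting, $\lambda_0=1-u_1$ and $\lambda_j=u_j-u_{j+1}$ (for $j\geq 1$, with $u_{n+1}:=0$), whence
\[
\overline f(x)=f(p_0)+\sum_{j=1}^n u_j\bigl(f(p_j)-f(p_{j-1})\bigr),
\]
an affine function of $u$ whose gradient $g$ satisfies $g_j=f(p_j)-f(p_{j-1})$. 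By Cauchy--Schwarz and the definition (\ref{eqn:dsigma}) of $d_\sigma$ we get $|\overline f(x)-\overline f(y)|\leq\|g\|_2\,d_\sigma(x,y)$ for $x,y\in\sigma$, and $\|g\|_2\leq\sqrt n\,\max_j|f(p_j)-f(p_{j-1})|\leq 2\sqrt n\,\max_{p\in{\cal L}}|f(p)|=:L$. For the global bound, take any path $\gamma$ from $x$ to $y$ together with a subdivision $0=t_0<\cdots<t_N=1$ with each $\gamma([t_i,t_{i+1}])$ contained in a simplex $\sigma_i$, as in the definition of $d(\gamma)$; telescoping and the per-simplex bound give $|\overline f(x)-\overline f(y)|\leq\sum_i|\overline f(\gamma(t_i))-\overline f(\gamma(t_{i+1}))|\leq L\sum_i d_{\sigma_i}(\gamma(t_i),\gamma(t_{i+1}))\leq L\,d(\gamma)$, and taking the infimum over $\gamma$ yields $|\overline f(x)-\overline f(y)|\leq L\,d(x,y)$.

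For part (3), write $x=\sum_i\lambda_i p_i$ with support the chain $\{p_i:\lambda_i>0\}$; then $\overline f(x)=\sum_i\lambda_i f(p_i)$ is a convex combination of the values $f(p)$ over $p$ in the support. Set $m:=\min_{p\in{\cal L}}f(p)$. If no minimizer of $f$ lay in the support of $x$, then $f(p)>m$ for every $p$ in the support, hence $f(p)\geq m+1$ by integrality, and therefore $\overline f(x)\geq m+1$, contradicting $\overline f(x)-m<1$. Thus some $p$ in the support attains $m$, as claimed.

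I expect the only delicate point to be the local-to-global step in part (2) (and, similarly, in any reproof of part (1)): one needs that the paths entering the infimum defining $d(x,y)$ admit subdivisions into pieces lying in single closed simplices, and that the simplex-wise formulas for $\overline f$ agree on overlaps. Both follow from the simplicial structure of $K({\cal L})$ and the consistency noted above, but they are exactly what makes it legitimate to replace the intrinsic metric $d$ by the Euclidean simplex metrics $d_\sigma$. Parts (2) and (3) are otherwise routine computations; the genuine content is part (1), imported from \cite{HH16L-convex}.
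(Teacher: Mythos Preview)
Your arguments for (2) and (3) are correct and match the paper's almost exactly: the paper also reduces (2) to a single maximal simplex via the coordinates $u_k=\sum_{i\geq k}\lambda_i$, bounds the increment there by $2\sqrt{n}\,C$ (using $\sum_k|\lambda_k-\mu_k|\leq 2\sum_k|u_k-v_k|\leq 2\sqrt{n}\|u-v\|_2$ rather than your slightly cleaner Cauchy--Schwarz on the gradient), and then globalizes along the geodesic; your (3) is identical to the paper's. The one genuine difference is your sketch for (1). The paper's sketch does \emph{not} analyze how affine pieces meet across faces; instead it invokes the apartment/frame structure: any two points $x,y\in K({\cal L})$ lie in $K({\cal F})$ for some frame ${\cal F}$, and since $K({\cal F})\simeq[0,1]^n$ is an isometric subspace the geodesic $[x,y]$ stays in it, so convexity of $\overline f$ reduces to convexity of the ordinary Lov\'asz extension of $f|_{\cal F}$ on the cube, i.e., to classical submodularity. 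This reduction-to-Boolean route is shorter and more structural than the local piecewise-affine analysis you outline, though it leans on Proposition~\ref{prop:basic_K(L)}(4) and Lemma~\ref{lem:frame}, which as stated are for \emph{complemented} modular lattices; the full modular case is, as you say, deferred to \cite{HH16L-convex}.
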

\begin{proof}
	(1) [sketch].  
	For two points $x,y \in K({\cal L})$, 
	there is a frame ${\cal F}$ such that $K({\cal F})$ contains $x,y$.
    Since $K({\cal F})$ is an isometric subspace of $K({\cal L})$ (Proposition~\ref{prop:basic_K(L)}~(4)),
	the geodesic $[x,y]$ belongs to $K({\cal F})$.
	Hence, a function on 
	$K({\cal L})$ is convex if and only 
	if it is convex on $K({\cal F})$ for every frame ${\cal F}$.
	For any frame ${\cal F}$, 
	the restriction of a submodular function $f:{\cal L} \to \RR$ to ${\cal F}$
	is a usual submodular function on Boolean lattice 
	${\cal F} \simeq 2^{[n]}$.
	Hence
	$\overline f:K({\cal F}) \to \RR$ is viewed as
	the usual Lov\'asz extension by $[0,1]^n \simeq K({\cal F})$, and is convex.
	
	(2). We first show that the restriction $\overline{f}|_{\sigma}$ of $\overline{f}$ to any maximal simplex $\sigma$ is $L$-Lipschitz with 
	$
	L \leq 2 \sqrt{n} \max_{p \in {\cal L}} |f(p)|
	$.
	Suppose that $\sigma$ corresponds to 
	a chain ${\bf 0} = p_0 \prec p_1 \prec \cdots \prec p_n = {\bf 1}$.
	Let $x = \sum_{k}\lambda_k p_k$ and $y = \sum_{k}\mu_k p_k$ be points in $\sigma$. 
	Define $u,v \in \RR^n$ by
	\[
	u_k := \lambda_{k} + \lambda_{k+1} + \cdots + \lambda_{n}, \quad 
	v_k := \mu_{k} + \mu_{k+1} + \cdots + \mu_{n}.
	\]
	%\begin{eqnarray*}
	%u_k &:=& \lambda_{k} + \lambda_{k+1} + \cdots + \lambda_{n}, \\
	%v_k &:=& \mu_{k} + \mu_{k+1} + \cdots + \mu_{n}.
	%\end{eqnarray*}
	By (\ref{eqn:phi_sigma}) and (\ref{eqn:dsigma}), we have 
	$d_{\sigma}(x,y) = \|u-v\|_2.$
	Let $C := \max_{p \in {\cal L}} |f(p)|$. 
	Then we have
	\begin{eqnarray*}
	&& |\overline{f}(x) - \overline{f}(y)| =  \left| 
	\sum_{k=0}^n (\lambda_k - \mu_k) f(p_k) \right| 
	\leq  C\sum_{k=0}^n |\lambda_k - \mu_k|  \\
	&& = C\sum_{k=0}^n | u_{k}- u_{k+1} - (v_{k} - v_{k+1})| \leq
	2 C \sum_{k=1}^n |u_k - v_k|   \leq 2 \sqrt{n} C  \|u-v\|_2, 	
	\end{eqnarray*}
	where we let $u_0 = v_0 := 1$ and $u_{n+1}= v_{n+1} := 0$.
	Thus, $\overline{f}|_{\sigma}$ is $2 \sqrt{n} C$-Lipschitz. 
	
	Next we show that $\overline{f}$ is  $2 \sqrt{n} C$-Lipschitz. 
	For any $x,y\in K({\cal L})$, 
	choose the geodesic $\gamma$ between $x$ and $y$, 
	and $0=t_0 < t_1 < \cdots < t_m = 1$ 
	such that $\gamma([t_i,t_{i+1}])$ belongs  to simplex $\sigma_i$.
	Then we have
%	$
%	|\overline f(x) -   \overline f(y)|  \leq  
%	\sum_{i=1}^m | \overline{f}(\gamma(t_i)) - \overline f (\gamma(t_{i-1})) |  \leq  2 \sqrt{n} C \sum_{i=1}^{m} d_{\sigma_i}(\gamma(t_i),\gamma(t_{i-1})) =  2 \sqrt{n} C d(x,y).
%	$
	%
	\[
	|\overline f(x) -   \overline f(y)|  \leq  
	\sum_{i=1}^m | \overline{f}(\gamma(t_i)) - \overline f (\gamma(t_{i-1})) |  \leq  2 \sqrt{n} C \sum_{i=1}^{m} d_{\sigma_i}(\gamma(t_i),\gamma(t_{i-1})) =  2 \sqrt{n} C d(x,y).
	\]
	
	(3). Let $f^* :=\min_{p \in {\cal L}} f(p)$, and let $x = \sum_{i} \lambda_i p_i$.
	Suppose to the contrary that all $p_i$'s satisfy $f(p_i) > f^*$.
	Then $f(p_i) \geq f^* + 1$.
	Hence $\overline f(x) = \sum_{i} \lambda_i f(p_i) 
	\geq \sum_{i}\lambda_i (f^*+1) = f^* + 1$.
	However this contradicts $\overline{f}(x) - f^* < 1$.
\end{proof}

\section{Algorithm}\label{sec:algorithm}

%We deal with a weighted generalization of MVSP, 
%the {\em weighted maximum vanishing subspace problem (WMVSP)}.
%We are given a partitioned matrix 
%$A = (A_{\alpha \beta})$ of type $(m_{1},m_{2},\ldots,m_{\mu}; n_{1}, n_2,\ldots,n_{\nu})$, 
%and nonnegative integer weights $C_{\alpha},D_{\beta}$ 
%for $1 \leq \alpha \leq \mu$ and $1 \leq \beta \leq \nu$.
%WMVSP asks to maximize 
%\begin{equation*}
%\sum_{\alpha}C_{\alpha} \dim X_{\alpha} + \sum_{\beta} D_{\beta} \dim Y_{\beta} 
%\end{equation*}
%over all vanishing subspaces 
%$(X_{1},X_{2},\ldots,X_{\mu},Y_{1},Y_2,\ldots,Y_{\nu})$.
%Let $m := \sum_{\alpha} m_{\alpha}$ and $n := \sum_{\beta} n_{\beta}$, i.e.,  $A = (A_{\alpha \beta})$ is an $m \times n$ matrix. 
%The goal of this section is 
%to prove the pseudo-polynomial time solvability of WMVSP:
%\begin{Thm}\label{thm:main'}
%WMVSP can be solved in time polynomial in
%$m,n$ and $W$, where $W$ is the maximum of weights $C_{\alpha}, D_{\beta}$.
%\end{Thm}
%
\subsection{Nc-rank is submodular minimization}\label{subsec:submodular}
Consider MVSP for a linear symbolic matrix $A = \sum_{i=1}^m A_i x_i$.
Let us formulate MVSP as 
an unconstrained submodular function minimization 
over a complemented modular lattice.
Let ${\cal L}$ and ${\cal M}$ 
denote the lattices of all 
vector subspaces of $\mathbb{K}^{n}$, where 
the partial order of ${\cal L}$ is the inclusion order 
and the partial order of ${\cal M}$ is the reverse inclusion order. 
Let $R_{i} = R_{A_{i}}: {\cal L} \times {\cal M} \to \ZZ$ be defined by
\begin{equation*}%\label{eqn:R}
R_i (X,Y) := \rank A_i |_{X \times Y} \quad ((X,Y) \in {\cal L} \times {\cal M}),
\end{equation*}
where $A_i |_{X \times Y}: X \times Y \to \mathbb{K}$ is 
the restriction of $A_i$ to $X \times Y$. 
Then the condition $A_i (X,Y) = \{0\}$ in MVSP can be written as 
$R_{i}(X,Y) = 0$.
By using $R_{i}$ as a penalty term, 
consider the following unconstrained problem:
\begin{eqnarray*}
	{\rm MVSP}_{R}: \quad
	{\rm Min.} &&  
	- \dim X - \dim Y
	+ (2n+1) \sum_{i=1}^mR_{i}(X,Y) \\
	{\rm s.t.} && (X,Y) \in {\cal L} \times {\cal M}.
\end{eqnarray*}
Then it is easy to see:
\begin{Lem}
	Any optimal solution of MVSP$_R$ is optimal to MVSP.	
\end{Lem}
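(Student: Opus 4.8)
The plan is a standard exact-penalty argument, exploiting that the coefficient $2n+1$ strictly exceeds the total variation $2n$ of the linear term $-\dim X-\dim Y$ over ${\cal L}\times{\cal M}$.

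First I would record two elementary observations. (i) MVSP is feasible: the pair $(X,Y)=(\{0\},\{0\})$ satisfies $A_i(\{0\},\{0\})=\{0\}$, hence $R_i(\{0\},\{0\})=0$ for all $i$, and its MVSP-objective value is $0$. (ii) For every $(X,Y)\in{\cal L}\times{\cal M}$ we have $0\le \dim X+\dim Y\le 2n$, so $-2n\le -\dim X-\dim Y\le 0$; moreover each $R_i(X,Y)$ is a nonnegative integer, and $R_i(X,Y)=0$ holds exactly when $A_i(X,Y)=\{0\}$ (as already noted when MVSP$_R$ was introduced).

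Next, let $(X^*,Y^*)$ be an optimal solution of MVSP$_R$, and I claim $\sum_{i=1}^m R_i(X^*,Y^*)=0$. If not, then $\sum_i R_i(X^*,Y^*)\ge 1$, and by (ii) the MVSP$_R$-value at $(X^*,Y^*)$ is at least $-2n+(2n+1)=1$. But the MVSP$_R$-value at $(\{0\},\{0\})$ is $0<1$, contradicting optimality of $(X^*,Y^*)$. Hence $(X^*,Y^*)$ satisfies all constraints $A_i(X^*,Y^*)=\{0\}$, i.e., it is feasible for MVSP.

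Finally, on the feasible region of MVSP the penalty term vanishes, so there the MVSP$_R$- and MVSP-objectives coincide. Since $(X^*,Y^*)$ minimizes the MVSP$_R$-objective over all of ${\cal L}\times{\cal M}$, it does so in particular over the MVSP-feasible pairs, and on that set the two objectives agree and $(X^*,Y^*)$ itself lies there; therefore $(X^*,Y^*)$ minimizes the MVSP-objective among feasible pairs, i.e., it is optimal for MVSP. There is no genuine obstacle: the only point demanding attention is the arithmetic inequality $2n+1>2n$, which guarantees that a strictly positive penalty can never be compensated by the linear term $-\dim X-\dim Y$ — precisely the reason the penalty coefficient was chosen to be $2n+1$.
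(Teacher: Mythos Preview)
Your proof is correct and is precisely the standard exact-penalty argument the paper leaves implicit (the paper merely says ``it is easy to see'' and gives no proof). The key arithmetic point---that any strictly positive integer penalty contributes at least $2n+1$, which exceeds the maximum possible gain $2n$ from the linear term---is exactly what you identify.
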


\begin{Prop}\label{prop:submodular}
	The objective function of MVSP$_R$ is submodular on ${\cal L} \times {\cal M}$.
\end{Prop}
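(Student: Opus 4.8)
The plan is to decompose the objective function into three pieces and check submodularity of each on the product lattice ${\cal L} \times {\cal M}$. Write the objective as $g(X,Y) = -\dim X - \dim Y + (2n+1)\sum_{i=1}^m R_i(X,Y)$. The first two terms are handled by a general fact: a modular (in particular, valuation-type) function is both submodular and supermodular, so it suffices to observe that on ${\cal L}$ the map $X \mapsto -\dim X$ is submodular because $\dim(X \cap X') + \dim(X + X') = \dim X + \dim X'$, and likewise on ${\cal M}$ (where meet and join are interchanged) the map $Y \mapsto -\dim Y$ is submodular; a function depending only on the ${\cal L}$-coordinate, or only on the ${\cal M}$-coordinate, is automatically submodular on the product. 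So the whole problem reduces to showing that each penalty term $R_i: {\cal L} \times {\cal M} \to \ZZ$ is submodular on ${\cal L} \times {\cal M}$.

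The key step is therefore the inequality, for fixed $i$ (write $R = R_i$, $B = A_i$ regarded as a bilinear form),
\[
R(X,Y) + R(X',Y') \geq R\bigl((X,Y) \vee (X',Y')\bigr) + R\bigl((X,Y) \wedge (X',Y')\bigr).
\]
Unfolding the lattice operations: in ${\cal L}$ the join is $X + X'$ and the meet is $X \cap X'$, while in ${\cal M}$ (reverse inclusion) the join is $Y \cap Y'$ and the meet is $Y + Y'$. So the right-hand side is $R(X+X',\,Y \cap Y') + R(X \cap X',\,Y + Y')$, and the claim becomes
\[
\rank B|_{X \times Y} + \rank B|_{X' \times Y'} \;\geq\; \rank B|_{(X+X') \times (Y \cap Y')} + \rank B|_{(X \cap X') \times (Y + Y')}.
\]
I would prove this by a dimension-counting argument on the bilinear form. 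Fix a subspace $Z$ and think of $B|_{Z \times W}$ via the induced linear map $\beta_Z : W \to Z^*$ given by $w \mapsto B(\cdot, w)|_Z$; then $\rank B|_{Z \times W} = \dim W - \dim(W \cap \ker \beta_Z)$, and $\ker \beta_Z$ is the right-annihilator $Z^\perp := \{w : B(Z,w) = 0\}$, which is \emph{order-reversing} in $Z$: $Z \subseteq Z'$ implies $(Z')^\perp \subseteq Z^\perp$. Hence for any subspaces $Z, W$,
\[
\rank B|_{Z \times W} = \dim W - \dim(W \cap Z^\perp).
\]
Substituting this into the four terms, the $\dim W$-parts cancel in a modular-law fashion ($\dim(Y) + \dim(Y') = \dim(Y \cap Y') + \dim(Y + Y')$), and what remains to be shown is the reverse inequality for the intersection terms:
\[
\dim\bigl((Y \cap Y') \cap (X+X')^\perp\bigr) + \dim\bigl((Y+Y') \cap (X \cap X')^\perp\bigr) \;\leq\; \dim(Y \cap X^\perp) + \dim(Y' \cap (X')^\perp).
\]
Using $(X+X')^\perp = X^\perp \cap (X')^\perp$ and $(X \cap X')^\perp \supseteq X^\perp + (X')^\perp$, the left side is at most $\dim\bigl(Y \cap Y' \cap X^\perp \cap (X')^\perp\bigr) + \dim\bigl((Y+Y') \cap (X^\perp + (X')^\perp)\bigr)$, and then by modularity of the subspace lattice applied to the two subspaces $Y \cap X^\perp$ and $Y' \cap (X')^\perp$ (their intersection contains the first summand, their sum is contained in the second), the sum of these two dimensions is at most $\dim(Y \cap X^\perp) + \dim(Y' \cap (X')^\perp)$, as required.

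The main obstacle is organizing this dimension bookkeeping cleanly: one must be careful that $(X \cap X')^\perp$ can strictly contain $X^\perp + (X')^\perp$, so the inequality goes the "right way" only because we are bounding the meet-term from above, and one must track which inequalities are equalities (the modular law for $\dim$) versus genuine inclusions. I expect the cleanest writeup to isolate the two elementary facts — (i) $\rank B|_{Z \times W} = \dim W - \dim(W \cap Z^\perp)$ with $Z^\perp$ order-reversing, and (ii) for subspaces $P, Q$ of a common space, $\dim(P \cap Q) + \dim(P + Q) = \dim P + \dim Q$ — and then let the computation above fall out, after which submodularity of $(2n+1)\sum_i R_i$ follows by nonnegative scaling and summation, and adding the two submodular linear terms $-\dim X$ and $-\dim Y$ completes the proof.
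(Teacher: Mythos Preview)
Your decomposition and overall strategy are sound, and the route via the formula $\rank B|_{Z\times W}=\dim W-\dim(W\cap Z^{\perp})$ is a clean, self-contained alternative to the paper's proof. However, the inequality you isolate after the $\dim W$-cancellation is stated with the wrong sign, and the subsequent bounds are then carried out in the wrong direction. Concretely: substituting the rank formula into $R(X,Y)+R(X',Y')\geq R(X+X',Y\cap Y')+R(X\cap X',Y+Y')$ and cancelling $\dim Y+\dim Y'=\dim(Y\cap Y')+\dim(Y+Y')$ leaves
\[
\dim\bigl((Y\cap Y')\cap(X+X')^{\perp}\bigr)+\dim\bigl((Y+Y')\cap(X\cap X')^{\perp}\bigr)\ \geq\ \dim(Y\cap X^{\perp})+\dim(Y'\cap(X')^{\perp}),
\]
not $\leq$. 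With $P:=Y\cap X^{\perp}$ and $Q:=Y'\cap(X')^{\perp}$, the first summand on the left \emph{equals} $P\cap Q$ (since $(X+X')^{\perp}=X^{\perp}\cap(X')^{\perp}$), and the second summand \emph{contains} $P+Q$ (since $P\subseteq Y+Y'$, $P\subseteq X^{\perp}\subseteq(X\cap X')^{\perp}$, and similarly for $Q$). Hence the left side is at least $\dim(P\cap Q)+\dim(P+Q)=\dim P+\dim Q$, which is the required bound. Your use of $(X\cap X')^{\perp}\supseteq X^{\perp}+(X')^{\perp}$ to pass to a \emph{smaller} subspace gives a lower bound on the left, not an upper bound; once you flip ``at most'' to ``at least'' throughout, the argument is correct.

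For comparison, the paper takes a different route: it invokes the frame lemma (Lemma~\ref{lem:frame}) to choose bases of $\KK^n$ in which $X,X',X\cap X',X+X'$ and $Y,Y',Y\cap Y',Y+Y'$ are all coordinate subspaces, so that $R$ becomes the rank of a fixed submatrix and submodularity reduces to the classical rank inequality $\rank A[I,J]+\rank A[I',J']\geq\rank A[I\cap I',J\cup J']+\rank A[I\cup I',J\cap J']$, cited from~\cite{MurotaBook}. Your argument avoids the frame machinery and the external citation, working directly with the $(\cdot)^{\perp}$ operator and the dimension formula; in fact your key identity is exactly Lemma~\ref{lem:formula_R}, which the paper proves later for other purposes. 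The paper's proof is shorter because it outsources the core inequality, while yours is more self-contained.
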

\begin{proof}
Submodularity of $X \mapsto - \dim X$
and $Y \mapsto - \dim Y$ directly follows from 
$\dim X+ \dim X' = \dim (X \cap X') + \dim (X+X')$.	
Thus it suffices to verify 
that $R = R_i:{\cal L} \times {\cal M} \to \ZZ$ is submodular: 	
\begin{equation*}%\label{eqn:submo_R}
R(X,Y) + R(X',Y') \geq R(X \cap X', Y + Y') + R(X + X', Y \cap Y').
\end{equation*}	
Note that 
an equivalent statement appeared in \cite[Lemma 4.2]{IwataMurota95}. 	
	
By Lemma~\ref{lem:frame}, 
there is a base $\{ a_1,a_2,\ldots,a_n\}$ of ${\cal L}$ 
with $X, X', X \cap X', X + X' \subseteq \langle a_1,a_2,\ldots,a_n \rangle$,  
and there is  
a base $\{ b_1,b_2,\ldots,b_n\}$ of ${\cal M}$ 
with $Y, Y', Y \cap Y', Y + Y' \subseteq \langle b_1,b_2,\ldots,b_n \rangle$.
Consider the matrix representation $A = (A(a_i,b_j))$ 
with respect to these bases.
For $I,J \subseteq [n]$, 
let $A[I,J]$ 
be the submatrix of $A$ with row set $I$ and column set~$J$.
Submodularity of $R$ follows from 
the  rank inequality
\begin{equation*}
\rank A[I,J] + \rank A[I',J'] \geq \rank A[I \cap I',J \cup J'] 
+ \rank A[I \cup I',J \cap J'].
\end{equation*}
See~\cite[Proposition 2.1.9]{MurotaBook}.	
\end{proof}
Thus,
MVSP$_R$ has a convex relaxation on CAT(0) space 
$K({\cal L} \times {\cal M}) = K({\cal L}) \times K({\cal M})$ with objective function $g$ that is the Lov\'asz extension
\begin{equation}\label{eqn:g}
g(x,y) := -\overline{\dim}(x) - \overline{\dim} (y) + (2n+1) \sum_{i=1}^{m} \overline{R_i}(x,y).
\end{equation}

%Thus, MVSP$_R$ is equivalent to the following continuous optimization on CAT(0) space:
%\begin{eqnarray*}
%	\overline{{\rm MVSP}}_R: \quad  
%	{\rm Min.} &&  
%	- \overline{\dim} (x) - \overline{\dim}(y) 
%	+ (2n+1) \sum_{i=1}^m \overline{R_{i}}(x,y) \\
%	{\rm s.t.} && (x,y) \in K({\cal L}) \times K({\cal M}) = K({\cal L} \times {\cal M}), 
%\end{eqnarray*}
%where $\overline{\dim}$ denotes the Lov\'asz extension of $X \mapsto \dim X$.
%By Proposition~\ref{prop:Lovasz_ext}~(1), 
%$\overline{{\rm MVSP}}_R$ is a convex optimization problem. 
%\begin{Lem}\label{lem:Lipschitz_g0}
%	The objective function of $\overline{{\rm MVSP}}_R$ is convex.
%\end{Lem}

\subsection{Splitting proximal point algorithm for nc-rank}
We apply SPPA
to the following perturbed version of the convex relaxation:
\begin{eqnarray*}
	\quad {\rm Min.} &&  
		- \overline{\dim} (x) - \overline{\dim}(y) 
	+ (2n+1) \sum_{i=1}^m \overline{R_{i}}(x,y) + (1/8n) (d(\mathbf{0}, x)^2 + d(\mathbf{0},y)^2)\\
	{\rm s.t.} &&  (x,y) \in K({\cal L}) \times K({\cal M}). 
\end{eqnarray*}
We regard the objective function $\tilde g$ as 
$\sum_{i=1}^{m+2} f_i$, where $f_i$ is defined by
\begin{equation*}
f_i(x,y) := \left\{
\begin{array}{cl}
- \overline{\dim}(x) + (1/8n) d(\mathbf{0},x)^2 & {\rm if}\ 
k = m+1, \\
-  \overline{\dim}(y) + (1/8n) d(\mathbf{0},y)^2 & {\rm if}\ 
k = m+2, \\
(2n+1) \overline{R_{i}}(x, y) & 
{\rm if}\ 1 \leq i \leq m
\end{array}\right.
\end{equation*}
\begin{Thm}\label{thm:prox}
	Let $(z_{\ell})$ be the sequence obtained by SPPA 
	applied to $\tilde g = \sum_{i=1}^{m+2} f_i$ with $\epsilon := 1/2n$.
	For $\ell = \Omega ( n^{12}m^5 \log nm)$, the support of $z_{\ell} =(x_\ell, y_\ell)$ 
contains a minimizer of MVSP.
\end{Thm}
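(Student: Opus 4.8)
The plan is to combine the sublinear convergence estimate of Theorem~\ref{thm:OhtaPalfia} with the Lipschitz bounds of Lemma~\ref{lem:d^2} and Proposition~\ref{prop:Lovasz_ext}~(2), together with the rounding statement of Proposition~\ref{prop:Lovasz_ext}~(3). First I would check that SPPA is applicable, i.e.\ that each $f_i$ is convex: for $1 \le i \le m$ this is Proposition~\ref{prop:submodular} combined with Proposition~\ref{prop:Lovasz_ext}~(1) (submodularity of $R_i$ gives convexity of $\overline{R_i}$), and for $i = m+1, m+2$ it is the sum of a convex Lov\'asz extension ($-\dim$ is submodular, since $\dim$ is modular, hence $-\overline{\dim}$ is convex) and the strongly convex function $x \mapsto (1/8n)\,d(\mathbf 0, x)^2$ from Lemma~\ref{lem:d^2}. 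Next, $\tilde g$ is strongly convex: the terms $(1/8n)(d(\mathbf 0,x)^2 + d(\mathbf 0,y)^2)$ contribute strong convexity with parameter $\kappa = 2 \cdot (1/8n) = 1/4n$ on each factor, hence with parameter $\epsilon = 1/2n$ on the product space $K({\cal L}) \times K({\cal M})$ (using Proposition~\ref{prop:basic_K(L)}~(2) for the product metric), while the remaining terms are convex; so $\epsilon = 1/2n$ as stated.

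The second step is to bound the Lipschitz constant $L$ that appears in Theorem~\ref{thm:OhtaPalfia}, uniformly over the $f_i$. By Proposition~\ref{prop:basic_K(L)}~(4-2), $\diam K({\cal L}) = \diam K({\cal M}) = \sqrt{n}$, so $\diam(K({\cal L}) \times K({\cal M})) = \sqrt{2n}$. For $1 \le i \le m$: each $R_i$ takes values in $\{0,1,\ldots,n\}$, so $\max |R_i| \le n$, and by Proposition~\ref{prop:Lovasz_ext}~(2) (applied on the rank-$2n$ lattice ${\cal L} \times {\cal M}$) $\overline{R_i}$ is $2\sqrt{2n}\cdot n$-Lipschitz, hence $f_i = (2n+1)\overline{R_i}$ is $O(n^{2.5})$-Lipschitz. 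For $i = m+1, m+2$: $\dim$ takes values in $\{0,\ldots,n\}$, so $-\overline{\dim}$ is $O(n^{1.5})$-Lipschitz by Proposition~\ref{prop:Lovasz_ext}~(2), and $(1/8n)d(\mathbf 0,\cdot)^2$ is $(1/8n)\cdot 2\sqrt{n} = O(n^{-1/2})$-Lipschitz by Lemma~\ref{lem:d^2}; so these $f_i$ are also $O(n^{1.5})$-Lipschitz. Taking $L = O(n^{2.5})$ works for all $i$, and the number of summands is $N = m+2 = O(m)$.

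The third step is to run the estimate. Plugging $\epsilon = 1/2n$, $L = O(n^{2.5})$, and $N = m+2$ into Theorem~\ref{thm:OhtaPalfia} gives, after $k$ outer rounds,
\[
d(z_{kN}, z^*)^2 = O\!\left( \frac{\log k}{k} \cdot \frac{L^2 N^2}{\epsilon^2}\right) = O\!\left(\frac{\log k}{k}\, n^5 m^2 n^2\right) = O\!\left(\frac{\log k}{k}\, n^7 m^2\right),
\]
where $z^*$ is the unique minimizer of $\tilde g$. Since $\tilde g$ differs from $g$ by the perturbation $(1/8n)(d(\mathbf 0,x)^2 + d(\mathbf 0,y)^2)$, which is bounded by $(1/8n)\cdot 2n = 1/4$ on $K({\cal L}) \times K({\cal M})$, we have $g(z_{kN}) \le \tilde g(z_{kN}) \le \tilde g(z^*) + [\text{small}]$; more carefully, using that $g$ itself is $L'$-Lipschitz with $L' = O(n^{2.5})$ (same bound, summing the pieces of $(\ref{eqn:g})$), a point within distance $\delta$ of $z^*$ has $g$-value at most $\min g + 2\cdot(1/4) + L'\delta$. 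To invoke Proposition~\ref{prop:Lovasz_ext}~(3) on the integer-valued function $G(X,Y) := -\dim X - \dim Y + (2n+1)\sum_i R_i(X,Y)$, I need $\overline G(z_{kN}) - \min G < 1$. Since $\overline G = g$ and the optimal value of MVSP$_R$ is bounded, this requires $g(z_{kN}) - \min g < 1$, i.e.\ it suffices that $2\cdot(1/4) + L'\delta < 1$, i.e.\ $\delta = d(z_{kN}, z^*) < 1/(4L') = \Omega(n^{-2.5})$. Comparing with the displayed bound, this holds once $\frac{\log k}{k}\, n^7 m^2 = O(n^{-5})$, i.e.\ once $k/\log k = \Omega(n^{12} m^2)$, i.e.\ $k = \Omega(n^{12} m^2 \log nm)$; multiplying by $N = m+2$ gives $\ell = kN = \Omega(n^{12} m^3 \log nm)$ iterations. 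Finally, once $\overline G(z_\ell) - \min G < 1$, Proposition~\ref{prop:Lovasz_ext}~(3) yields a minimizer of $G$ — hence, by the Lemma preceding Proposition~\ref{prop:submodular}, an optimal solution of MVSP — in the support of $z_\ell = (x_\ell, y_\ell)$.

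**Main obstacle.** The bookkeeping is routine; the one place requiring care is ensuring that the perturbation is small enough that the rounding step works while the convergence rate stays polynomial — i.e.\ balancing the $1/8n$ coefficient against both $\diam^2 = O(n)$ (so the perturbation stays $O(1)$, in fact $\le 1/4$) and the Lipschitz constant $L' = O(n^{2.5})$ of $g$ (so that the ball of radius $\delta$ on which $g$ is within $1$ of its minimum has $\delta = \Omega(n^{-2.5})$, which the convergence estimate can reach in polynomially many steps). I should also double-check that the stated exponent ($n^{12}m^5$ versus my $n^{12}m^3$) is what the paper's slightly different accounting of the Lipschitz constants and the product-space diameter produces; the discrepancy in the power of $m$ most likely comes from a looser bound $L = O(nm \cdot \sqrt n \cdot n) $ or from counting $N^2 = (m+2)^2$ together with an extra factor from a union/worst-case over the $m$ penalty terms, and reconciling that is the only genuinely fiddly part.
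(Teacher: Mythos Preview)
Your approach is exactly the paper's: bound the Lipschitz constants, invoke Ohta--P\'alfia, compare $g$ and $\tilde g$ via the size of the perturbation, and round via Proposition~\ref{prop:Lovasz_ext}~(3). The paper writes the function-value estimate as $\tilde g(z_{k(m+2)}) - \tilde g(\tilde z) \le (m+2)L\cdot d(z_{k(m+2)},\tilde z)$ and then passes to $g$ using $(1/8n)d(\mathbf 0,\cdot)^2 \le 1/4$ twice, which is the same calculation you do with $L'$ and $\delta$.

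The one slip that accounts for your missing powers of $m$ is the Lipschitz constant of $g$: since $g$ in~(\ref{eqn:g}) has $m+2$ summands, each $O(n^{5/2})$-Lipschitz, you get $L' = O(m\,n^{5/2})$, not $O(n^{5/2})$. With that correction you need $\delta = \Omega\bigl(1/(m\,n^{5/2})\bigr)$, hence $(\log k)/k = O\bigl(1/(n^{12}m^4)\bigr)$, i.e.\ $k = \Omega(n^{12}m^4\log nm)$, and $\ell = k(m+2) = \Omega(n^{12}m^5\log nm)$---precisely the stated bound. (A side remark: on the product space $(1/8n)(d(\mathbf 0,x)^2 + d(\mathbf 0,y)^2) = (1/8n)\,d(\mathbf 0,(x,y))^2$ is $1/(4n)$-strongly convex, not $1/(2n)$; strong-convexity parameters of separable functions do not add across factors. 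The paper also writes $1/2n$, so this is a shared constant-factor issue irrelevant to the $O(\cdot)$ conclusion.)
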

\begin{proof}
	We first show that $f_i$ 
	is $L$-Lipschitz with
	$
	L = O(n^{5/2}).
	$
	By Lemma~\ref{lem:d^2}, Proposition~\ref{prop:basic_K(L)} (4-2), and Proposition~\ref{prop:Lovasz_ext}~(2), 
	the Lipschitz constants of 
		 $\overline{\dim}$ and $d({\bf 0},\cdot)^2$ are $O(n^{3/2})$ and $O(\sqrt{n})$, respectively.
		 Therefore, if $i = m+1$ or $m+2$,
		 then the Lipschitz constant of $f_i$ is $O(n^{3/2})$.
		The Lipschitz constant of other $f_i$ 
		is $O(n^{5/2})$.	
			
The objective function is strongly convex with parameter $1/2n$.
Let $\tilde z$ denote the minimizer of $\tilde g$.
By Theorem~\ref{thm:OhtaPalfia}, we have
\begin{eqnarray*}
&& \tilde g(z_{k(m+2))}) -\tilde g(\tilde z) \leq (m+2) L d(z_{k(m+2)},\tilde z) = 
O\left( \sqrt{\frac{\log k}{k}} n^{6} m^2 \right). 
\end{eqnarray*}
Thus, for $k = \Omega (n^{12} m^4 \log nm)$, it holds
	$
	\tilde g(z_{k(m+2)}) - \tilde g(\tilde z)  < 1/2$.
	
	Let $z^*$ be a minimizer of $g$ of (\ref{eqn:g}).
	Then we have $g(z_{k(m+2)}) - g(z^*) 
	= g(z_{k(m+2)}) - g(\tilde z) + g(\tilde z) - g(z^*) 
	\leq \tilde g(z_{k(m+2)}) - \tilde g(\tilde z) +  (1/8n) d(\mathbf{0},\tilde z)^2 + \tilde g(\tilde z) - \tilde g(z^*) +  (1/8n) d(\mathbf{0},z^*)^2
	\leq  \tilde g(z_{k(m+2)}) - \tilde g(\tilde z) + 1/2 < 1$.
By Proposition~\ref{prop:Lovasz_ext}~(3), the support of $z_{k(m+2)}$ contains a minimizer of MVSP.
\end{proof}
Thus, after a polynomial number of iterations,  
a minimizer $(X^*,Y^*)$ of MVSP exists in the support of $z_{\ell}$.
Our remaining task 
is to show that the resolvent of each summand $f_i$
can be computed in polynomial time.

\subsubsection{Computation of the resolvent for $f_i = - \overline{\dim} + (1/8n) d(\mathbf{0}, \cdot)^2$}
First we consider the resolvent of 
$- \overline{\dim} + (1/8n) d(\mathbf{0}, \cdot)^2$.
This is an optimization problem over the orthoscheme complex of a single lattice.
It suffices to consider the following problem.
\begin{eqnarray*}	
{\rm P1: \quad  Min}. && - \overline{\dim}(x) 
+ \epsilon d(\mathbf{0},x)^2 + \frac{1}{2\lambda} d(x,x^0)^2 \\
{\rm s.t.} && x \in K({\cal L}),
\end{eqnarray*}
where $\epsilon, \lambda > 0$, and $x^0 \in K({\cal L})$.
\begin{Lem}
	Suppose that $x^0$ belongs to a maximal simplex $\sigma$. 
	Then the minimizer $x^*$ of P1 
	exists in $\sigma$.
\end{Lem}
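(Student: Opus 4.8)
The plan is to push the minimizer into a single frame subcomplex, where $F(x):=- \overline{\dim}(x) + \epsilon\, d(\mathbf{0},x)^2 + \frac{1}{2\lambda} d(x,x^0)^2$ becomes a \emph{separable} convex function, and then read off where it is minimized from a monotonicity argument. First note that $-\overline{\dim}$ is convex (it is the Lov\'asz extension of the modular function $-\dim$; see Proposition~\ref{prop:Lovasz_ext}~(1)) while $\epsilon\, d(\mathbf{0},\cdot)^2$ and $\frac{1}{2\lambda}d(\cdot,x^0)^2$ are strongly convex (Lemma~\ref{lem:d^2}), so $F$ is strongly convex on the complete CAT(0) space $K({\cal L})$ and has a unique minimizer $x^*$. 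Let $\sigma$ correspond to the maximal chain $\mathbf 0 = p_0 \prec p_1 \prec \cdots \prec p_n = \mathbf 1$. By Lemma~\ref{lem:frame}~(1) applied to ${\cal C}={\cal D}=\{p_0\prec\cdots\prec p_n\}$, there is a frame ${\cal F}=\langle a_1,\ldots,a_n\rangle$ containing this chain; since the chain is maximal, after reindexing the basis we may assume $p_i = a_1\vee a_2\vee\cdots\vee a_i$ for every $i$. Write $\varphi:=\varphi_{a_1,\ldots,a_n}$ and let $\bar\varphi:K({\cal L})\to K({\cal F})$ be its extension (Lemma~\ref{lem:frame}~(2), Proposition~\ref{prop:basic_K(L)}~(4)). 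A direct inspection of formula~(\ref{eqn:retraction}) gives $\varphi(\mathbf 0)=\mathbf 0$ and $\varphi(p_i)=p_i$, hence $\bar\varphi$ fixes $\mathbf 0$ and every point of $\sigma$; in particular $\bar\varphi(x^0)=x^0$.

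Next I would show that $\bar\varphi$ does not increase $F$. Since $\varphi$ is rank-preserving, $\overline{\dim}(\bar\varphi(x))=\overline{\dim}(x)$ for all $x$. Since $\bar\varphi$ is a nonexpansive retraction fixing $\mathbf 0$ and $x^0$, we also have $d(\mathbf 0,\bar\varphi(x))\le d(\mathbf 0,x)$ and $d(x^0,\bar\varphi(x))\le d(x^0,x)$. Adding these, $F(\bar\varphi(x))\le F(x)$ for every $x\in K({\cal L})$. Applying this with $x=x^*$ shows $\bar\varphi(x^*)$ is again a minimizer, so by uniqueness $x^*=\bar\varphi(x^*)\in K({\cal F})$.

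It then remains to locate $x^*$ inside $K({\cal F})\simeq[0,1]^n$, which by Proposition~\ref{prop:basic_K(L)}~(4-1) is an isometric subspace carrying the Euclidean metric. In ${\cal F}$-coordinates, (\ref{eqn:recover}) together with a telescoping computation yields $\overline{\dim}(x)=\sum_{j=1}^n x_j$, while $d(\mathbf 0,x)^2=\sum_j x_j^2$ and $d(x,x^0)^2=\sum_j (x_j-x^0_j)^2$ (here $x^0\in\sigma\subseteq K({\cal F})$). Hence $F|_{K({\cal F})}=\sum_{j=1}^n\phi_j(x_j)$ with $\phi_j(t)=-t+\epsilon t^2+\frac{1}{2\lambda}(t-x^0_j)^2$, each $\phi_j$ a strictly convex quadratic. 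Its minimizer over the cube is obtained coordinatewise: $x^*_j$ is the clamp to $[0,1]$ of $(1+x^0_j/\lambda)/(2\epsilon+1/\lambda)$, which is nondecreasing in $x^0_j$. Since $x^0\in\sigma$ is exactly the condition $x^0_1\ge x^0_2\ge\cdots\ge x^0_n$, we conclude $x^*_1\ge x^*_2\ge\cdots\ge x^*_n$, i.e. $x^*\in\sigma$.

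The main obstacle is the reduction in the second paragraph: it rests on the fact that the canonical retraction $\bar\varphi$ to the chosen frame complex simultaneously leaves $\overline{\dim}$ invariant (rank-preservation) and is $1$-Lipschitz while fixing \emph{both} anchor points $\mathbf 0$ and $x^0$ of the quadratic terms — and the latter is what forces us to choose the frame so that it contains the maximal chain defining $\sigma$, with its basis ordered compatibly. Once we are inside $K({\cal F})$, the argument is elementary one-variable calculus, so I do not expect difficulties there; the only minor care needed is verifying the coordinate formulas $\overline{\dim}(x)=\sum_j x_j$ and the Euclidean form of the two distance terms, which follow routinely from Proposition~\ref{prop:basic_K(L)}.
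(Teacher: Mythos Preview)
Your proof is correct, but it takes a genuinely different route from the paper's. The paper does not use the nonexpansive retraction $\bar\varphi$ here at all; instead it invokes Lemma~\ref{lem:frame}~(1) with \emph{two} chains --- the support chain $\{p_i\}$ of $x^0$ and the support chain $\{q_i\}$ of the (a priori unknown) minimizer $x^*$ --- to produce a single frame ${\cal F}$ already containing both points. Once inside $K({\cal F})\simeq[0,1]^n$ with $p_i=a_1\vee\cdots\vee a_i$, the paper does not compute the coordinatewise optimum explicitly; it argues by a swap/average contradiction: if $x^0_i>x^0_{i+1}$ but $x^*_i<x^*_{i+1}$, transposing those two coordinates of $x^*$ strictly lowers the $d(\cdot,x^0)^2$ term while leaving $-\sum x_j$ and $\sum x_j^2$ unchanged; if $x^0_i=x^0_{i+1}$ but $x^*_i\neq x^*_{i+1}$, replacing both by their mean strictly lowers the strongly convex terms. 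Your approach trades that case analysis for the heavier machinery of Proposition~\ref{prop:basic_K(L)}~(4) (nonexpansiveness of $\bar\varphi$) together with rank-preservation of $\varphi$, and as a byproduct you obtain the closed-form minimizer and the monotonicity of the clamp map. This is in fact the same retraction-based template the paper later uses for the harder problem P2 (Proposition~\ref{prop:R(x,y)}), so your argument is more uniform with the rest of the paper, while the paper's argument for P1 is more self-contained and avoids citing the retraction properties.
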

\begin{proof}
	Let $x^0 = \sum_{i=0}^n \lambda_i p_i$ for
	the maximal chain $\{p_i\}$ of $\sigma$.
	Let $x^* = \sum_{i} \mu_i q_i$ be the unique minimizer of P1.
	Consider a frame ${\cal F} = \langle a_1,a_2,\ldots,a_n \rangle$ 
	containing chains $\{p_i\}$ and $\{q_i\}$.
	Notice $K({\cal F}) \simeq [0,1]^n$.
	Let $(x^0_1,x^0_2,\ldots,x^0_n)$ and $(x^*_1,x^*_2,\ldots,x^*_n)$ be 
	the ${\cal F}$-coordinates of $x^0$ and $x^*$, respectively. 
	By~(\ref{eqn:F-coordinate}), 
	 it holds $\overline{\dim}(x) = \sum_i x_i$, 
	since $x = \sum_{k=0}^n \lambda_k a_{i_1} \vee a_{i_2} \vee \cdots \vee a_{i_k} \simeq \sum_{k} \lambda_k 1_{\{i_1,i_2,\ldots,i_k\}}$.
	Hence
	the objective function of P1 is written as
	\[
	-  \sum_{i=1}^n x_i + \epsilon \sum_{i=1}^n x_i^2 + \frac{1}{2\lambda} \sum_{i=1}^n (x_i - x^0_i)^2.
	\] 
	We can assume that $p_i = a_1 \vee a_2 \vee \cdots \vee a_i$ by relabeling.
	Then $x^0_1 \geq x^0_2 \geq \cdots \geq x^0_n$.
	Suppose that $x^0_i > x^0_{i+1}$.
	Then $x^*_i \geq x^*_{i+1}$ must hold. 
	If $x^*_i < x^*_{i+1}$, then interchanging the $i$-coordinate and $(i+1)$-coordinate of 
	$x^*$ gives rise to another point in $K({\cal F})$ having a smaller objective value. This is a contradiction to the optimality of $x^*$.
	Suppose that $x^0_i = x^0_{i+1}$. 
	If  $x^*_i \neq x^*_{i+1}$, 
	then replace both $x_i^*$ and $x_{i+1}^*$ by $(x^*_i+x^*_{i+1})/2$ 
	to decrease the objective value, which is a contradiction.
	Thus $x^*_1 \geq x^*_2 \geq \cdots \geq x^*_n$.
	By~(\ref{eqn:recover}), 
	the original coordinate is written as 
	$x^* = (1- x^{*}_1) {\bf 0} + \sum_{i=1}^n (x^*_i - x^*_{i+1}) (a_{1} \vee a_2 \vee \cdots \vee a_i) = \sum_{i} (x^*_i - x^*_{i+1}) p_i$ 
	(with $x_0^* = 1$ and $x^*_{n+1} = 0$). 
	This means that $x^*$ belongs to $\sigma$.
\end{proof}
As in the proof, to solve P1, consider (implicitly) a frame ${\cal F}$ 
containing the chain $\{p_i\}$ for $x^0 = \sum_i \lambda_i p_i$, 
and the following Euclidean convex optimization problem:
\begin{eqnarray*}
{\rm P1': \quad  Min}. && -  \sum_{i=1}^n x_i + \epsilon \sum_{i=1}^n x_i^2 + \frac{1}{2\lambda} \sum_{i=1}^n (x_i - x^0_i)^2 \\
{\rm s.t.} && 0 \leq x_i \leq 1 \quad (1 \leq i \leq n),
\end{eqnarray*}
where $x$ and $x^0$ are represented in the ${\cal F}$-coordinate.
Then the optimal solution $x^*$ of P1$'$ is obtained coordinate-wise. 
Specifically, $x^*_i$ is $0$, $1$, or $(x_i^0 + \lambda)/(1+ 2\epsilon \lambda)$ for each $i$. According to~(\ref{eqn:recover}), the expression in $K({\cal L})$  is recovered.
\begin{Thm}\label{thm:P1}
	The resolvent of $f_i = - \overline{\dim} + (1/4n) d(\mathbf{0}, \cdot)^2$ 
	is computed in polynomial time.
\end{Thm}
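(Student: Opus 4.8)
The plan is to identify the resolvent computation with problem P1 and then to carry out, as polynomial-time steps, the reduction already sketched after the preceding lemma: confine the minimizer to a single maximal simplex, pass to the $\mathcal F$-coordinate of a suitable frame, solve the resulting box-constrained separable quadratic program in closed form, and translate the minimizer back to $K(\mathcal L)$.

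First I would note that, since $f_{m+1}$ depends only on the first component and $f_{m+2}$ only on the second, computing $J_\lambda^{f_{m+1}}$ (resp.\ $J_\lambda^{f_{m+2}}$) reduces to the single-lattice problem P1 over $K(\mathcal L)$ (resp.\ $K(\mathcal M)$), with the constant $\epsilon$ as in the definition of $f_i$. Given the input point $x^0$, I would extend the chain $\supp(x^0)$ to a maximal chain of $\mathcal L$ --- possible by the Jordan--Dedekind chain condition, since $\mathcal L$ has rank $n$ --- so that $x^0$ lies in a maximal simplex $\sigma$. By the lemma above, the unique minimizer $x^*$ of P1 also lies in $\sigma$, so it suffices to optimize over $\sigma$.

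Next I would make the coordinate change explicit. Using Lemma~\ref{lem:frame}(1) with the single maximal chain defining $\sigma$, I obtain a frame $\mathcal F=\langle a_1,\dots,a_n\rangle$ whose Boolean sublattice contains that chain; as remarked after Lemma~\ref{lem:frame}, this amounts to Gaussian elimination and costs a polynomial number of arithmetic operations over $\mathbb K$. In the $\mathcal F$-coordinate (Proposition~\ref{prop:basic_K(L)}(4-1)), $K(\mathcal F)\simeq[0,1]^n$ is an isometric subspace, and on $\sigma$ one has $\overline{\dim}(x)=\sum_i x_i$, $d(\mathbf 0,x)^2=\sum_i x_i^2$, and $d(x,x^0)^2=\sum_i (x_i-x_i^0)^2$, so P1 becomes the Euclidean program P1$'$: minimize $\sum_{i=1}^n\bigl(-x_i+\epsilon x_i^2+\tfrac{1}{2\lambda}(x_i-x_i^0)^2\bigr)$ over $[0,1]^n$. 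This is separable and strictly convex, so its optimum is found coordinate by coordinate: the scalar quadratic $-t+\epsilon t^2+\tfrac{1}{2\lambda}(t-x_i^0)^2$ has unconstrained minimizer $(x_i^0+\lambda)/(1+2\epsilon\lambda)$, hence its minimizer over $[0,1]$ is this value clipped to $[0,1]$, i.e.\ $x_i^*\in\{0,\;1,\;(x_i^0+\lambda)/(1+2\epsilon\lambda)\}$, computed with $O(1)$ arithmetic. Finally I would recover the expression of $x^*$ in $K(\mathcal L)$ from its $\mathcal F$-coordinate via (\ref{eqn:recover}): sort the coordinates in decreasing order and read off the chain and coefficients, at cost $O(n\log n)$. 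Altogether the resolvent is computed in polynomial time.

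There is no real obstacle beyond what the preceding lemma already supplies: the content is precisely that P1 collapses to a box-constrained \emph{separable} quadratic on a single orthoscheme. The only points needing a word of care are (i) that $x^0$ may sit on a proper face, handled by the chain-extension remark above so that the lemma applies verbatim, and (ii) that ``polynomial time'' here is counted in arithmetic operations over $\mathbb K$; control of bit-size when $\mathbb K=\mathbb Q$ is a separate matter, addressed in Section~\ref{sec:p-adic}.
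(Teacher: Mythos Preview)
Your proposal is correct and follows essentially the same approach as the paper: reduce to P1, invoke the preceding lemma to confine the minimizer to the maximal simplex $\sigma$ containing $x^0$, pass to the $\mathcal F$-coordinate where the objective becomes the separable quadratic P1$'$, solve coordinate-wise, and recover via~(\ref{eqn:recover}). The one small difference is that you explicitly construct a frame $\mathcal F$ by Gaussian elimination, whereas the paper stresses that the frame is only needed \emph{implicitly}: once the maximal chain $p_0\prec\cdots\prec p_n$ is fixed, the $\mathcal F$-coordinate of $x^0=\sum_i\lambda_ip_i$ is simply $x^0_j=\sum_{i\ge j}\lambda_i$, and the inverse transform~(\ref{eqn:recover}) rewrites $x^*$ back in the same $p_i$'s without ever touching the atoms $a_i$. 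Your extra step is harmless (still polynomial), just unnecessary.
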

\subsubsection{Computation of the resolvent for $f_i = (2n+1) \overline{R_{i}}$}
Next we consider the computation of 
the resolvent of~$(2n+1) \overline{R_{i}}$. 
It suffices to consider the following problem for $R = R_{A_i}$:
\begin{eqnarray*}	
	{\rm P2: \quad  Min.} && \overline{R}(x,y) +  \frac{1}{2 \lambda} ( d(x,x^0)^2 + d(y,y^0)^2) \\
	{\rm s.t.} && (x,y) \in K({\cal L}) \times K({\cal M}),
\end{eqnarray*}
where $\lambda > 0$, $x^0 \in K({\cal L})$, and $y^0 \in K({\cal M})$.
As in the case of P1, we reduce P2 to a convex optimization 
over $[0,1]^{2n}$
by choosing a special frame $\langle e_1,e_2,\ldots,e_n,f_1,f_2,\ldots,f_n\rangle$ of ${\cal L} \times {\cal M}$.

For $X \in {\cal L}$, let $X^{\bot}$ denote the subspace in ${\cal M}$ defined by
\begin{equation*}
X^{\bot} := \{ y \in \KK^n \mid A_i(x,y) = 0 \  (x \in X) \}.
\end{equation*}
Namely $X^{\bot}$ is the orthogonal subspace of $X$ with respect to the bilinear form $A_i$.
For $Y \in {\cal M}$, let $Y^{\bot} \in {\cal L}$ be defined analogously.
Let $U_0 \in {\cal L}$ and $V_0 \in {\cal M}$ denote the left  and right kernels of $A_i$, respectively:
\begin{eqnarray*}
	U_0 &:= &\{ x \in \KK^n \mid A_i(x,y) = 0 \  (y \in \KK^n) \}. \\ 
	V_0 &:= & \{ y \in \KK^n \mid A_i(x,y) = 0 \  (x \in \KK^n) \}. 
\end{eqnarray*}

Let $k := \rank A_i$.
An {\em orthogonal frame} ${\cal F} = \langle e_1,e_2,\ldots,e_n,f_1,f_2,\ldots,f_n\rangle$
is a frame of ${\cal L} \times {\cal M}$ satisfying the following conditions:
\begin{itemize}
\item $\langle e_1,e_2,\ldots,e_n \rangle$ is a frame of ${\cal L}$.
\item $\langle f_1,f_2,\ldots,f_n\rangle$ is a frame of ${\cal M}$.
\item $e_{k+1} \vee e_{k+2} \vee \cdots \vee e_n = U_0$.
\item $f_1 \vee f_2 \vee \cdots \vee f_k = V_0$ ($\Leftrightarrow$ $f_1 \cap f_2 \cap \cdots \cap f_k = V_0$ ).
\item $f_i = {e_i}^{\bot}$ for $i=1,2,\ldots,k$.
\end{itemize}
Figure~\ref{fig:frame} is an intuitive illustration of an orthogonal frame.
	\begin{figure}[t]
	\begin{center}
		\includegraphics[scale=0.45]{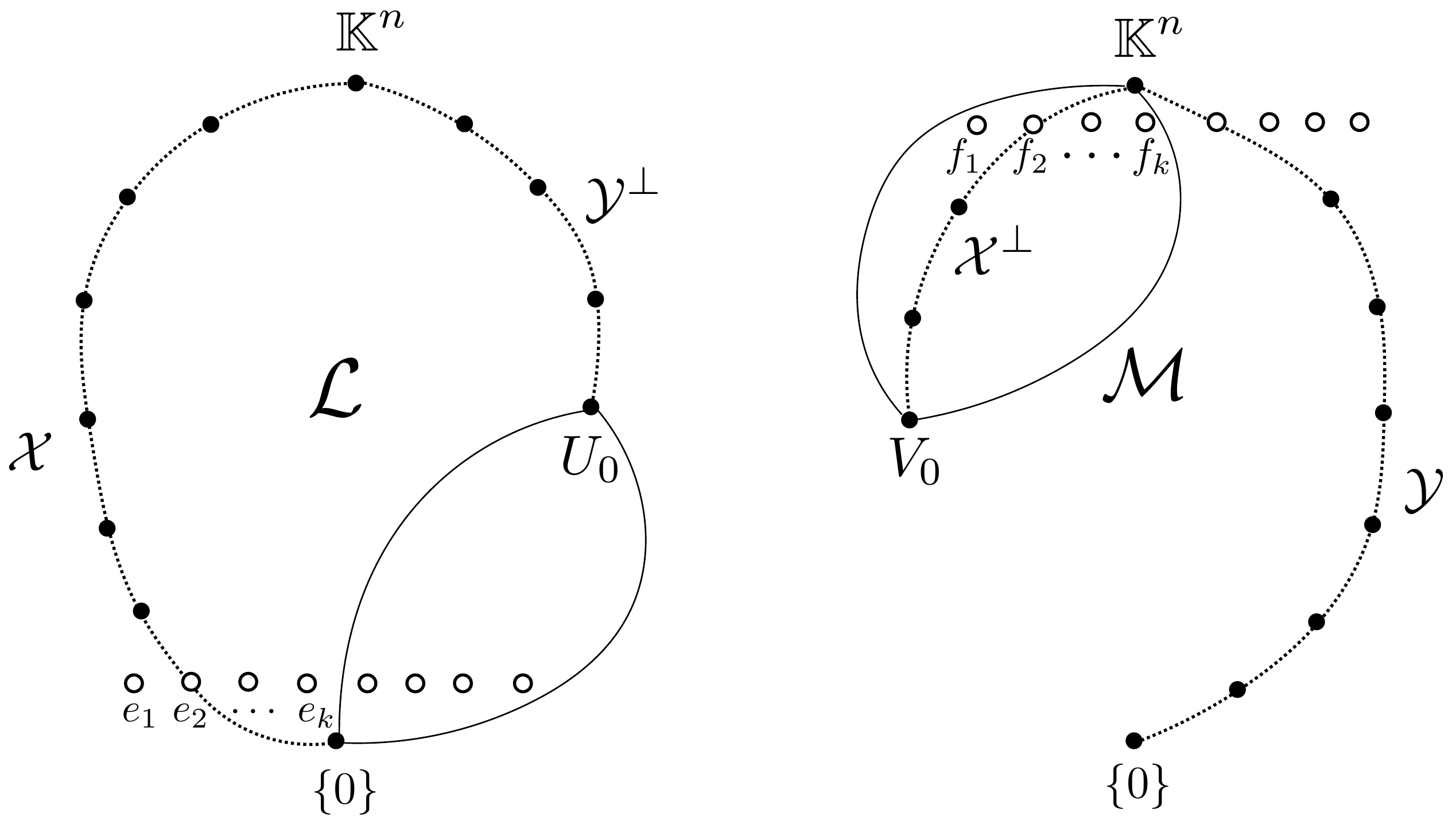}
		\caption{An orthogonal frame}
		\label{fig:frame}
	\end{center}
\end{figure}\noindent

\begin{Prop}\label{prop:Lovasz}
	Let  ${\cal F} = \langle e_1,e_2,\ldots,e_n,f_1,f_2,\ldots,f_n\rangle$ be an orthogonal frame.
	The restriction of the Lov\'asz extension $\overline{R}$ to 
	$K({\cal F}) \simeq [0,1]^n \times [0,1]^n$  can be written as
	\begin{equation}
	\overline{R} (x,y) = \sum_{i=1}^k \max\{0, x_i - y_i\},
	\end{equation}
	where $(x_1,x_2,\ldots,x_n)$ is the $\langle e_1,e_2,\ldots,e_n \rangle$-coordinate of $x$
	and $(y_1,y_2,\ldots,y_n)$ is the $\langle f_1,f_2,\ldots,f_n \rangle$-coordinate of $y$.
\end{Prop}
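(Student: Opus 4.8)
The plan is to evaluate $R$ on the frame $\mathcal{F}$ explicitly and then assemble $\overline{R}|_{K(\mathcal{F})}$ by the layer-cake formula for the Lov\'asz extension on the Boolean lattice $\mathcal{F}\simeq 2^{[2n]}$. Throughout write $A$ for the coefficient matrix $A_i$ in question. First I would fix bases adapted to $\mathcal{F}$. Since $\langle e_1,\dots,e_n\rangle$ is a frame of $\mathcal{L}$, each $e_i$ is a line; choose $u_i\in\KK^n$ spanning $e_i$, so that $u_1,\dots,u_n$ is a basis of $\KK^n$ and $E_I:=\bigvee_{i\in I}e_i=\langle u_i:i\in I\rangle$ for $I\subseteq[n]$. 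Since $\mathcal{M}$ is the subspace lattice with the reverse order, the atoms $f_j$ of the frame $\langle f_1,\dots,f_n\rangle$ are hyperplanes, joins in $\mathcal{M}$ are intersections, and the base condition means that $f_1,\dots,f_n$ are $n$ hyperplanes with $\bigcap_j f_j=\{0\}$; writing $f_j=\ker\phi_j$ with $\phi_j\in(\KK^n)^\ast$, this says $\phi_1,\dots,\phi_n$ is a basis of $(\KK^n)^\ast$. Let $v_1,\dots,v_n\in\KK^n$ be the dual basis. Then $f_j=\langle v_\ell:\ell\ne j\rangle$ and, for $J\subseteq[n]$, the join $F_J:=\bigvee^{\mathcal{M}}_{j\in J}f_j$ equals $\bigcap_{j\in J}f_j=\langle v_\ell:\ell\notin J\rangle$, a subspace of dimension $n-|J|$.

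Next I would read off the matrix $C:=(u_i^\top A\,v_\ell)_{i,\ell\in[n]}$ of the bilinear form $A$ in these bases, using the orthogonal-frame conditions. From $e_{k+1}\vee\cdots\vee e_n=U_0$ we get $u_i^\top A=0$ for $i>k$, so rows $k+1,\dots,n$ of $C$ vanish. For $i\le k$, the condition $f_i=e_i^{\bot}=\{y:A(u_i,y)=0\}=\ker(u_i^\top A)$ together with $f_i\ne\KK^n$ shows that the functional $u_i^\top A$ is nonzero, hence a nonzero scalar multiple of $\phi_i$; therefore $C_{i\ell}=0$ for $\ell\ne i$ and $C_{ii}\ne 0$. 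Thus $C$ is the diagonal matrix with diagonal $(c_1,\dots,c_k,0,\dots,0)$ where $c_1,\dots,c_k\ne 0$. Consequently, for $I,J\subseteq[n]$, the form $A$ restricted to $E_I\times F_J$ has matrix $C[I,[n]\setminus J]$, whose nonzero entries occupy exactly the diagonal positions $(\ell,\ell)$ with $\ell\in(I\cap[k])\setminus J$; these lie in distinct rows and columns, so
\[
R(E_I,F_J)=\rank A|_{E_I\times F_J}=\bigl|(I\cap[k])\setminus J\bigr|=\bigl|\{\ell\in[k]:\ell\in I,\ \ell\notin J\}\bigr|.
\]

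Finally I would invoke the layer-cake formula. Under the isometries $K(\mathcal{F})\simeq K(\langle e_1,\dots,e_n\rangle)\times K(\langle f_1,\dots,f_n\rangle)\simeq[0,1]^n\times[0,1]^n$ of Proposition~\ref{prop:basic_K(L)}~(2),(3), the frame element $E_I$ has $\langle e_1,\dots,e_n\rangle$-coordinate $1_I$, the frame element $F_J$ has $\langle f_1,\dots,f_n\rangle$-coordinate $1_J$, and $\overline{R}|_{K(\mathcal{F})}$ coincides with the ordinary Lov\'asz extension of $R|_{\mathcal{F}}$ (both are computed chainwise, and chains of $\mathcal{F}$ are chains of $\mathcal{L}\times\mathcal{M}$, exactly as in the proof of Proposition~\ref{prop:Lovasz_ext}~(1)). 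Since $R$ vanishes at $(\mathbf{0}_{\mathcal{L}},\mathbf{0}_{\mathcal{M}})=E_\emptyset\times F_\emptyset$, that Lov\'asz extension obeys the layer-cake identity $\overline{R}(z)=\int_0^1 R(\{\text{coordinates of }z\text{ exceeding }t\})\,dt$, whence
\[
\overline{R}(x,y)=\int_0^1 R\bigl(E_{\{i:\,x_i>t\}},\ F_{\{j:\,y_j>t\}}\bigr)\,dt=\int_0^1\bigl|\{\ell\in[k]:y_\ell\le t<x_\ell\}\bigr|\,dt=\sum_{\ell=1}^k\max\{0,\,x_\ell-y_\ell\},
\]
because for each $\ell$ the set $\{t\in[0,1]:y_\ell\le t<x_\ell\}$ has length $x_\ell-y_\ell$ when $x_\ell>y_\ell$ and is empty otherwise. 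Relabeling $\ell$ as $i$ gives the asserted formula.

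The main obstacle I anticipate is bookkeeping around the reversed order on $\mathcal{M}$: one must consistently keep that atoms of $\mathcal{M}$ are hyperplanes, that $F_J=\bigvee^{\mathcal{M}}_{j\in J}f_j$ is $\langle v_\ell:\ell\notin J\rangle$ rather than $\langle v_\ell:\ell\in J\rangle$, and that $R$ of a pair of frame elements is the rank of the submatrix $C[I,[n]\setminus J]$ (not of its transpose nor of the complementary submatrix), together with matching the $\langle f_1,\dots,f_n\rangle$-coordinate convention to the layer-cake indexing. Once the adapted bases are in place and $C$ is identified as the displayed diagonal matrix, the remaining computations are routine.
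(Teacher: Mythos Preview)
Your proof is correct, and it reaches the same key intermediate identity as the paper, namely $R(E_I,F_J)=|(I\cap[k])\setminus J|$, but the two derivations differ in flavor. You work in explicit coordinates: you pick adapted bases $u_1,\dots,u_n$ and $v_1,\dots,v_n$, compute the Gram-type matrix $C=(u_i^\top A v_\ell)$, and read off that $C$ is diagonal with nonzero entries exactly in positions $1,\dots,k$; the rank formula on frame elements then drops out of the submatrix $C[I,[n]\setminus J]$. The paper instead stays at the lattice level: it uses the identities $e_i^{\bot}=f_i$ for $i\le k$ and $e_i^{\bot}=\KK^n$ for $i>k$, together with Lemma~\ref{lem:bot}~(2) and the formula $R(X,Y)=\dim Y-\dim(Y\cap X^{\bot})$ from Lemma~\ref{lem:formula_R}, to obtain the same combinatorial expression without ever writing down a matrix. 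For the final step you invoke the layer-cake integral for the Lov\'asz extension on $[0,1]^{2n}$, whereas the paper simply observes that $R|_{\mathcal{F}}$ decomposes as a sum of two-variable functions $(x_i,y_i)\mapsto\max\{0,x_i-y_i\}$ and that each such function coincides with its own Lov\'asz extension on $[0,1]^2$. Your route is more self-contained and makes the linear algebra vivid; the paper's route is shorter once Lemmas~\ref{lem:bot} and~\ref{lem:formula_R} are in hand and keeps the argument uniform with the surrounding lattice-theoretic framework. A minor remark: the hypothesis $R(E_\emptyset,F_\emptyset)=0$ that you flag before the layer-cake step is in fact unnecessary, since for $z\in[0,1]^{2n}$ the identity $\overline{f}(z)=\int_0^1 f(\{i:z_i>t\})\,dt$ already holds for any $f$ on $2^{[2n]}$ (the contribution of $f(\emptyset)$ is absorbed by the interval $[\max_i z_i,1]$ and matches the coefficient in the simplicial expression).
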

\begin{Prop}\label{prop:orthogonal_frame}
	Let ${\cal X}$ and ${\cal Y}$ be maximal chains of ${\cal L}$ and ${\cal M}$, respectively. 
	Then there exists an orthogonal frame ${\cal F} = \langle e_1,e_2,\ldots,e_n,f_1,f_2,\ldots,f_n\rangle$ satisfying
	\begin{equation}\label{eqn:XuYbot}
	{\cal X} \cup {\cal Y}^{\bot} \subseteq 
	\langle e_1,e_2,\ldots,e_n \rangle,\   {\cal X}^{\bot} \cup {\cal Y} \subseteq 
	\langle f_1,f_2,\ldots,f_n \rangle.  
	\end{equation}
	Such a frame can be found in polynomial time.	
\end{Prop}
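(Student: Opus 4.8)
The plan is to build the left half $\langle e_1,\ldots,e_n\rangle\subseteq\mathcal{L}$ of the frame first and the right half $\langle f_1,\ldots,f_n\rangle\subseteq\mathcal{M}$ afterwards, forcing the required chain memberships as we go. First I would note that $\bot$ is order preserving as a map $\mathcal{M}\to\mathcal{L}$ (it reverses inclusion and $\mathcal{M}$ already carries the reverse inclusion order), so $\mathcal{Y}^{\bot}$ is a chain of $\mathcal{L}$; applying Lemma~\ref{lem:frame}~(1) to the chains $\mathcal{X}$ and $\mathcal{Y}^{\bot}$ yields a base $\{e_1,\ldots,e_n\}$ of $\mathcal{L}$ with $\mathcal{X}\cup\mathcal{Y}^{\bot}\subseteq\langle e_1,\ldots,e_n\rangle$, which can be carried out in polynomial time by Gaussian elimination (see the remark after Lemma~\ref{lem:frame}). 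The left kernel $U_0=(\KK^n)^{\bot}$ is the smallest element of $\mathcal{Y}^{\bot}$, hence a coordinate subspace, and $\dim U_0=n-k$; after relabeling the $e_i$ (which preserves the memberships just obtained) I may assume $U_0=e_{k+1}\vee\cdots\vee e_n$. Then $e_i\not\preceq U_0$ for $i\le k$, so each $f_i:=e_i^{\bot}$ $(i\le k)$ is an atom of $\mathcal{M}$, and $f_1\vee\cdots\vee f_k=(e_1\vee\cdots\vee e_k)^{\bot}=V_0$, since any $W$ with $W\vee U_0=\KK^n$ satisfies $W^{\bot}=V_0$. At this stage every defining property of an orthogonal frame holds except that $\langle f_1,\ldots,f_n\rangle$ is a base of $\mathcal{M}$ and the memberships $\mathcal{X}^{\bot}\cup\mathcal{Y}\subseteq\langle f_1,\ldots,f_n\rangle$.

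The membership of $\mathcal{X}^{\bot}$ will be automatic once $f_1,\ldots,f_k$ has been completed to a base of $\mathcal{M}$: for $X=e_{i_1}\vee\cdots\vee e_{i_r}\in\mathcal{X}$ every index $i_j>k$ has $e_{i_j}\preceq U_0$ and contributes nothing to the orthogonal complement, so $X^{\bot}=\bigcap_{i_j\le k}f_{i_j}$, a coordinate subspace of $\langle f_1,\ldots,f_n\rangle$.

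The remaining task---completing $f_1,\ldots,f_k$ to atoms $f_1,\ldots,f_n$ that form a base of $\mathcal{M}$ with $\mathcal{Y}\subseteq\langle f_1,\ldots,f_n\rangle$---is the only delicate point, and I expect it to be the main obstacle: the first $k$ atoms are already prescribed, and a prescribed partial base of a modular lattice need not extend to a base compatible with an arbitrary maximal chain. The extension will succeed precisely because in the first step we arranged $\mathcal{Y}^{\bot}\subseteq\langle e_1,\ldots,e_n\rangle$, not merely $\mathcal{X}\subseteq\langle e_1,\ldots,e_n\rangle$. To make this transparent I would dualize to $(\KK^n)^{*}$: bases of $\mathcal{M}$ correspond to bases of $(\KK^n)^{*}$, the chain $\mathcal{Y}$ to a complete flag $\{Y^{\circ}\}_{Y\in\mathcal{Y}}$, the atoms $f_1,\ldots,f_k$ to linearly independent functionals spanning the annihilator $D:=V_0^{\circ}$, and $A_i$ induces an isomorphism $\KK^n/U_0\to D$ sending $e_i+U_0$ to the functional cutting out $f_i$ $(i\le k)$ and sending the flag $\mathcal{Y}^{\bot}/U_0$ onto the flag $\{Y^{\circ}\cap D\}$ that $\mathcal{Y}$ induces on $D$. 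Since every element of $\mathcal{Y}^{\bot}$ contains $U_0$, the condition $\mathcal{Y}^{\bot}\subseteq\langle e_1,\ldots,e_n\rangle$ (with $U_0=e_{k+1}\vee\cdots\vee e_n$) translates exactly into the statement that the functionals cutting out $f_1,\ldots,f_k$ form a basis of $D$ adapted to the flag $\{Y^{\circ}\cap D\}$, i.e.\ that each $Y^{\circ}\cap D$ is spanned by a prefix of them in a suitable ordering. It then remains to invoke the elementary fact that a basis of a subspace $D$ adapted to the flag induced on $D$ by a complete flag $G_{\bullet}$ of the ambient space extends to a basis of the ambient space adapted to $G_{\bullet}$ (place the given vectors at the positions where $\dim(G_i\cap D)$ jumps, and fill the remaining positions greedily with vectors of $G_i$); dualizing the added basis vectors yields $f_{k+1},\ldots,f_n$. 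Every step is Gaussian elimination together with Lemma~\ref{lem:frame}, so the orthogonal frame is produced in polynomial time. The part that needs real care in a full write-up is the bookkeeping around the two kernels $U_0,V_0$ and the translation through the perfect pairing $\KK^n/U_0\to V_0^{\circ}$ induced by $A_i$, rather than any single hard estimate.
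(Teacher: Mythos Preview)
Your proof is correct and follows the same overall strategy as the paper: build the $e$-frame from $\mathcal{X}\cup\mathcal{Y}^{\bot}$ via Lemma~\ref{lem:frame}(1), normalize so that $U_0=e_{k+1}\vee\cdots\vee e_n$, set $f_i:=e_i^{\bot}$ for $i\le k$, and then extend $f_1,\ldots,f_k$ to a full base of $\mathcal{M}$ containing $\mathcal{Y}$. The only difference is in how the extension step is carried out. The paper stays in lattice language: it considers the chain $\mathcal{Y}^{\bot\bot}\subseteq\langle f_1,\ldots,f_k\rangle$ and walks along $\mathcal{Y}$, observing that whenever $Y_{i-1}^{\bot\bot}\prec: Y_i^{\bot\bot}$ some existing $f_j$ already satisfies $Y_i=Y_{i-1}\vee f_j$, and whenever $Y_{i-1}^{\bot\bot}=Y_i^{\bot\bot}$ a fresh atom is appended. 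Your dual-space translation is the same argument in disguise: the isomorphism $\KK^n/U_0\to V_0^{\circ}$ induced by $A_i$ identifies $Y^{\bot}/U_0$ with $Y^{\circ}\cap V_0^{\circ}$, so the paper's $Y^{\bot\bot}$ is exactly your $Y^{\circ}\cap D$ read back in $\mathcal{M}$, and ``adapted basis extends along a flag'' is precisely the step-by-step atom selection. Your version has the advantage of making the extension a one-line appeal to a standard linear-algebra fact; the paper's version avoids introducing the dual space and keeps everything inside the modular-lattice framework used elsewhere.
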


%Figure~\ref{fig:A-ortho} illustrates an orthogonal frame in this theorem.
 \begin{Prop}\label{prop:R(x,y)}
	 Let ${\cal X}$ and ${\cal Y}$ 
	 be maximal chains corresponding to 
	 maximal simplices containing $x^0$ and $y^0$, respectively.
    For an orthogonal frame ${\cal F}$ satisfying $(\ref{eqn:XuYbot})$, 
    the minimizer $(x^*,y^*)$ of P2 exists in $K({\cal F})$.
\end{Prop}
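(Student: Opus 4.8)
The plan is to mimic the structure of the proof for Problem~P1: first show that the minimizer of P2 can be confined to a simplex-like region obtained from the orthogonal frame $\mathcal{F}$, and then reduce P2 to a Euclidean convex optimization over $[0,1]^{2n}$ where the confinement follows from an exchange argument. The key observation is that, by Proposition~\ref{prop:basic_K(L)}~(4), $K(\mathcal{F})$ is an isometric subspace of $K(\mathcal{L}) \times K(\mathcal{M})$, and by Proposition~\ref{prop:basic_K(L)}~(4) there is a nonexpansive retraction $\bar\varphi: K(\mathcal{L})\times K(\mathcal{M}) \to K(\mathcal{F})$. Moreover, since $\mathcal{F}$ satisfies~(\ref{eqn:XuYbot}), the retraction $\bar\varphi$ fixes $x^0$ and $y^0$: indeed $x^0$ lies in the simplex spanned by $\mathcal{X}$, whose elements all lie in $\langle e_1,\ldots,e_n\rangle$, so $\varphi$ fixes each vertex of that simplex, hence $\bar\varphi$ fixes $x^0$; similarly for $y^0$. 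Since $\bar\varphi$ is nonexpansive and fixes $x^0,y^0$, we get $d(\bar\varphi(x,y),(x^0,y^0)) \le d((x,y),(x^0,y^0))$ for all $(x,y)$.

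Next I would control the $\overline{R}$ term under the retraction. By Proposition~\ref{prop:Lovasz_ext}~(2) combined with the construction, $\overline{R}$ is a convex function; but I actually need that $\overline{R}(\bar\varphi(x,y)) \le \overline{R}(x,y)$, i.e.\ that the canonical retraction onto this particular frame does not increase $\overline{R}$. This should follow from the fact that $\varphi_{e_1,\ldots,e_n,f_1,\ldots,f_n}$ is rank-preserving and order-preserving (Lemma~\ref{lem:frame}~(2)), so that it maps each chain in $\mathcal{L}\times\mathcal{M}$ to a chain in $\mathcal{F}$ of the same length, and $R$ restricted to any frame is a matroid-type rank function whose value can only decrease under such coordinate projections; more concretely, using Proposition~\ref{prop:Lovasz}, on $K(\mathcal{F})$ the extension is $\sum_{i=1}^k \max\{0,x_i-y_i\}$, and one checks directly that the $\mathcal{F}$-coordinates of $\bar\varphi(p,q)$ for a vertex $(p,q)$ of the supporting simplex are obtained by the retraction~(\ref{eqn:retraction}), which can only shrink each $\max\{0,x_i-y_i\}$ contribution. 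Granting these two facts, $\bar\varphi$ does not increase the objective of P2, so the minimizer—unique by strong convexity—must already lie in $K(\mathcal{F})$.

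Finally, with the minimizer confined to $K(\mathcal{F}) \simeq [0,1]^n \times [0,1]^n$, Proposition~\ref{prop:Lovasz} turns P2 into the Euclidean problem of minimizing $\sum_{i=1}^k \max\{0,x_i - y_i\} + \tfrac{1}{2\lambda}\sum_{i=1}^n (x_i - x^0_i)^2 + \tfrac{1}{2\lambda}\sum_{i=1}^n (y_i - y^0_i)^2$ over $[0,1]^{2n}$, and I will argue (exactly as in the P1 argument, via an interchange/averaging exchange argument on coordinate pairs $i$ where the $x^0$- or $y^0$-coordinates are tied or ordered) that the minimizer $(x^*,y^*)$ lies in the sub-simplex determined by the chains $\mathcal{X},\mathcal{Y}$, hence the recovery formula~(\ref{eqn:recover}) gives a point of $K(\mathcal{F})$ inside the maximal simplices containing $x^0,y^0$. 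I expect the main obstacle to be the second step: verifying cleanly that the canonical retraction onto the orthogonal frame $\mathcal{F}$ (which was built to be adapted to the bilinear form $A_i$, not merely to the chains) does not increase $\overline{R}$. The cleanest route is probably to bypass a direct argument on $K(\mathcal{L})\times K(\mathcal{M})$ and instead invoke convexity of $\overline{R}$ together with the fact that $\bar\varphi$ restricted to any single maximal simplex is an affine (coordinate-projection) map fixing $x^0,y^0$, so that $\overline{R}\circ\bar\varphi$ is convex and agrees with $\overline{R}$ on $K(\mathcal{F})$; combined with the nonexpansiveness bound on the quadratic terms, a standard convexity argument then places the minimizer in $K(\mathcal{F})$.
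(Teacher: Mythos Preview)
Your high-level plan is correct and matches the paper: use a nonexpansive retraction onto $K(\mathcal{F})$ that fixes $(x^0,y^0)$, so the quadratic terms do not increase, and show separately that $\overline{R}$ does not increase under the retraction. The distance part is exactly as you say. The last paragraph about an exchange argument confining the minimizer to a sub-simplex is unnecessary: the proposition only asserts membership in $K(\mathcal{F})$, and once the retraction argument goes through the uniqueness of the minimizer finishes it.

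The genuine gap is the step you yourself flag as ``the main obstacle,'' namely $\overline{R}(\bar\varphi(x,y)) \le \overline{R}(x,y)$. Neither of your two proposed routes works as stated. Route~2 (``convexity of $\overline{R}$ plus affineness of $\bar\varphi$ on simplices plus a standard convexity argument'') does not yield the pointwise inequality: convexity of $\overline{R}$ gives you nothing about $\overline{R}\circ\bar\varphi$ versus $\overline{R}$ unless the retraction is an averaging map, which it is not. Route~1 is the right direction but you have not done the work: the claim that the canonical retraction ``can only shrink each $\max\{0,x_i-y_i\}$ contribution'' is exactly the vertex inequality $R(\varphi(X),\phi(Y)) \le R(X,Y)$, and this is \emph{false} for a generic ordering of the basis in~(\ref{eqn:retraction}). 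The paper makes a specific, non-obvious choice: it takes the product of two separate retractions $\varphi = \varphi_{e_n,\ldots,e_{k+1},e_1,\ldots,e_k}$ on $\mathcal{L}$ and $\phi = \varphi_{f_1,\ldots,f_n}$ on $\mathcal{M}$ (note the ordering of the $e$'s, which puts the kernel part first). With this choice one proves the key compatibility $(\phi(Y))^{\bot} = \varphi(Y^{\bot})$ by a direct lattice computation using Lemma~\ref{lem:bot}(4). Then the formula $R(X,Y) = \dim X - \dim(X \wedge Y^{\bot})$ from Lemma~\ref{lem:formula_R}, together with the rank- and order-preserving properties of $\varphi$, gives
\[
R(\varphi(X),\phi(Y)) = r(\varphi(X)) - r(\varphi(X)\wedge \varphi(Y^{\bot})) \le r(X) - r(\varphi(X\wedge Y^{\bot})) = R(X,Y).
\]
This commutation identity is the missing ingredient in your proposal; without specifying the basis orderings and proving it, the argument does not close.
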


The above three propositions are proved in Section~\ref{subsub:proof}.
Assuming these, we proceed the computation of the resolvent.
For an orthogonal frame satisfying~(\ref{eqn:XuYbot}), the problem P2 is equivalent to
\begin{eqnarray*}	
	{\rm P2': \quad  Min.} && \sum_{i=1}^k \max\{0,x_i - y_i \} + \frac{1}{2 \lambda} \left\{\sum_{i=1}^m (x_i -x^0_i)^2 + \sum_{i=1}^n (y_i - y^0_i)^2 \right\} \\
	{\rm s.t.} && 0 \leq x_i \leq 1, 0 \leq y_i \leq 1  \quad (0 \leq i \leq n).
\end{eqnarray*}
Again this problem is easily solved coordinate-wise.
Obviously $x^*_i = x^0_i$ and $y^*_i = y^0_i$ for $i > k$.
For $i \leq k$,  
$(x^*_i,y^*_i)$ 
is the minimizer of the $2$-dimensional problem.
Obviously this can be solved in constant time.
%
%
%Thus we can solve P2 as follows. Choose 
%an orthogonal frame ${\cal F}$ satisfying (\ref{eqn:XuYbot}), 
%solve P2$'$ to obtain the minimizer $(x^*,y^*) \in [0,1]^n \times [0,1]^n$,  and recover $(x^*,y^*)$ in $K({\cal L}) \times K({\cal M})$.  
%
\begin{Thm}\label{thm:P2}
	The resolvent of $f_i = (2n+1)\overline{R_i}$ is computed in polynomial time.
\end{Thm}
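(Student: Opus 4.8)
The plan is to reduce the computation of the resolvent $J_\lambda^{f_i}$ to the concrete Euclidean program P2$'$ via the three propositions stated above, and then to solve P2$'$ in closed form coordinate-wise. Since $f_i = (2n+1)\overline{R_i}$, the resolvent of $f_i$ at a point $(x^0,y^0)$ is the minimizer of $(2n+1)\overline{R_i}(x,y) + \frac{1}{2\lambda}\left(d(x,x^0)^2 + d(y,y^0)^2\right)$, where we used Proposition~\ref{prop:basic_K(L)}~(2) to split the squared distance on the product space. Dividing by $2n+1$ (equivalently, replacing $\lambda$ by $(2n+1)\lambda$) turns this into exactly an instance of P2 for $R = R_{A_i}$, so it suffices to solve P2 in polynomial time.

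First I would locate maximal simplices of $K({\cal L})$ and $K({\cal M})$ containing $x^0$ and $y^0$; their supports are maximal chains ${\cal X}$ and ${\cal Y}$, each given by matrices whose images (or kernels) are the subspaces in the chain. Invoking Proposition~\ref{prop:orthogonal_frame}, I would then compute, in polynomial time, an orthogonal frame ${\cal F} = \langle e_1,\ldots,e_n,f_1,\ldots,f_n\rangle$ satisfying~(\ref{eqn:XuYbot}). By Proposition~\ref{prop:R(x,y)}, the unique minimizer $(x^*,y^*)$ of P2 lies in $K({\cal F}) \simeq [0,1]^n \times [0,1]^n$, and by Proposition~\ref{prop:Lovasz} the restriction of $\overline{R}$ to $K({\cal F})$ is $\sum_{i=1}^k \max\{0,x_i-y_i\}$ in ${\cal F}$-coordinates. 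Expressing $x^0$ and $y^0$ in these coordinates requires only Gaussian elimination to write the chain elements in terms of the frame, followed by reading off the piecewise-linear coordinates (the sorting procedure behind~(\ref{eqn:recover})). With this, P2 becomes the box-constrained strictly convex program P2$'$.

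The last step is to solve P2$'$. Its objective is separable: for $i > k$ it reduces to minimizing $(x_i - x^0_i)^2$ and $(y_i - y^0_i)^2$ on $[0,1]$, so $x^*_i = x^0_i$ and $y^*_i = y^0_i$; for $i \le k$ it is the two-variable strictly convex problem of minimizing $\max\{0,x_i-y_i\} + \frac{1}{2\lambda}\left((x_i-x^0_i)^2 + (y_i-y^0_i)^2\right)$ over $[0,1]^2$, whose minimizer is obtained in $O(1)$ arithmetic operations (compare the unconstrained optima on the two linear pieces $x_i \ge y_i$ and $x_i \le y_i$ and on the diagonal $x_i = y_i$, then clip to the box). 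Assembling the coordinate-wise optima gives $(x^*,y^*)$ in ${\cal F}$-coordinates, and one final application of~(\ref{eqn:recover}), together with the inverse of the isometry $K({\cal L}\times{\cal M}) \simeq K({\cal L})\times K({\cal M})$ described after Proposition~\ref{prop:basic_K(L)}, recovers the resolvent as a point of $K({\cal L}\times{\cal M})$. Every step uses only a polynomial number of field operations.

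I expect all genuine difficulty to reside in Propositions~\ref{prop:Lovasz}, \ref{prop:orthogonal_frame}, and~\ref{prop:R(x,y)}, which are proved separately; granted those, the present argument is a bookkeeping reduction to a constant-size convex subproblem. The one point requiring care is that the orthogonal frame, the chain representations, and the coordinate conversions must all be realized by explicit linear algebra over $\KK$, so that ``polynomial time'' is literally a polynomial bound on the number of arithmetic operations; the separate issue of bit-size growth over $\QQ$ is deferred to Section~\ref{sec:p-adic}.
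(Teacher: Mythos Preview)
Your proposal is correct and follows essentially the same route as the paper: reduce the resolvent computation to P2, invoke Propositions~\ref{prop:Lovasz}, \ref{prop:orthogonal_frame}, and~\ref{prop:R(x,y)} to pass to the Euclidean program P2$'$ on $[0,1]^{2n}$, and solve P2$'$ coordinate-wise (trivially for $i>k$, via a constant-size two-variable problem for $i\le k$). The paper's argument is terser but identical in substance; your added remarks on coordinate conversion and the rescaling $\lambda \leftarrow (2n+1)\lambda$ are accurate elaborations.
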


\begin{Rem}[Bit complexity]\label{rem:bit}
	In the above SPPA, 
	the required bit-length for coefficients of $z \in K({\cal L} \times {\cal M})$ 
	is bounded polynomially in $n,m$. Indeed,
	the transformation between the original coordinate and an ${\cal F}$-coordinate
	corresponds to multiplying a triangular matrix consisting of $0,\pm 1$ entries; see (\ref{eqn:recover}). 
	In each iteration $k$,
	the optimal solution of quadratic problem P1$'$ or P2$'$ 
	is obtained by 
	adding (fixed) rational functions in $n,m,k$ 
	to (current points) $x_i^0, y_i^0$ 
	and multiplying a (fixed) $2 \times 2$ rational matrix in $n,m,k$.   
	Consequently, the bit increase is polynomially bounded.

	On the other hand, in the case of $\KK = \QQ$, we could not exclude the possibility of an exponential increase of the bit-length for the basis of a vector subspace appearing in the algorithm. 
\end{Rem}

\subsubsection{Proofs of Propositions~\ref{prop:Lovasz}, \ref{prop:orthogonal_frame}, and \ref{prop:R(x,y)}}\label{subsub:proof}
We start with basic properties of $(\cdot)^{\bot}$, which follow from elementary linear algebra.
\begin{Lem}\label{lem:bot}
	\begin{itemize}
		\item[{\rm (1)}] If $X \subseteq X'$, then $X^{\bot} \supseteq {X'}^{\bot}$ and $\dim {X}^{\bot} - \dim {X'}^{\bot} \leq \dim X'-\dim X$.
%		$
%		r(X^\bot) - r({X'}^{\bot}) \leq r(X') - r(X).
%		$
		\item[{\rm (2)}] $(X + X')^{\bot} = X^{\bot} \cap {X'}^{\bot}$. 
		\item[{\rm (3)}] $X^{\bot \bot} \supseteq X$.
		\item[{\rm (4)}] $X \mapsto X^{\bot}$ 
		induces an isomorphism between $[U_0, \KK^n]$ and 
		$[\KK^n,V_0]$ with inverse $Y \mapsto Y^{\bot}$. In particular, 
		$X^{\bot \bot \bot} = X^{\bot}$.
	\end{itemize}
\end{Lem}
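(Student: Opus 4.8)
The plan is to translate $(\cdot)^{\bot}$ into the language of the transpose matrix and then use only elementary linear algebra. Fix $A := A_{i}$, and for a subspace $S \subseteq \KK^{n}$ write $S^{\circ} := \{y \in \KK^{n} : z^{\top} y = 0 \text{ for all } z \in S\}$ for the ordinary orthogonal complement; recall that $S \mapsto S^{\circ}$ is an inclusion-reversing involution with $\dim S^{\circ} = n - \dim S$ that carries sums to intersections. Since $x^{\top} A y = (A^{\top} x)^{\top} y$, one has $X^{\bot} = (A^{\top} X)^{\circ}$ for $X \in {\cal L}$ and, symmetrically, $Y^{\bot} = (A Y)^{\circ}$ for $Y \in {\cal M}$; moreover $U_{0} = \ker A^{\top}$, $V_{0} = \ker A$, and $\dim U_{0} = \dim V_{0} = n - \rank A$. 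Also $V_{0} \subseteq X^{\bot}$ for \emph{every} $X$, because each element of $V_{0} = \ker A$ pairs to zero with all of $\KK^{n}$.

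Parts (1)--(3) are then immediate. For (1): $X \subseteq X'$ gives $A^{\top} X \subseteq A^{\top} X'$, hence $X^{\bot} \supseteq X'^{\bot}$ after applying $(\cdot)^{\circ}$; and $\dim X^{\bot} - \dim X'^{\bot} = \dim(A^{\top} X') - \dim(A^{\top} X) \le \dim X' - \dim X$, the inequality because $A^{\top}$ maps $X'$ onto $A^{\top} X'$ carrying $X$ into $A^{\top} X$, so it induces a surjection $X'/X \twoheadrightarrow (A^{\top} X')/(A^{\top} X)$. For (2): $A^{\top}(X + X') = A^{\top} X + A^{\top} X'$, and $(\cdot)^{\circ}$ turns this sum into $X^{\bot} \cap X'^{\bot}$. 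For (3): if $x \in X$ then $x^{\top} A y = 0$ for all $y \in X^{\bot}$ by definition of $X^{\bot}$, which is exactly the statement $x \in X^{\bot\bot}$.

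The one substantive item is (4), and there the only real care needed is that ${\cal M}$ is ordered by reverse inclusion. Restricted to the interval $[U_{0}, \KK^{n}]$ of ${\cal L}$, the map $X \mapsto A^{\top} X$ is an inclusion-preserving bijection onto the lattice of all subspaces of $\mathrm{Im}\, A^{\top}$, since $A^{\top}$ induces an isomorphism $\KK^{n}/U_{0} \cong \mathrm{Im}\, A^{\top}$ (its inverse being $T \mapsto (A^{\top})^{-1}(T)$). Post-composing with $(\cdot)^{\circ}$ gives an inclusion-\emph{reversing} bijection onto $\{Y : V_{0} \subseteq Y \subseteq \KK^{n}\}$, because $(\mathrm{Im}\, A^{\top})^{\circ} = \ker A = V_{0}$. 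As the order on ${\cal M}$ reverses inclusion, the composite $X \mapsto X^{\bot}$ is thus an order isomorphism from the interval $[U_{0}, \KK^{n}]$ of ${\cal L}$ onto the interval $[\KK^{n}, V_{0}]$ of ${\cal M}$. Running the identical argument with $A$ in place of $A^{\top}$ yields an order isomorphism $[\KK^{n}, V_{0}] \to [U_{0}, \KK^{n}]$ given by $Y \mapsto Y^{\bot}$; to see the two are mutually inverse, note that for $X \in [U_{0}, \KK^{n}]$ we have $\dim X^{\bot} = n - \dim(A^{\top} X) = n - \dim X + \dim U_{0}$, hence $\dim X^{\bot\bot} = n - \dim X^{\bot} + \dim V_{0} = \dim X$ using $\dim U_{0} = \dim V_{0}$, and combined with $X^{\bot\bot} \supseteq X$ from (3) this forces $X^{\bot\bot} = X$ (and symmetrically $Y^{\bot\bot} = Y$). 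Finally, for arbitrary $X \in {\cal L}$ the subspace $X^{\bot}$ lies in $[\KK^{n}, V_{0}]$ as noted, so applying $Y^{\bot\bot} = Y$ to $Y = X^{\bot}$ gives $X^{\bot\bot\bot} = X^{\bot}$. No step is genuinely hard; the only thing to watch is keeping the reversed order on ${\cal M}$ straight and confirming the symmetric identity $\dim U_{0} = \dim V_{0} = n - \rank A$, which is precisely what makes the involution dimension-preserving on the relevant intervals.
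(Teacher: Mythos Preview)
Your proof is correct. The paper itself offers no argument here beyond the remark that the lemma ``follow[s] from elementary linear algebra,'' and your proposal supplies precisely such an elementary verification---translating $X^{\bot}$ as $(A^{\top}X)^{\circ}$ and then reading off (1)--(3) directly, with (4) handled by the standard rank-nullity/dimension count combined with (3)---so there is nothing to compare against and nothing to fix.
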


An alternative expression of $R$ by using $(\cdot)^{\bot}$ is given.
\begin{Lem}\label{lem:formula_R}
	$R(X,Y) = \dim Y - \dim Y \cap X^{\bot} = \dim X - \dim X \cap Y^{\bot}$.
\end{Lem}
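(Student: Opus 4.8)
The plan is to interpret the restricted bilinear form $A_i|_{X\times Y}$ as a linear map and read off both equalities from the rank--nullity theorem. Write $B := A_i|_{X\times Y}$ for brevity, and recall that $R(X,Y) = \rank B$ is by definition the rank of any matrix representing $B$ with respect to bases of $X$ and $Y$; equivalently, it is the rank of either of the two associated linear maps $\psi\colon X \to Y^{*}$, $x \mapsto A_i(x,\cdot)|_{Y}$, and $\phi\colon Y \to X^{*}$, $y \mapsto A_i(\cdot,y)|_{X}$, since the matrices of $\psi$ and $\phi$ (in dual bases) are transposes of each other and hence have equal rank.

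First I would compute $\ker\phi$. For $y\in Y$ we have $\phi(y)=0$ if and only if $A_i(x,y)=0$ for all $x\in X$, i.e.\ if and only if $y\in X^{\bot}$; hence $\ker\phi = Y\cap X^{\bot}$. Applying rank--nullity to $\phi\colon Y\to X^{*}$ gives $R(X,Y)=\rank\phi = \dim Y - \dim(Y\cap X^{\bot})$.

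Symmetrically, for $x\in X$ we have $\psi(x)=0$ if and only if $A_i(x,y)=0$ for all $y\in Y$, i.e.\ if and only if $x\in Y^{\bot}$; hence $\ker\psi = X\cap Y^{\bot}$, and rank--nullity for $\psi\colon X\to Y^{*}$ yields $R(X,Y)=\rank\psi = \dim X - \dim(X\cap Y^{\bot})$. Combining the two identities proves the lemma.

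The argument is entirely elementary linear algebra, so I do not anticipate any genuine obstacle; the only point requiring a moment's care is matching the two sides of the definitions $X^{\bot}=\{y : A_i(x,y)=0\ (x\in X)\}$ and $Y^{\bot}=\{x : A_i(x,y)=0\ (y\in Y)\}$ to the correct ``one-sided kernel'' of the restricted form, which is exactly what the computations of $\ker\phi$ and $\ker\psi$ accomplish.
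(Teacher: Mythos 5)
Your proof is correct and is essentially the paper's argument in coordinate-free form: the paper fixes a basis of $Y$ adapted to $Y\cap X^{\bot}$ and reads the rank off the resulting matrix (zero columns on the $Y\cap X^{\bot}$ part, independent columns on a complement), which is exactly your rank--nullity computation for the induced map $\phi\colon Y\to X^{*}$ with $\ker\phi = Y\cap X^{\bot}$. The only difference is cosmetic: you get the second equality symmetrically via $\psi\colon X\to Y^{*}$, where the paper states it is ``obtained similarly.''
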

\begin{proof}
	Consider bases $\{a_1,a_2,\ldots,a_{\ell}\}$ of $X$ and $\{ b_1,b_2,\ldots,b_{\ell'}\}$ of $Y$.
	We can assume that
	$\{a_{k'+1},a_{k'+2},\ldots,a_{\ell'}\}$ is a base of 
	$Y \cap X^{\bot}$.
	Consider the matrix representation $(A_i (a_{i'},b_{j'}))$ of $A_i|_{X \times Y}$ 
	with respect to these bases. 
	Its submatrix of $k'+1,k'+2,\ldots,\ell'$-th columns 
	is a zero matrix.
	On the other hand, 
	the submatrix of $1,2,\ldots, k'$-th columns
	must have the column-full rank $k'$. 
	Thus, the rank $R(X,Y)$ of $A_i|_{X \times Y}$ is 
	$k' = \ell' - (\ell'-k') = r(Y) - r(Y \cap X^{\bot})$.
	The second expression is obtained similarly.
\end{proof}

\paragraph{Proof of Proposition~\ref{prop:Lovasz}.}
An orthogonal frame 
$\langle e_1,e_2,\ldots,e_n,f_1,f_2,\ldots,f_n \rangle$ 
is naturally identified with Boolean lattice 
$2^{[2n]} \simeq 2^{[n]} \times 2^{[n]}$.
Notice that ${e_i}^{\bot} = f_i$ if $i \leq k$ 
and ${e_i}^{\bot} = \KK^n$ if $i > k$.
The latter fact follows from 
$e_i \subseteq U_0 \Rightarrow {e_i}^{\bot} \supseteq U_0^{\bot} = \KK^n$.
By Lemma~\ref{lem:bot}~(2), 
we have
$X^{\bot} = X \cap \{1,2,\ldots,k\}$ for $X \in 2^{[n]}$.
By Lemma~\ref{lem:formula_R} 
and $\dim Y = n - |Y|$ for 
$Y \in 2^{[n]} \simeq \langle f_1,f_2,\ldots,f_n \rangle$ 
(with inclusion order reversed),
 we have
\[
	R(X,Y)  =  |Y \cup (X \cap [k]) | - |Y| =  |(X \setminus Y) \cap [k]|.
\]
%\begin{eqnarray*}
%	R(X,Y) & = & |Y \cup (X \cap \{1,2,\ldots,r\}) | - |Y| \\
%	&= & |(X \setminus Y) \cap \{1,2,\ldots,r\} | \quad 
%	((X,Y) \in 2^{\{1,2,\ldots m\}} \times 2^{\{1,2,\ldots,n\}}).
%\end{eqnarray*} 
Identify $2^{[n]} \times 2^{[n]}$ with $\{0,1\}^n \times \{0,1\}^n$ 
by $(X,Y) \mapsto (1_{X}, 1_{Y})$.
Then $R$ is also written as
\begin{equation*}
R(x,y) = \sum_{i=1}^k \max \{0, x_i - y_i\} \quad ((x,y) \in \{0,1\}^n \times \{0,1\}^n).
\end{equation*}
Observe that the Lov\'asz extension of 
$(x_i,y_i) \mapsto \max \{0, x_i - y_i\}$ is 
obtained simply by extending the domain to $[0,1]^2$.
Hence, we obtain the desired expression.

\paragraph{Proof of Proposition~\ref{prop:orthogonal_frame}.}
By Lemma~\ref{lem:frame}, 
we can find (in polynomial time) a frame $\langle e_1,e_2,\ldots,e_n \rangle$ 
containing two chains ${\cal X}$ and ${\cal Y}^{\bot}$.
Suppose that ${\cal X} = \{X_i\}_{i=0}^n$ and ${\cal Y} = \{Y_i\}_{i=0}^n$.
We can assume that $e_{k+1} \vee e_{k+2} \vee \cdots \vee e_{n} = {Y_0}^{\bot} = U_0$.
Let $f_i := {e_i}^{\bot}$ for $i=1,2,\ldots,k$.
Then $f_1 \vee f_2 \vee \cdots \vee f_n = V_0$ holds,
since, by Lemma~\ref{lem:bot}~(2), we have 
$V_0 =  (e_1 \vee e_2 \vee \cdots \vee e_n)^{\bot} = {e_1}^{\bot} \vee {e_2}^{\bot} \vee \cdots \vee {e_n}^{\bot} = f_1 \vee f_2 \vee \cdots f_k \vee \KK^n \vee \cdots \vee \KK^n = f_1 \vee f_2 \vee \cdots \vee f_k$.

Consider the chain ${\cal Y}^{\bot \bot}$ in ${\cal M}$.
Then ${\cal Y}^{\bot \bot} \subseteq \langle f_1,f_2,\ldots,f_k \rangle$ since
each $Y_i^{\bot}$ is the join of a subset of $e_1,e_2,\ldots,e_n$.
Taking $(\cdot)^\bot$ as above, 
$Y_i^{\bot \bot}$ is represented as the join of a subset of $f_1,f_2,\ldots,f_k$.
Consider a consecutive pair $Y_{i-1}, Y_{i}$ in ${\cal Y}$.
Consider ${Y_{i-1}}^{\bot \bot}$ and ${Y_{i}}^{\bot \bot}$.
Then, by Lemma~\ref{lem:bot}~(3), 
${Y_{i-1}}^{\bot \bot} \preceq Y_{i-1}$ and ${Y_{i}}^{\bot \bot} \preceq Y_{i}$.
Suppose that ${Y_{i-1}}^{\bot \bot} \neq {Y_{i}}^{\bot \bot}$.
Then ${Y_{i-1}}^{\bot \bot} \prec: {Y_{i}}^{\bot \bot}$ (by (\ref{eqn:basic}) and Lemma~\ref{lem:bot}~(1)).
Thus, for some $f_j$ $(1 \leq j \leq k)$, 
it holds ${Y_{i}}^{\bot \bot} = f_j \vee {Y_{i-1}}^{\bot \bot}$.
Here $f_j \not \preceq Y_{i-1}$ must hold.
Otherwise ${Y_{i-1}}^{\bot \bot} \succeq {f_j}^{\bot \bot}  = {e_j}^{\bot \bot \bot} = f_j$, 
which contradicts ${Y_{i-1}}^{\bot \bot} \prec: {Y_{i}}^{\bot \bot} = f_j \vee {Y_{i-1}}^{\bot \bot}$.
Also, $f_j \preceq Y_i^{\bot \bot} \preceq Y_i$.
Thus $Y_{i} = Y_{i-1} \vee f_j$.
Therefore, for each $i$ with ${Y_{i-1}}^{\bot \bot} = {Y_{i}}^{\bot \bot}$, 
we can choose an atom $f$ with $Y_i = f \vee Y_{i-1}$ to add to $f_1,f_2,\ldots,f_k$, 
and obtain a required frame 
$\langle f_1,f_2,\ldots f_n \rangle$ (containing ${\cal X}^{\bot}$ and ${\cal Y}$).

\paragraph{Proof of Proposition~\ref{prop:R(x,y)}.}
Consider retractions $\varphi 
:= \varphi_{e_n,e_{n-1},\ldots,e_{k+1},e_1,e_2,\ldots,e_k}: {\cal L} \to \langle e_1,e_2,\ldots,e_n \rangle$ 
and $\phi :=  \varphi_{f_1,f_2,\ldots,f_n}: {\cal M} \to \langle f_1,f_2,\ldots,f_n \rangle$; 
see Lemma~\ref{lem:frame}~(2) for definition.
Define a retraction $(\bar \varphi, \bar \phi): K({\cal L}) \times K({\cal M}) \to K(\langle e_1,\ldots,e_n,f_1,\ldots,f_n \rangle)$ by
\begin{equation*}
(\bar \varphi, \bar \phi)(x,y) := (\bar\varphi(x),\bar\phi(y)) \quad ((x,y) \in  K({\cal L}) \times K({\cal M}))
\end{equation*} 
Our goal is to show that $(\bar \varphi, \bar \phi)$
does not increase the objective value of P2.

First we show
\begin{equation}\label{eqn:phi_bot}
(\phi(Y))^{\bot}  = \varphi(Y^{\bot})  \quad (Y \in {\cal M}).
\end{equation}
Indeed, letting $F_i := f_1 \vee f_2 \vee \cdots \vee f_i$ and $E_i := e_1 \vee e_2 \vee \cdots \vee e_i$, we have
\begin{eqnarray}
(\phi(Y))^{\bot} & = & \left( \bigvee \{f_i \mid i \in [n]: Y \wedge F_i :\succ Y \wedge F_{i-1}  \}     \right)^{\bot} \nonumber \\
& = & \left( V_0  \wedge \bigvee \{f_i \mid i \in [n]: Y \wedge F_i :\succ Y \wedge F_{i-1}  \}   \nonumber  \right)^{\bot}  \nonumber \\
& =&  \left( \bigvee \{f_i \mid i \in [k]: Y \wedge F_i :\succ Y \wedge F_{i-1}  \}   \nonumber  \right)^{\bot} \nonumber \\
& =&  \left( \bigvee \{f_i \mid i \in [k]: (Y \wedge V_0) \wedge F_i :\succ (Y \wedge V_0)  \wedge F_{i-1}  \}   \nonumber  \right)^{\bot} \nonumber \\
& =&  \bigvee \{U_0 \vee e_i \mid i \in [k]: Y^{\bot} \wedge (U_0  \vee E_i) :\succ Y^{\bot} \wedge (U_0  \vee E_{i-1})  \}   = \varphi(Y^{\bot}). \nonumber
\end{eqnarray}
The second equality follows from 
$(V_0 + Z)^{\bot} = V_0^{\bot} \cap Z^{\bot} = \KK^n \cap Z^{\bot} = Z^\bot$.
The third from the modularity:
Let $A := \bigvee \{f_i \mid i \in [k]: Y \wedge F_i :\succ Y \wedge F_{i-1}  \}$ and 
$B := \bigvee \{f_i \mid i \in [n] \setminus [k]: Y \wedge F_i :\succ Y \wedge F_{i-1}  \}$. 
Then $V_0 \wedge B = \KK^n$ and $Y = A \vee B$. 
Thus we have $A = (V_0 \wedge B) \vee A = V_0 \wedge Y$.
The forth follows from $f_i \preceq V_0$ for $i \in [k]$.
The fifth follows from Lemma~\ref{lem:bot}~(4). Note 
that by $Y^{\bot} \succeq U_0$ 
each atom $e_i$ with $i \geq k+1$ is taken in the join of the definition (\ref{eqn:retraction}) of $\varphi = \varphi_{e_n,e_{n-1},\ldots,e_{k+1},e_1,e_2,\ldots,e_k}$.
 
Next we show
\begin{equation}\label{eqn:R_phi}
R(\varphi(X), \phi(Y)) \leq R(X, Y) \quad (X \in {\cal L}, Y \in {\cal M}).
\end{equation}
Indeed, for $r = \dim$, we have
$
R(\varphi(X), \phi(Y)) 
= r(\varphi(X)) - r(\varphi(X) \wedge \phi(Y)^{\bot})
= r(X) - r(\varphi(X) \wedge \varphi(Y^{\bot}))
\leq  r(X) - r(\varphi(X \wedge Y^{\bot}))
=  r(X) - r(X \wedge Y^{\bot}) = R(X,Y).
$
%\begin{eqnarray*}
%	R(\varphi(X), \phi(Y)) 
%	&=& r(\varphi(X)) - r(\varphi(X) \wedge \phi(Y)^{\bot}) \nonumber \\
%	 &=& r(X) - r(\varphi(X) \wedge \varphi(Y^{\bot})) \nonumber \\
%	 &\leq & r(X) - r(\varphi(X \wedge Y^{\bot})) \nonumber \\
%	 &= & r(X) - r(X \wedge Y^{\bot}) = R(X,Y).
%\end{eqnarray*}
In the second equality, 
we use (\ref{eqn:phi_bot}) and rank-preserving property of $\varphi$. 
The inequality follows from order-preserving property 
$\varphi(X) \wedge \varphi(Y^{\bot}) \succeq \varphi(X \wedge Y^{\bot})$.

By (\ref{eqn:R_phi}), 
we have $R(\bar \varphi(x), \bar \phi(y)) \leq R(x,y)$; 
recall the isometry between $K({\cal L} \times {\cal M})$
and $K({\cal L}) \times K({\cal M})$ (Section~\ref{subsub:K(L)}).
 Since $\bar \varphi$ and $\bar \phi$ are nonexpansive retractions (Proposition~\ref{prop:basic_K(L)})),
we have $d(x^0,x) \geq d(\bar \varphi(x^0), \bar \varphi(x)) = d(x^0, \bar \varphi(x))$ and $d(y^0,y) \geq d(\bar \phi(y^0), \bar \phi(y)) = d(y^0, \bar \phi(y))$.
Thus, $(\bar \varphi, \bar \phi)$ has a desired property 
to prove the statement.

\section{A $p$-adic approach to nc-rank over $\QQ$}\label{sec:p-adic}

In this section, we consider 
nc-rank computation of $A = \sum_{i=1}^m A_i x_i$, where
each $A_i$ is a matrix over $\QQ$. 
Specifically, we assume that each $A_i$ is an integer matrix.
As remarked in Remark~\ref{rem:bit}, 
the algorithm in the previous section has no polynomial guarantee for the length of bits representing bases of vector subspaces.
Instead of controlling bit sizes,
we consider to reduce 
nc-rank computation over $\QQ$ to
that over $GF(p)$ (for small $p$).

For simplicity, we deal with 
nc-singularity testing of $A$. 
Here $A$ is called {\em nc-singular} if $\ncrank A < n$, 
and called {\em nc-regular} if $\ncrank A = n$.
We utilize a relationship between nc-rank and the ordinary rank (on arbitrary field $\KK$).
For a positive integer $d$, the {\em $d$-blow up} $A^{\{d\}}$ of $A$ is a linear symbolic matrix defined by
\begin{equation*}
A^{\{d\}} := \sum_{i=1}^m A_i \otimes X_i,
\end{equation*}
where $\otimes$ denotes the Kronecker product and
$X_i = (x_{i,jk})$ is a $d \times d$ matrix with variable entries 
$x_{i,jk}$ $(i \in [m],j,k \in [d])$.
\begin{Lem}[\cite{HrubesWigderson2015,Kaliuzhnyi-VerbovetskyiVinnikov2012}]\label{lem:blowup}
	A matrix $A$ of form (\ref{eqn:A})
	is nc-regular if and only if there is a positive integer $d$ such that $A^{\{d\}}$ is regular. 
\end{Lem}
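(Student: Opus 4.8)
The plan is to prove the two implications separately. The implication ``some blow-up $A^{\{d\}}$ is regular $\Rightarrow$ $A$ is nc-regular'' I would get quickly from the Fortin--Reutenauer formula (Theorem~\ref{thm:FR}) together with elementary Kronecker-product bookkeeping; the converse is the substantive one, and for it I would invoke the realization theory of the free skew field, essentially following the cited works.

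For the easy direction I argue by contraposition. Suppose $A$ is nc-singular, say $\ncrank A = r < n$. By Theorem~\ref{thm:FR} there are $S,T \in GL_n(\KK)$ such that $SAT$ has an $a \times b$ zero submatrix with $2n - a - b = r$. For every positive integer $d$,
\[
(S \otimes I_d)\, A^{\{d\}}\, (T \otimes I_d) \;=\; \sum_{i=1}^{m} (S A_i T) \otimes X_i \;=\; (SAT)^{\{d\}} ,
\]
so $A^{\{d\}}$ and $(SAT)^{\{d\}}$ have the same rank over $\KK(x_{i,jk})$, and the latter carries an $ad \times bd$ block of zeros. Since an $N \times N$ matrix over a field having a $p \times q$ zero submatrix has rank at most $2N - p - q$, we obtain $\rank A^{\{d\}} \le (2n-a-b)d = rd < nd$, so $A^{\{d\}}$ is singular. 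This gives the easy implication; as a byproduct it yields $\tfrac1d \rank A^{\{d\}} \le \ncrank A$ for all $d$, an inequality I reuse below.

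For the hard direction, suppose $A$ is nc-regular, i.e.\ invertible over the free skew field $\KK(\langle x_1,\ldots,x_m\rangle)$; equivalently (by Cohn's theory) $A$ is \emph{full} as a matrix over the free ideal ring $\KK\langle x_1,\ldots,x_m\rangle$, i.e.\ admits no factorization $A = PQ$ with $P$ of size $n \times k$ and $Q$ of size $k \times n$ over $\KK\langle x_1,\ldots,x_m\rangle$ and $k < n$. The task is to convert this polynomial-ring statement into invertibility of a finite generic substitution: substituting $x_i \mapsto X_i$ (the generic $d\times d$ matrix of indeterminates) into $A$ produces exactly the $nd \times nd$ matrix $A^{\{d\}}$ over $\KK(x_{i,jk})$, and one must exhibit a $d$ for which it is invertible. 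I would do this through the linear representation of noncommutative rational functions: each entry of $A^{-1}$ can be written as $u^\top C^{-1} v$ with $C$ an affine-linear matrix and $u,v$ constant vectors, and assembling these representations into one affine-linear matrix whose invertibility under generic $d\times d$ substitution (with $d$ bounded by the sizes of the representations) forces $A^{\{d\}}$ to be invertible. Combined with the inequality from the easy direction, $\ncrank A = n$ then gives $\rank A^{\{d\}} = nd$ for that $d$.

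I expect this last step --- promoting fullness over $\KK\langle x_1,\ldots,x_m\rangle$ to invertibility of a bounded-size generic substitution --- to be the main obstacle, as it rests on Cohn's skew-field-of-fractions machinery and the realization theory of nc rational functions rather than on anything set up earlier in the paper. I would therefore not reprove it from scratch but cite~\cite{HrubesWigderson2015,Kaliuzhnyi-VerbovetskyiVinnikov2012}, taking care to extract an explicit polynomial bound on the blow-up parameter $d$, since it is such a bound --- not merely the existence of some $d$ --- that the $p$-adic reduction of Section~\ref{sec:p-adic} needs.
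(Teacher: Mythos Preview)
The paper does not prove this lemma at all: it is stated with citations to \cite{HrubesWigderson2015,Kaliuzhnyi-VerbovetskyiVinnikov2012} and used as a black box. So there is no ``paper's proof'' to compare against; your plan to cite those works for the hard direction is exactly what the paper does, and your argument for the easy direction via Theorem~\ref{thm:FR} and the Kronecker identity $(S\otimes I_d)A^{\{d\}}(T\otimes I_d)=(SAT)^{\{d\}}$ is correct and is a reasonable addition.

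There is, however, a genuine misconception in your final paragraph. You write that one must ``extract an explicit polynomial bound on the blow-up parameter $d$, since it is such a bound --- not merely the existence of some $d$ --- that the $p$-adic reduction of Section~\ref{sec:p-adic} needs.'' This is the opposite of what the paper does. Immediately after stating the lemma the paper emphasizes: \emph{``Interestingly, our reduction presented below does not use any bound of $d$.''} In the proof of Proposition~\ref{prop:bound} one has $v_p\Det' A=(1/d)\,v_p\det A^{\{d\}}$ for \emph{some} $d$, and the coefficient estimate $|a_z|\le n^{3nd/2}D^{nd}$ gives $v_p\det A^{\{d\}}=O(nd\log_p nD)$; the factor $d$ then cancels in $(1/d)\,v_p\det A^{\{d\}}$, yielding $v_p\Det' A=O(n\log_p nD)$ with no dependence on $d$. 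So the mere existence of a regular blow-up (exactly what the lemma asserts) is all that Section~\ref{sec:p-adic} requires, and insisting on an explicit bound would miss one of the points the authors highlight about their approach.
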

There is an upper bound of such a $d$. 
Derksen and Makam\cite{DerksenMakam17} proved 
a polynomial (linear) bound $d \leq n-1$ 
by utilizing the {\em regularity lemma} $d| \rank A^{\{d\}}$ in \cite{IQS15a}.
Such bounds play an essential role in the validity of the algorithms of~\cite{GGOW15,IQS15a,IQS15b}.
Interestingly, our reduction presented below does not use any bound of $d$.

Fix an arbitrary prime number $p > 1$. 
Let $v_p: \QQ  \to \ZZ \cup \{\infty\}$ denote the $p$-adic valuation:
\begin{equation*}
v_p(u) := k\  {\rm if}\ u = p^{k} a/b, 
\end{equation*}
where $a, b$ are nonzero integers prime to $p$, and we let $v_p(0) := \infty$.
Every rational $u \in \QQ$ is uniquely represented as the $p$-adic expansion 
\begin{equation}\label{eqn:p-adic}
u = \sum_{i=k}^{\infty} a_i p^i,
\end{equation}
where $k= v_p(u)$ and $a_i \in \{0,1,2,\ldots,p-1\}$.
The leading (nonzero) coefficient $a_k$ is given as the solution of $a = b x \mod p$. Then $u - a_k p^{k}$ is divided 
by $p^{k+1}$. Repeating the same procedure for $u - a_k p^{k}$, we obtain the subsequent coefficients in (\ref{eqn:p-adic}). 

The $2$-adic expansion of a nonnegative integer $z$ 
is the same as the binary expression of $z$, 
where $v_2(z)$ is equal to 
the number of consecutive zeros from the first bit.  
This interpretation holds for an arbitrary prime $p$.
In particular, 
the $p$-adic valuation of a nonzero integer is bounded by 
the bit-length in base $p$:
\begin{equation}\label{eqn:vp_vs_bit}
v_p (z) \leq \log_p |z|  \quad (z \in \ZZ \setminus \{0\}).
\end{equation} 
%It will turn out that this is the reason why 
%we consider the $p$-adic valuation.

%A $p$-adic number is an expression (\ref{eqn:p-adic})
%for arbitrary $k \in \ZZ$ and coefficient $a_i \in \{0,1,2,\ldots,p-1\}$. Let $\QQ_p$ denote 
%the field of $p$-adic numbers, which is 
%the completion of $\QQ$ with respect to 
%the $p$-adic distance $(x,y) \mapsto p^{-v_p(x-y)}$.
%The valuation $v_p$ is also extended to $\QQ_p$ (by defining it the exponent of the leading term).

The $p$-adic valuation $v_p$ on $\QQ$ is extended to $\QQ(x_1,x_2,\ldots,x_m)$ as follows.
For a polynomial $f \in \QQ[x_1,x_2,\ldots,x_m]$, define $v_p(f)$ by  
\begin{equation}\label{eqn:Gauss}
v_p (f) := \min \{ v(a) \mid \mbox{$a$ 
	is the coefficient of a term of $f$}\}.
\end{equation}
Accordingly, the valuation of 
a rational function $f/g$ is defined as $v_p (f)-v_p(g)$. 
This is called  the {\em Gauss extension} of~$v_p$.

Our algorithm for testing nc-singularity is 
based on the following problem ({\em maximum vanishing submodule problem; MVMP}):  
\begin{eqnarray*}
	\mbox{MVMP:} \quad {\rm Max}. &&  -v_p \det P - v_p \det Q \\
	{\rm s.t.}  && v_p (PA Q)_{ij} \geq 0 \quad (i,j \in [n]), \\
	&& P,Q \in GL_n (\QQ).
\end{eqnarray*}
This problem is definable for 
an arbitrary field with a discrete valuation, and the following arguments are applicable for such a field, while 
\cite{HH_degdet} introduced MVMP
for the rational function field with one valuable.

MVMP is also a discrete convex optimization 
on a CAT(0) space. Indeed, 
its domain can be viewed as 
the vertex set (the set of {\em lattices}, certain submodules of $\QQ^n$) of 
the Euclidean building for $GL_n(\QQ)$, and the objective function is an {\em L-convex function}; see~\cite{HH_degdet,HI_degdet}.
A Euclidean building is a representative space admitting a CAT(0)-metric.

The optimal value of MVMP is denoted by 
$v_p \Det' A\in \ZZ \cup \{\infty\}$, where we let $v_p \Det' A := \infty$
if MVMP is unbounded. 
The motivation behind this notation $v_p \Det' A$ 
is explained in Remark~\ref{rem:Det}.

For a feasible solution $(P,Q)$ of MVMP,  
consider the $p$-adic expansion of 
$PA_i Q = \sum_{k=0}^{\infty} (PA_iQ)^{(k)} p^k$ for each $i$.
The leading matrix 
$(PA_i Q)^{(0)}$ consists of values $0,1,\ldots,p-1$ and is considered in $GF(p)$.
Then we can consider the linear symbolic matrix 
\[
(PAQ)^{(0)} := \sum_{i=1}^m (PA_i Q)^{(0)} x_i 
\]
over $GF(p)$.
\begin{Lem}\label{lem:Det<=det}
	For a feasible solution $(P,Q)$ of MVMP, the following hold:
	\begin{itemize}
		\item[{\rm (1)}] $- v_p \det P -  v_p \det Q \leq v_p \det A$. In particular, $v_p \Det' A \leq v_p \det A$.
		\item [{\rm (2)}]	If $(PAQ)^{(0)}$ is regular, 
		then $v_p \det A = - v_p \det P - v_p \det Q = v_p \Det' A$.
	\end{itemize}
\end{Lem}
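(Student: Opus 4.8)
The plan is to derive both parts from one observation: feasibility of $(P,Q)$ forces $v_p\det(PAQ)\ge 0$, while $v_p\det(PAQ)=v_p\det P+v_p\det Q+v_p\det A$ by multiplicativity of the Gauss extension. I would start with (1). Since $P,Q\in GL_n(\QQ)$, the identity $\det(PAQ)=(\det P)(\det Q)(\det A)$ holds in $\QQ(x_1,\ldots,x_m)$, and because $v_p$ is additive on products and $\det P,\det Q$ are nonzero scalars, $v_p\det(PAQ)=v_p\det P+v_p\det Q+v_p\det A$. On the other hand, feasibility says every coefficient $(PA_iQ)_{jk}$ has $v_p\ge 0$, i.e.\ lies in the localization $\ZZ_{(p)}$, so every entry of $PAQ$ lies in $\ZZ_{(p)}[x_1,\ldots,x_m]$; as $\det(PAQ)$ is the Leibniz-formula polynomial expression (with $\pm1$ integer coefficients) in these entries, all its coefficients lie in $\ZZ_{(p)}$, hence $v_p\det(PAQ)\ge 0$. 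Combining the two facts gives $-v_p\det P-v_p\det Q\le v_p\det A$, and taking the supremum over all feasible $(P,Q)$ (the set is nonempty, e.g.\ $(I,I)$, as the $A_i$ are integer matrices) yields $v_p\Det' A\le v_p\det A$; the case $\det A=0$, where $v_p\det A=\infty$, is trivial.

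For (2), I would use that reduction modulo $p$ is a ring homomorphism $\ZZ_{(p)}\to GF(p)$; extended coefficientwise to $\ZZ_{(p)}[x_1,\ldots,x_m]$ and entrywise to $n\times n$ matrices, it commutes with the determinant, because the determinant is a polynomial map. Applied to $PAQ$—whose entrywise reduction mod $p$ is, by definition, the leading matrix $(PAQ)^{(0)}$—this shows that the coefficientwise reduction mod $p$ of $\det(PAQ)$ equals $\det\!\big((PAQ)^{(0)}\big)$ in $GF(p)[x_1,\ldots,x_m]$. If $(PAQ)^{(0)}$ is regular, then $\det\!\big((PAQ)^{(0)}\big)\ne 0$, so this reduction is nonzero, i.e.\ some coefficient of $\det(PAQ)$ is a unit of $\ZZ_{(p)}$; together with $v_p\det(PAQ)\ge 0$ from part (1) this forces $v_p\det(PAQ)=0$, hence $v_p\det P+v_p\det Q+v_p\det A=0$, so $v_p\det A=-v_p\det P-v_p\det Q$.

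To close (2) I would note that $-v_p\det P-v_p\det Q$ is the objective value of the feasible solution $(P,Q)$, hence at most $v_p\Det' A$, while $v_p\Det' A\le v_p\det A$ by (1); the resulting sandwich $v_p\det A=-v_p\det P-v_p\det Q\le v_p\Det' A\le v_p\det A$ forces all quantities to be equal. I do not anticipate a genuine obstacle; the step most deserving of care is "the determinant commutes with reduction mod $p$," which becomes routine once one insists on working over $\ZZ_{(p)}$ (rather than $\ZZ$) so that every scalar arising in a feasible solution lies in the domain of the reduction homomorphism. Everything else is bookkeeping with the Gauss extension—$v_p$ additive on products and $v_p(f+g)\ge\min\{v_p(f),v_p(g)\}$ on sums.
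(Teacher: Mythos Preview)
Your proof is correct and follows essentially the same approach as the paper's, which simply writes ``They follow from $0 \leq v_p \det P A Q =  v_p \det P +  v_p \det Q + v_p \det A$. The inequality holds in equality precisely when the leading matrix $(PAQ)^{(0)}$ is regular.'' You have spelled out in detail the justifications the paper leaves implicit---why feasibility forces $v_p\det(PAQ)\ge 0$, why reduction mod $p$ commutes with the determinant, and the sandwich at the end---but the underlying idea is identical.
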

\begin{proof}
They follow from $
0 \leq v_p \det P A Q =  v_p \det P +  v_p \det Q + v_p \det A$.
The inequality holds in equality precisely 
when the leading matrix $(PAQ)^{(0)}$ 
is regular.
\end{proof}
The following algorithm for MVMP is due to \cite{HH_degdet}, 
which originated from Murota's {\em combinatorial relaxation algorithm}~\cite{MurotaBook} 
and can be viewed as an descent algorithm on the Euclidean building. For an integer vector $z \in \ZZ$, 
let $(p^{z})$ denote the diagonal matrix with diagonals 
$p^{z_1}, p^{z_2},\ldots,p^{z_n}$ in order.
\begin{description}
	\item[Algorithm: Val-Det]
	\item[0:] Let $(P,Q) := (I,I)$.
	\item[1:]  Solve FR (or MVSP) for $(PAQ)^{(0)}$, and 
	obtain optimal matrices $S,T \in GL_n(GF(p))$ such that 
	$S(PAQ)^{(0)}T$ has an $r \times s$ zero submatrix in its upper-left corner.
	\item[2:] If  $(PAQ)^{(0)}$ is nc-singular, i.e.,
	$n < r+s$, then 
	let $(P,Q) \leftarrow ((p^{- 1_{[r]}})SP, QT (p^{1_{[n] \setminus [s]}}))$
	and go to step 1. Otherwise stop.
\end{description}

The initial $(P,Q)$ in step 0 is feasible with objection value $0$, as each $A_i$ is an integer matrix.
In step 2, $S, T$ are regarded as matrices in $GL_n(\QQ)$ with entries 
in $\{0,1,\ldots,p-1\}$.
Observe that each entry in the $r \times s$ upper-left submatrix of $SPAQT$
is divided by $p$.
Thus, the update in step 2 keeps the feasibility of $(P,Q)$.
Further, it strictly increases the objective value:
$- v_p \det (p^{-1_{[r]}}) S P - v_p \det QT(p^{1_{[n] \setminus [s]}}) 
= (r + s - n) - v_p \det P - v_p \det Q$.
Note that $\det S$ and $\det T$ cannot be divided by $p$,
since $S$ and $T$ are invertible in modulo $p$.  
Therefore, nc-regularity of $(PAQ)^{(0)}$ 
is a necessary condition for optimality of~$(P,Q)$. 
In fact, it is sufficient.
\begin{Prop}[{\cite{HI_degdet}}]\label{prop:optimality} 
	A feasible solution $(P,Q)$ is optimal if and only if $(PAQ)^{(0)}$ is nc-regular. In this case, 
		it holds $v_p \Det' A 
		= (1/d) v_p \det A^{\{d\}}$ for some $d > 0$.
\end{Prop}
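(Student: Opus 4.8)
The plan is to treat the two implications of the equivalence separately; the forward implication ``$(P,Q)$ optimal $\Rightarrow$ $(PAQ)^{(0)}$ nc-regular'' is essentially the paragraph preceding the statement, so the substance is the converse, which I would prove simultaneously with the determinant formula.

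\emph{Necessity of nc-regularity.} I would argue by contraposition, reusing the analysis of Val-Det. If $(PAQ)^{(0)}$ is nc-singular, then $\ncrank (PAQ)^{(0)}<n$, so by Theorem~\ref{thm:FR} there are $S,T\in GL_n(GF(p))$ and $r,s$ with $r+s>n$ such that $S(PAQ)^{(0)}T$ has an $r\times s$ zero submatrix, which we may place in the upper-left corner. As observed before the statement, replacing $(P,Q)$ by $((p^{-1_{[r]}})SP,\ QT(p^{1_{[n]\setminus[s]}}))$ keeps feasibility of MVMP and raises the objective by $r+s-n>0$; hence $(P,Q)$ is not optimal.

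\emph{Sufficiency and the formula.} Suppose $(PAQ)^{(0)}$ is nc-regular over $GF(p)$. By the blow-up Lemma~\ref{lem:blowup} applied over $GF(p)$, there is $d>0$ such that $((PAQ)^{(0)})^{\{d\}}$ is regular. The point is that the blow-up commutes with both the coordinate change by $(P,Q)$ and with reduction modulo $p$: since $(P\otimes I_d)A^{\{d\}}(Q\otimes I_d)=\sum_i(PA_iQ)\otimes X_i=(PAQ)^{\{d\}}$ and the entries of the variable matrices $X_i$ are unaffected by the $p$-adic expansion, the leading matrix satisfies $((PAQ)^{\{d\}})^{(0)}=((PAQ)^{(0)})^{\{d\}}$, which is regular. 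Now $A^{\{d\}}$ is itself a linear symbolic matrix with integer coefficient matrices, and $(P\otimes I_d,\,Q\otimes I_d)\in GL_{nd}(\QQ)$ is a feasible solution of MVMP for $A^{\{d\}}$, because the entries of $(P\otimes I_d)A^{\{d\}}(Q\otimes I_d)$ are of the form $(PA_iQ)_{ab}\,x_{i,jk}$ with $v_p((PA_iQ)_{ab})\ge 0$ by feasibility of $(P,Q)$. Hence Lemma~\ref{lem:Det<=det}(2), applied to $A^{\{d\}}$ with this feasible solution and using $\det(P\otimes I_d)=(\det P)^d$, gives
\[
v_p\det A^{\{d\}}\;=\;-v_p\det(P\otimes I_d)-v_p\det(Q\otimes I_d)\;=\;-d\,v_p\det P-d\,v_p\det Q\;=\;v_p\Det'(A^{\{d\}}).
\]
Finally I would deduce optimality of $(P,Q)$ for MVMP of $A$ itself. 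Let $(P',Q')$ be any feasible solution of MVMP for $A$; then $(P'\otimes I_d,\,Q'\otimes I_d)$ is feasible for MVMP of $A^{\{d\}}$ by the same entrywise computation, so Lemma~\ref{lem:Det<=det}(1) for $A^{\{d\}}$ yields $-d\,v_p\det P'-d\,v_p\det Q'\le v_p\det A^{\{d\}}=-d\,v_p\det P-d\,v_p\det Q$. Dividing by $d>0$ shows $-v_p\det P'-v_p\det Q'\le-v_p\det P-v_p\det Q$, so $(P,Q)$ is optimal and $v_p\Det'A=-v_p\det P-v_p\det Q=(1/d)\,v_p\det A^{\{d\}}$.

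The main obstacle I anticipate is the bookkeeping of the blow-up: verifying the identity $(P\otimes I_d)A^{\{d\}}(Q\otimes I_d)=(PAQ)^{\{d\}}$, the commutation $((PAQ)^{\{d\}})^{(0)}=((PAQ)^{(0)})^{\{d\}}$, and the Kronecker-determinant identity $\det(P\otimes I_d)=(\det P)^d$, and then confirming that $A^{\{d\}}$ together with the pair $(P\otimes I_d,Q\otimes I_d)$ satisfies the hypotheses of Lemma~\ref{lem:Det<=det} verbatim. These are all elementary, but they are where all the care is needed; granting them, sufficiency and the formula fall out in two lines from Lemmas~\ref{lem:blowup} and~\ref{lem:Det<=det}.
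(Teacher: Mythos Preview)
Your proof is correct and follows essentially the same route as the paper: both use Lemma~\ref{lem:blowup} to pass to a regular blow-up, tensor $(P,Q)$ with $I_d$ to obtain a feasible pair for MVMP of $A^{\{d\}}$, and invoke Lemma~\ref{lem:Det<=det}(2). The only difference is that where the paper cites the identity $v_p\Det' A=(1/d)\,v_p\Det' A^{\{d\}}$ from~\cite{HI_degdet} to close the argument, you instead tensor an arbitrary competitor $(P',Q')$ with $I_d$ and use Lemma~\ref{lem:Det<=det}(1) for $A^{\{d\}}$ directly; this makes your version self-contained but is otherwise the same idea.
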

\begin{proof}
	As in \cite[Lemma 4.2 (1)]{HI_degdet}, one can show
	$v_p \Det' A = (1/d) v_p \Det' A^{\{d\}}$ for all $d$.
	By Lemma~\ref{lem:Det<=det}, $v_p \Det' A \leq (1/d) v_p \det A^{\{d\}}$ holds for all $d$. 
	
	Suppose that $(PAQ)^{(0)}$ is nc-regular. 
	It suffices to show $v_p \Det' A \geq (1/d) v_p \det A^{\{d\}}$ for some $d$.
	By Lemma~\ref{lem:blowup}, 
	for some $d > 0$, 
	$((PAQ)^{(0)})^{\{d\}} = ((PAQ)^{\{d\}})^{(0)} 
	= ((P \otimes I) A^{\{d\}} (Q \otimes I))^{(0)}$ is regular. Observe that 
	$(P \otimes I, Q \otimes I)$ is feasible 
	to MVMP for $A^{\{d\}}$. By Lemma~\ref{lem:Det<=det}~(2),   
	we have $v_p \det A^{\{d\}} =
	 - v_p \det P \otimes I - v_p \det Q \otimes I 
	 = - d (v_p \det P + v_p \det Q) \leq d v_p \Det' A$. 
\end{proof}

From the proof and Lemma~\ref{lem:blowup}, we have:
\begin{Cor}\label{cor:regularity}
	$A$ is nc-regular if and only if $v_p \Det' A < \infty$.
\end{Cor}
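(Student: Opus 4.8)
The plan is to establish the two implications separately, using Lemma~\ref{lem:blowup} as the bridge between nc-regularity of $A$ and ordinary regularity of some blow-up $A^{\{d\}}$, together with two facts already isolated inside the proof of Proposition~\ref{prop:optimality}: the scaling identity $v_p\Det' A=(1/d)\,v_p\Det' A^{\{d\}}$, and the inequality $v_p\Det' A\le (1/d)\,v_p\det A^{\{d\}}$ (which is Lemma~\ref{lem:Det<=det}~(1) applied to $A^{\{d\}}$ combined with that identity).

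For the ``only if'' direction, suppose $A$ is nc-regular. By Lemma~\ref{lem:blowup} there is a $d>0$ with $A^{\{d\}}$ regular; that is, $\det A^{\{d\}}$ is a nonzero polynomial in the variables $x_{i,jk}$, so $v_p\det A^{\{d\}}$, being the minimum of the $p$-adic valuations of its coefficients (not all of which vanish), is finite. The inequality above then gives $v_p\Det' A\le (1/d)\,v_p\det A^{\{d\}}<\infty$.

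For the ``if'' direction I would argue the contrapositive. Since every $A_i$ is an integer matrix, $(I,I)$ is feasible for MVMP with objective value $0$, and the objective $-v_p\det P-v_p\det Q$ is integer-valued on the feasible region; hence, if $v_p\Det' A<\infty$, the set of attained objective values is a nonempty subset of $\ZZ$ bounded above, so MVMP attains its maximum at some feasible $(P^{\ast},Q^{\ast})$. By Proposition~\ref{prop:optimality} the leading matrix $(P^{\ast}AQ^{\ast})^{(0)}$ over $GF(p)$ is nc-regular, and, exactly as in the proof of that proposition, applying Lemma~\ref{lem:blowup} to $(P^{\ast}AQ^{\ast})^{(0)}$ yields a $d>0$ for which $((P^{\ast}\otimes I)A^{\{d\}}(Q^{\ast}\otimes I))^{(0)}$ is regular; this forces $\det A^{\{d\}}\neq 0$, so $A^{\{d\}}$ is regular, whence $A$ is nc-regular by Lemma~\ref{lem:blowup}. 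Alternatively, one can avoid appealing to attainment by noting that ``$A$ nc-singular $\Rightarrow$ $(PAQ)^{(0)}$ nc-singular for every feasible $(P,Q)$'' follows from the same contradiction, after which the strictly improving update of algorithm Val-Det (raising the objective by $r+s-n\ge 1$) produces feasible solutions with objective values tending to $+\infty$, i.e.\ $v_p\Det' A=\infty$.

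The ``only if'' direction is routine. The step deserving care is in the ``if'' direction: one must first justify that $v_p\Det' A<\infty$ actually produces an optimal feasible solution --- via integrality of the objective, or via the explicit improving step of Val-Det --- before Proposition~\ref{prop:optimality} can be invoked; beyond that, no ingredient other than Lemma~\ref{lem:blowup} and the contents of the proof of Proposition~\ref{prop:optimality} is needed.
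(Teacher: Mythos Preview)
Your proposal is correct and follows essentially the same route as the paper, which derives the corollary with the single line ``From the proof [of Proposition~\ref{prop:optimality}] and Lemma~\ref{lem:blowup}.'' You have simply unpacked that hint: the ``only if'' direction uses the inequality $v_p\Det' A\le (1/d)\,v_p\det A^{\{d\}}$ together with Lemma~\ref{lem:blowup}, and the ``if'' direction uses either attainment (integer-valued bounded objective) plus Proposition~\ref{prop:optimality}, or the Val-Det improving step --- both of which are exactly the ingredients the paper has already laid out.
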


Therefore, {\bf Val-Det} does not terminate if $A$ is nc-singular. 
A stopping criterion guaranteeing nc-singularity of $A$ is obtained as follows:
\begin{Prop}\label{prop:bound}
	Suppose that each $A_i$ consists of integer entries whose absolute values are at most $D$.
	If $A$ is nc-regular, then $v_p \Det' A =O(n \log_p nD)$.
	Thus, $\Omega(n \log_p nD)$ iterations of 
	{\bf Val-Det} certify nc-singularity of $A$. 
\end{Prop}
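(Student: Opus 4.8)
The plan is to bound $v_p \Det' A$ from above using Lemma~\ref{lem:Det<=det}~(1) together with a blow-up argument, and then to translate that bound into a bound on the number of iterations of \textbf{Val-Det}, since each iteration strictly increases the (integer-valued) objective by at least one. First I would recall from the proof of Proposition~\ref{prop:optimality} that, if $A$ is nc-regular, there is some $d > 0$ with $v_p \Det' A = (1/d) v_p \det A^{\{d\}}$, and by Derksen--Makam \cite{DerksenMakam17} we may take $d \leq n-1$. So it suffices to bound $v_p \det A^{\{d\}}$, which is a \emph{genuine} polynomial determinant: $A^{\{d\}}$ is an $nd \times nd$ matrix whose entries are integer linear combinations of the variables $x_{i,jk}$ with coefficients among the entries of the $A_i$, hence of absolute value at most $D$. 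The determinant $\det A^{\{d\}}$ is a polynomial in the $x_{i,jk}$, and by the Leibniz formula each of its coefficients is a sum of at most $(nd)!$ products of $nd$ entries, each such entry being an integer of absolute value at most $nD$ (a sum of at most $n$ terms bounded by $D$; more crudely one can just say the entries are bounded by $nD$). Therefore every coefficient of $\det A^{\{d\}}$ is an integer of absolute value at most $(nd)! (nD)^{nd}$, and by the Gauss extension formula (\ref{eqn:Gauss}) and the bound (\ref{eqn:vp_vs_bit}),
\[
v_p \det A^{\{d\}} \leq \log_p\!\left( (nd)! (nD)^{nd} \right) = O\!\left( nd \log_p (nD) \right),
\]
using $d \leq n$. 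Dividing by $d$ gives $v_p \Det' A = O(n \log_p nD)$, as claimed.

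For the second assertion, recall that \textbf{Val-Det} starts from $(P,Q) = (I,I)$ with objective value $0$, that every iterate is feasible, and that each iteration strictly increases the integer-valued objective $- v_p \det P - v_p \det Q$ (indeed by $r + s - n \geq 1$). Since by Lemma~\ref{lem:Det<=det}~(1) the objective never exceeds $v_p \det A$, and more to the point never exceeds $v_p \Det' A$ if we stop at optimality, the algorithm can perform at most $v_p \Det' A = O(n \log_p nD)$ strictly-improving iterations before reaching an optimal solution when $A$ is nc-regular. Contrapositively (using Corollary~\ref{cor:regularity}, or simply the fact that by Proposition~\ref{prop:optimality} the objective of \textbf{Val-Det} is unbounded above along its run when $A$ is nc-singular), if the algorithm has not terminated after $\Omega(n \log_p nD)$ iterations with an explicit constant exceeding the one above, then $A$ cannot be nc-regular, so $A$ is nc-singular.

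The only mildly delicate point is making the coefficient bound on $\det A^{\{d\}}$ honest: one must check that entries of $A^{\{d\}}$ are integers bounded by $D$ (they are exactly the $x_{i,jk}$-coefficients drawn from entries of $A_i$, hence $\leq D$), so the Leibniz bound really is $(nd)!\, D^{nd}$, and even the cruder $(nd)!\,(nD)^{nd}$ suffices; taking $\log_p$ and invoking Stirling, $\log_p (nd)! = O(nd \log_p nd)$, which is absorbed into $O(nd \log_p nD)$. I expect the main obstacle to be purely bookkeeping: ensuring the $O(\cdot)$ hides only absolute constants and that the dependence on $d$ cancels correctly after dividing by $d$ — there is no conceptual difficulty, since the Derksen--Makam bound $d \leq n-1$ does the real work of keeping $d$ polynomial, and everything else is the standard Hadamard/Leibniz-type estimate for the bit-size of a determinant.
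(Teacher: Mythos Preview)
Your argument is essentially correct but differs from the paper's in one notable respect: you invoke the Derksen--Makam bound $d \leq n-1$, whereas the paper deliberately avoids it (as flagged just after Lemma~\ref{lem:blowup}). The paper instead expands $\det A^{\{d\}}$ by multilinearity column-by-column, writing each monomial coefficient $a_z$ as a sum of at most $\prod_k n!/\prod_{i,j} z_{i,jk}! \leq n^{nd}$ signed subdeterminants, each bounded via Hadamard's inequality by $(n^{1/2}D)^{nd}$ (each column has at most $n$ nonzero entries, all $\leq D$). This gives $|a_z| \leq n^{3nd/2}D^{nd}$, hence $v_p \det A^{\{d\}} = O(nd \log_p nD)$ with the implied constant \emph{independent of $d$}; dividing by $d$ then yields the claim for whatever $d$ Proposition~\ref{prop:optimality} happens to supply. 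Your crude Leibniz estimate, by contrast, produces a $\log_p((nd)!)$ term, which after dividing by $d$ still leaves $n\log_p(nd)$ --- this is precisely why you need $d$ polynomially bounded. Both routes are valid: yours is shorter but imports a nontrivial external result, while the paper's is self-contained and makes a point of being so.

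One small slip to fix: the entries of $A^{\{d\}}$ are linear forms $\sum_{i=1}^m (A_i)_{\alpha\beta}\, x_{i,jk}$, so each involves $m$ terms, not $n$. Your coefficient bound should therefore read $(nd)!\,(mD)^{nd}$ (or similar), leaving an extra $n\log_p m$ after dividing by $d$. This is harmless once you note that WLOG $m \leq n^2$ (replace the $A_i$ by a basis of their span), but without that remark the bound is not literally $O(n\log_p nD)$.
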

\begin{proof}
Suppose that $A$ is nc-regular. By Proposition~\ref{prop:optimality}, $v_p \Det' A = (1/d)v_p \det A^{\{d\}}$ for some $d$.
We estimate $v_p \det A^{\{d\}}$. The following argument is a sharpening of the proof of \cite[Lemma 4.9]{HI_degdet}. 
Rewrite $A^{\{d\}}$ as
\[
A^{\{d\}} = \sum_{i \in[m],j,k\in [d]} A_{i,jk} x_{i,jk},
\]
where $A_{i,jk}$ is 
an $nd \times nd$ block matrix with block size $n$ such that 
the $(j,k)$-th block equals to $A_i$ 
and other blocks are zero.
By multilinearity of determinant, we have
\[
\det A^{\{d\}} = \sum_{\alpha_1,\alpha_2,\ldots,\alpha_{nd}}  \pm \det A[\alpha_1,\alpha_2,\ldots,\alpha_{nd}] x_{\alpha_1}x_{\alpha_2}\cdots  x_{\alpha_{nd}},
\]
where 
$\alpha_\gamma$ $(\gamma \in [nd])$ ranges over 
$\{(i,jk)\}_{i \in [m], j\in [d]}$ if 
$\gamma$ belongs to the $k$-th block (i.e., $k= \lceil \gamma/d \rceil)$ and
$A[\alpha_1,\alpha_2,\ldots,\alpha_{nd}]$ is the $nd \times nd$ matrix with the $\gamma$-th column chosen from   
$A_{k,ij}$ with $\alpha_\gamma = (k,ij)$.
A monomial in this expression is written as 
$a_z \prod x_{k,ij}^{z_{k,ij}}$ for a nonnegative vector $z = (z_{i,jk}) \in \ZZ^{md^2}$ with $\sum_{i,j} z_{i,jk} = n$ $(k \in [d])$. The coefficient $a_z$ is given by  
\[
a_z = \sum_{\alpha_1,\alpha_2,\ldots,\alpha_{nd}} \pm \det A[\alpha_1,\alpha_2,\ldots,\alpha_{nd}],
\]
where $\alpha_1,\alpha_2,\ldots,\alpha_{nd}$ are taken so that 
$(i,jk)$ appears $z_{i,jk}$ times. 
The total number of such indices is
\[
\prod_{k=1}^d \frac{n!}{\prod_{i,j} z_{i,jk} !} 
\leq n^{nd}. 
\]
From Hadamard's inequality and the fact 
that each column of $A[\alpha_1,\alpha_2,\ldots,\alpha_{nd}]$ has at most $n$ nonzero entries with absolute values at most $D$, 
we have
\begin{equation}
|a_z| \leq  n^{nd}  (n^{1/2}D)^{nd}
\leq n^{3nd/2} D^{nd}.
\end{equation}
Therefore, the bit length of $a_z$ in base $p$ is bounded by $O( nd \log_p n D)$.
By (\ref{eqn:vp_vs_bit}), we have $v_p \det A^{\{d\}} = O(nd \log n D)$. 
Thus, $v_p \Det' A = O(n \log_p n D)$.
\end{proof}

For $p=2$, the algorithm {\bf Val-Det} is executed as follows.
Instead of updating $(P,Q)$, update $A$ as 
$A \leftarrow (p^{-1_{[r]}})SAT(p^{1_{[n] \setminus [s]}})$.
Then, $A^{(0)}$ is computed as $(A_i)^{(0)} =  A_i \mod 2$.
In step 2, $S,T$ are $0,1$ matrices 
such that all entries of the $r \times s$ corner of each $SA_iT$ 
are divided by $2$. 
Hence, the next $A_i$ is again an integer matrix.
The bit-length bound of each entry in $A_i$ increases 
by $O(\log_2 n)$ (starting from the initial bound $O(\log_2 D)$).
Therefore, until detecting nc-singularity of $A$, 
the required bit-length is $O(n \log_2 n \log_2 n D)$.

\begin{Rem}[Valuations on the free skew field]\label{rem:Det}
As shown by Cohn~\cite[Corollary 4.6]{Cohn_valuation}, 
any valuation $v$ on a field $\KK$ is extended to 
the free skew field $\KK(\langle x_1,\ldots,x_m \rangle)$.
Then we can consider the valuation $v \Det A$ 
of the Dieudonne determinant $\Det A$ of $A$.
If the extension $v$ is discrete and coincides with  
the Gauss extension (\ref{eqn:Gauss}) on $\KK\langle x_1,x_2,\ldots,x_m \rangle$,
 then one can show by precisely the same argument in \cite{HH_degdet} that $v \Det A$ is given by MVMP.
 Such an extension seems always exist; in this case, $v_p \Det' = v_p \Det$.  
 We verified the existence of an extension with the latter property
 (by adapting Cohn's argument in \cite[Section 4]{Cohn_valuation}). 
 However we could not prove the discreteness. 
 Note that the arguments in this section is independent of the existence issue. 
 %Also see \cite{OkiPhD} for computation of $v \Det$ on general valued skew fields. 
\end{Rem}

\section*{Acknowledgments}
We thank Kazuo Murota, Satoru Iwata, Satoru Fujishige, 
Yuni Iwamasa for helpful comments, and 
thank Koyo Hayashi for careful reading.
The work was partially supported by JSPS KAKENHI Grant Numbers 25280004, 26330023, 26280004, 17K00029, and JST PRESTO Grant Number JPMJPR192A, Japan.


\begin{thebibliography}{1}
\small
\bibitem{BuildingBook}
P. Abramenko and K. S. Brown, {\em Buildings---Theory and Applications.} 
Springer, New York, 2008.
	
\bibitem{AGLOW}
Z. Allen-Zhu, A. Garg, Y. Li, R. Oliveira, 
and A. Wigderson. 
Operator scaling via geodesically convex optimization, invariant theory and polynomial identity testing. preprint,   2017, the conference version in STOC 2018.


\bibitem{Bacak13}	
M. Ba\v c\'ak, 
The proximal point algorithm in metric spaces. 
{\em Israel Journal of Mathematics} {\bf 194} (2013), 689--701.
%

\bibitem{Bacak14}	
M. Ba\v c\'ak, Computing medians and means in Hadamard spaces. 
{\em SIAM Journal on Optimization} {\bf 24} (2014), 1542--1566.


\bibitem{BacakBook}
M. Ba\v c\'ak, {\em Convex Analysis and Optimization in Hadamard Spaces}. 
De Gruyter, Berlin, 2014.

\bibitem{BHV01}
L. J. Billera, S. P. Holmes, and K. Vogtmann:
Geometry of the space of phylogenetic trees.
{\em Advances in Applied  Mathematics} {\bf 27} (2001), 733--767. 


\bibitem{BM10}
T. Brady and J. McCammond, 
Braids, posets and orthoschemes. 
{\em Algebraic and Geometric Topology} {\bf 10} (2010), 2277--2314.

\bibitem{BrHa}
M. R. Bridson and A. Haefliger, 
{\em Metric Spaces of Non-positive Curvature}. 
Springer-Verlag, Berlin, 1999.


\bibitem{BFGOWW}
P. B\"urgisser, C. Franks, A. Garg, R. Oliveira, 
M. Walter, and A. Wigderson,
Towards a theory of non-commutative optimization: 
geodesic first and second order methods 
for moment maps and polytopes. preprint, 2019,  
the conference version in FOCS 2019.

\bibitem{CCHO}
J. Chalopin, V. Chepoi, H. Hirai, and D. Osajda. 
Weakly modular graphs and nonpositive curvature. 
{\em Memoirs of the AMS}, to appear.


\bibitem{Cohn_valuation}
P. M. Cohn, The construction of valuations of skew fields.
{\em Journal of the Indian Mathematical Society} {\bf 54}
(1989) 1--45. 


\bibitem{Cohn_skew_fields}
P. M. Cohn, {\em Skew fields}. Cambridge University Press, Cambridge, 1995.

\bibitem{DerksenMakam17}
H. Derksen and V. Makam, 
Polynomial degree bounds for matrix semi-invariants.  
{\em Advances in Mathematics} {\bf 310} (2017), 44--63. 




\bibitem{FortinReutenauer04}
M. Fortin and C. Reutenauer, 
Commutative/non-commutative rank of linear
matrices and subspaces of matrices of low rank. 
{\em S\'eminaire Lotharingien de Combinatoire} {\bf 52} (2004), B52f.

\bibitem{FujiBook}
S. Fujishige,
{\it Submodular Functions and Optimization, 2nd Edition.} 
Elsevier, Amsterdam, 2005.

\bibitem{FKMTT14}
S. Fujishige, T Kir\'aly,  K. Makino, K. Takazawa, and S. Tanigawa, 
Minimizing Submodular Functions on Diamonds via Generalized Fractional Matroid Matchings.
EGRES Technical Report (TR-2014-14), (2014).
%

\bibitem{GGOW15}
A. Garg, L. Gurvits, R. Oliveira, and A. Wigderson,
Operator scaling: theory and applications. 
{\em Foundations of Computational Mathematics} (2019).





\bibitem{Gratzer}
G. Gr{\"a}tzer, {\it Lattice Theory: Foundation}. Birkh\"auser, Basel, 2011.


\bibitem{Gurvits04}
L. Gurvits, Classical complexity and quantum entanglement. 
{\em Journal of Computer and System Sciences}  {\bf 69} (2004), 448--484. 



\bibitem{HKS17}
T. Haettel, D. Kielak, and P. Schwer, 
The 6-strand braid group is CAT(0). {\em Geometriae Dedicata} {\bf 182} (2016), 263--286.

\bibitem{HamadaHirai}
M. Hamada and H. Hirai, 
Maximum vanishing subspace problem, CAT(0)-space relaxation, and block-triangularization of partitioned matrix. preprint, 2017. 

\bibitem{Hayashi}
K. Hayashi,
A polynomial time algorithm to compute geodesics in {CAT(0)} 
cubical complexes.
{\em Discrete \& Computational Geometry}, to appear.


\bibitem{HH16DM}
H. Hirai,
Computing DM-decomposition of a partitioned matrix with rank-1 blocks.
{\em Linear Algebra and Its Applications} {\bf 547} (2018), 105--123. 


\bibitem{HH16L-convex}
H. Hirai, L-convexity on graph structures. 
{\em Journal of the Operations Research Society of Japan} {\bf 61} (2018), 71--109.

\bibitem{HH_degdet}
H. Hirai, Computing the degree of determinants via discrete convex optimization on Euclidean buildings.
{\em SIAM Journal on Applied Geometry and Algebra} {\bf 3} (2019), 523--557. 

\bibitem{HI_degdet}
H. Hirai and M. Ikeda,
A cost-scaling algorithm for computing the degree of determinants, preprint, 2020. 

\bibitem{HI_2x2}
H. Hirai and Y. Iwamasa, 
A combinatorial algorithm for computing the rank of a generic partitioned matrix with $2 \times 2$ submatrices.
preprint, 2020,
the conference version in IPCO 2020. 

\bibitem{HrubesWigderson2015}
P. Hrube\v s and A. Wigderson, 
Non-commutative arithmetic circuits with division. 
{\em Theory of Computing} {\bf 11} (2015), 357--393.


\bibitem{IQS15a}
G. Ivanyos, Y. Qiao, and  K. V. Subrahmanyam,
Non-commutative Edmonds' problem and matrix semi-invariants.
{\em Computational Complexity} {\bf 26} (2017) 717--763. 

\bibitem{IQS15b}
G. Ivanyos, Y. Qiao, and  K. V. Subrahmanyam,
Constructive noncommutative rank computation in deterministic polynomial time 
over fields of arbitrary characteristics.
{\em Computational Complexity} {\bf 27} (2018), 561--593.



\bibitem{ItoIwataMurota94}
H. Ito, S. Iwata, and K. Murota, 
Block-triangularizations of partitioned matrices under similarity/equivalence transformations. 
{\em SIAM Journal on Matrix Analysis and Applications}
{\bf 15} (1994), 1226--1255. 

\bibitem{IwataMurota95}
S. Iwata and K. Murota, 
A minimax theorem and a Dulmage-Mendelsohn type decomposition for a class of generic partitioned matrices. 
{\em SIAM Journal on Matrix Analysis and Applications} {\bf 16} (1995), 719--734.
%	


\bibitem{Kaliuzhnyi-VerbovetskyiVinnikov2012}
D. S.  Kaliuzhnyi-Verbovetskyi and V. Vinnikov, 
Noncommutative rational functions, their difference-differential calculus and realizations.
{\it Multidimensional Systems and Signal Processing} 
{\bf 23} (2012), 49--77. 


\bibitem{Kuivinen11}
F. Kuivinen,
On the complexity of submodular function minimisation on diamonds.
{\em Discrete Optimization}, {\bf 8} (2011), 459--477.


\bibitem{Lovasz83}
L.  Lov\'asz,
Submodular functions and convexity. 
In A. Bachem, M. Gr\"otschel, and B. Korte (eds.):
{\it Mathematical Programming---The State of the Art} 
(Springer-Verlag, Berlin, 1983), 235--257. 

\bibitem{Lovasz89}
L. Lov\'asz, 
Singular spaces of matrices and their application in combinatorics. 
{\em Boletim da Sociedade Brasileira de Matem\'atica} {\bf 20} (1989), 87--99. 

	
\bibitem{MurotaBook} K. Murota, 
{\em Matrices and Matroids for Systems Analysis.} 
Springer-Verlag, Berlin, 2000.

\bibitem{MurotaDCA}
K. Murota, {\it Discrete Convex Analysis.}
SIAM, Philadelphia, 2004. 

\bibitem{OhtaPalfia15}
S. Ohta and M. P\'alfia,
Discrete-time gradient flows and law of large numbers in
Alexandrov spaces. {\em Calculus of Variations and Partial Differential Equations} {\bf 54}
(2015) 1591--1610.

\bibitem{Oki}
T. Oki,  Computing the maximum degree of minors in skew polynomial matrices. preprint, 2019, the conference version in ICALP 2020.
%

\bibitem{Owen11}
M. Owen, Computing geodesic distances in tree space.
{\em SIAM Journal on Discrete Mathematics} {\bf 25} (2011), 1506--1529.
 
\end{thebibliography}
\end{document}